\documentclass{article}
\usepackage[utf8]{inputenc}
\usepackage[a4paper]{geometry}
\usepackage[dvipsnames]{xcolor}

\usepackage{mathtools}
\usepackage{amssymb}
\usepackage{amsthm}
\usepackage{ dsfont }
\usepackage{dirtytalk}
\usepackage{mathabx}
\usepackage{tikz}
\usepackage{tikz-cd}
\usepackage[all,cmtip]{xy}
\usetikzlibrary{decorations.markings}
\usetikzlibrary{decorations.pathreplacing}
\usetikzlibrary{arrows}
\usetikzlibrary{shapes.misc}
\usetikzlibrary{cd}
\usepackage{hyperref}
\usepackage{faktor}
  \usepackage{stmaryrd}

\newtheorem{lemma}{Lemma}[section]
\newtheorem{corollary}[lemma]{Corollary}
\newtheorem{prop}[lemma]{Proposition}
\newtheorem{thm}[lemma]{Theorem}
\newtheorem*{thm*}{Theorem}
\newtheorem{question}{Question}

\theoremstyle{definition}

\newtheorem{defn}[lemma]{Definition}
\newtheorem*{defn*}{Definition}
\newtheorem*{notation}{Notation}

\newtheorem{rmk}[lemma]{Remark}

\newtheorem{example}[lemma]{Example}

\theoremstyle{remark}

\newcommand{\fontact}{{\mathcal C}}

\def\1{\mathds{1}}

\def\dihed{\mathbb{D}_\infty}

\def\G{G}

\def\cont{\mathrm{cont}}

\def\dhaus{d_{\mathrm{Haus}}}
\def\diam{\mathrm{diam}}
\def\Im{\mathrm{Im}}
\def\Ker{\mathrm{Ker}}

\newcommand*\mc[1]{\mathcal{#1}}
\newcommand*\mf[1]{\mathfrak{#1}}

\newcommand*\gate[2]{\mathfrak{g}_{#1}\left(#2\right)}

\def\nest{\sqsubseteq}

\def\gate{\mf{g}}

\let\temp\epsilon
\let\epsilon\varepsilon
\let\varepsilon\temp

\title{Hierarchical hyperbolicity of hyperbolic-2-decomposable groups}
\author{Bruno Robbio, Davide Spriano}
\date{\today}

\begin{document}

\maketitle

\begin{abstract}
Let \(G\) be a graph of hyperbolic groups with 2-ended edge groups. We show that $G$ is hierarchically
hyperbolic if and only if $G$ has no distorted infinite cyclic subgroup. More precisely, we show that $G$ is hierarchically hyperbolic if and only if $G$
does not contain certain quotients of Baumslag--Solitar groups. As a consequence, we obtain several new results about this class, such as quadratic isoperimetric inequality and finite asymptotic dimension. 
\end{abstract}

\tableofcontents

\section{Introduction}
When trying to understand a complicated group, 
a strategy that often proves to be successful is to reduce it to several, more manageable, groups. This can be achieved in several ways, and for this paper we will consider groups that split as graphs of groups with 2-ended edge groups. For the sake of brevity, if \(P\) is a property of a group, we say that a group is \emph{\(P\)-2-decomposable} if it splits as a graph of groups with 2-ended edge groups and vertex groups satisfying property \(P\).

Considering groups of this form is not a novelty in geometric group theory. An important example is the class of \(\mathbb{Z}\)-2-decomposable groups, also known as \emph{generalized Baumslag--Solitar groups} (GBS groups). Although we will not dive deeply in the theory of GBS groups from a traditional viewpoint, it is worth noting that this class has been extensively studied and shown to be an extremely rich object to analyse from multiple points of view. To name a few, GBS groups have been studied in relation with JSJ decompositions (\cite{ForesterGBS}), quasi-isometries (\cite{RipsSubgroupsSmallCancellation}), automorphisms (\cite{LevittAutGBS}) and  cohomological dimension (\cite{krophollerGBS}). For a general overview of results on GBS groups we refer to the survey by Robinson (\cite{robinsonGBS}). 

This paper focuses on hyperbolic-2-decomposable groups satisfying a technical condition called \emph{balancedness}. A group $G$ is said to be balanced if for every $g\in G$ of infinite order, whenever $hg^ih^{-1}=g^j$ for some $h\in G$ it follows that $|i|=|j|$. The notion of balancedness played an important role in the theory of graphs of groups. In \cite{wise2000subgroup}, the author shows that a free-2-decomposable group is subgroup separable if and only if it is balanced. In \cite{ShepherdWoodhouse:QIrigidity}, the authors extend Wise's result to (virtually-free)-2-decomposable groups, obtaining quasi-isometrical rigidity for certain balanced groups. In \cite{button2015balanced} the author studies the relation between possible acylindrical actions of (torsion-free)-2-decomposable groups in connection with balancedness of such groups.

In our main result (Theorem~\ref{thmi: HHG iff no BS subgroups} below) we show that every balanced hyperbolic-2-decomposable group is a \emph{hierarchically hyperbolic group}. This theorem can be thought of as a hierarchically hyperbolic group version of two corollaries of the Bestvina--Feighn combination theorem that give necessary and sufficient conditions for a hyperbolic-2-decomposable group to be hyperbolic (\cite[Corollary Section 7]{BestvinaFeighnCombination} and \cite[Corollary 2.3]{BestvinaFeighnCombinationAddendum}). 

Before discussing the theory of hierarchically hyperbolic groups, let us point out some consequences of hierarchical hyperbolicity.  
\begin{corollary}\label{theorem: main theorem applications} Let $G$ be a hyperbolic-2-decomposable group. If $G$ is balanced, then
\begin{enumerate}
\item $G$ has finite asymptotic dimension (\cite[Theorem A]{HHSAsdim2015});
\item $G$ is coarse median in the sense of Bowditch (\cite[Definition 2.1]{bowditch2013coarse}) and, therefore, satisfies a quadratic Dehn function (\cite[Corollary 7.5]{HHSII});
\item every top-dimensional quasi-flat in \(G\) is at bounded distance from a union of standard orthants \cite[Theorem~A]{HHSFlats};
\item if $G$ is virtually torsion-free then either $G$ has uniform exponential growth or there exists a space \(E\) such that \(G\) is quasi-isometric to $\mathbb{Z}\times E$ (\cite[Theorem 1.1]{abbott2019exponentialgrowth});
\item \(G\) satisfies the Morse local-to-global property, in particular, given stable subgroups \(H\), \(K\) of \(G\), there are sufficient conditions to ensure \(\langle H, K \rangle \cong H\ast_{H \cap K}K\) (\cite[Theorem G and Theorem 4.20]{russell2019local});
\item a finite family of subgroups \(\{H_i\}\) is hyperbolically embedded in \(G\) if and only if it is a family of almost-malnormal strongly quasiconvex subgroups (\cite[Theorem 8.1]{russellsprianotran:convexity});
\item there is a linear bound on the length of the shortest conjugator for any pair of conjugate Morse elements in $G$ (\cite[Theorem A]{abbott2018conjugator});
\item \(G\) does not contain distorted cyclic subgroups (\cite[Theorem 7.1]{HHSBoundaries} and \cite[Theorem 3.1]{ durham2018corrigendum}).
\end{enumerate}
\end{corollary}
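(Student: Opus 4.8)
The corollary is a formal consequence of Theorem~\ref{thmi: HHG iff no BS subgroups} once that theorem is available, so the plan splits into two stages: first deduce that $G$ is a hierarchically hyperbolic group, and then quote, for each of the eight items, the corresponding statement about hierarchically hyperbolic groups already present in the literature.

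For the first stage I would record the elementary observation that balancedness is inherited by subgroups: if $H\leq G$ and $g\in H$ has infinite order and satisfies $hg^ih^{-1}=g^j$ for some $h\in H$, then the same relation holds in $G$, so $|i|=|j|$. Hence a balanced group contains no unbalanced subgroup. Each of the quotients of Baumslag--Solitar groups that appear in Theorem~\ref{thmi: HHG iff no BS subgroups} is manifestly unbalanced --- by construction it contains an infinite-order element $g$ together with some $h$ with $hg^ih^{-1}=g^j$ and $|i|\neq|j|$ --- so a balanced hyperbolic-2-decomposable group $G$ contains none of them, and Theorem~\ref{thmi: HHG iff no BS subgroups} then gives that $G$ is a hierarchically hyperbolic group. (Note that item (8) is just the other half of that theorem, recovered through \cite[Theorem 7.1]{HHSBoundaries} and \cite[Theorem 3.1]{durham2018corrigendum}, which say that no hierarchically hyperbolic group has a distorted cyclic subgroup.)

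For the second stage, items (1), (3), (5), (6) and (7) are the specialisations to the hierarchically hyperbolic group $G$ of \cite[Theorem A]{HHSAsdim2015}, \cite[Theorem A]{HHSFlats}, \cite[Theorem G and Theorem 4.20]{russell2019local}, \cite[Theorem 8.1]{russellsprianotran:convexity} and \cite[Theorem A]{abbott2018conjugator} respectively, with nothing further to check. For (2) I would insert the one-line remark that a hierarchically hyperbolic space is coarse median of finite rank in the sense of \cite[Definition 2.1]{bowditch2013coarse}, whence the quadratic Dehn function follows from \cite[Corollary 7.5]{HHSII}; and for (4) one only has to notice that the hypothesis that $G$ is virtually torsion-free, required by \cite[Theorem 1.1]{abbott2019exponentialgrowth}, is exactly the hypothesis assumed in that item.

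The only substantial content is Theorem~\ref{thmi: HHG iff no BS subgroups} itself; the expected obstacle in writing the corollary cleanly is the purely bibliographic one of confirming that each cited result is stated for hierarchically hyperbolic groups (or spaces) in the generality needed and carries no hidden extra hypothesis. Here it helps that hierarchically hyperbolic groups are automatically finitely presented, so, for example, the Dehn function statement in (2) is unobstructed.
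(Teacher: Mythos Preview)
Your proposal is correct and follows the same route as the paper: the corollary has no separate proof in the paper precisely because it is the formal consequence you describe --- balancedness rules out non-Euclidean almost Baumslag--Solitar subgroups (this is the one-line implication $4\Rightarrow 3$ in Corollary~\ref{corollary: main result version 2}), the main theorem then gives hierarchical hyperbolicity, and each item is a direct citation. Your write-up simply makes explicit what the paper leaves to the reader.
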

To the best of our knowledge, all the items of Theorem \ref{theorem: main theorem applications} were previously unknown for hyperbolic-2-decomposable groups.

Hierarchically hyperbolic groups (HHGs) were introduced by   Behrstock, Hagen and Sisto in \cite{HHSI} and \cite{HHSII} and have been widely studied since. The key intuition that led to the definition is that several aspects of the machinery developed by Masur and Minsky to study the mapping class groups in \cite{MasurMinskyI,MasurMinskyII} could be generalized to a much larger class of groups. Examples of hierarchically hyperbolic groups are most (conjecturally all) cubical groups (\cite{HagenSusse}), in particular all right-angled Artin and Coxeter groups; groups hyperbolic relative to peripheral hierarchically hyperbolic groups (\cite{HHSII}); many 3-manifold groups (\cite{HHSII}); certain quotients of mapping class groups (\cite{behrstock2020combinatorial, HHSAsdim2015}) and many more. Given the number of interesting examples in the class of  hierarchically hyperbolic groups, it is perhaps surprising that being a hierarchically hyperbolic group has several deep consequences, as Corollary \ref{theorem: main theorem applications} suggests. As the full definition of hierarchically hyperbolic group is quite technical,  we postpone it until Section~\ref{subsection: basics of hhs}. For the time being, it is enough to know that a hierarchically hyperbolic group structure $(G,\mathfrak{S})$ on \(G\) consists of a collection of $\delta$--hyperbolic spaces 
$\{\mathcal{C}V\mid V\in\mathfrak{S}\}$, and projections $\pi_V$ from $\mathrm{Cay}(G)$ onto the various hyperbolic spaces $\mathcal{C}V$, satisfying a number of axioms.

A property of hierarchically hyperbolic groups that plays a key role in this paper is the fact that they do not have distorted cyclic subgroups. As a first application, this property provides a source of examples of groups that are not hierarchically hyperbolic. The standard example of groups with distorted cyclic subgroups are Baumslag--Solitar groups $BS(m,n)=\langle a,t\mid ta^nt^{-1}=a^m\rangle$ with \(\vert m \vert \neq \vert n \vert\), often called \emph{non-Euclidean} Baumslag--Solitar subgroups. In particular, if a group has a non-Euclidean Baumslag--Solitar subgroup, then the group cannot be hierarchically hyperbolic. When considering hyperbolic-2-decomposable groups, this is a clear  obstruction to hierarchical hyperbolicity. The main result of the paper is that, up to some issues with torsion, this is essentially the only obstruction. More precisely, we say that a \emph{non-Euclidean almost Baumslag--Solitar group} is a group generated by two infinite order elements \(a,b\) such that we have \(ba^{m}b^{-1} = a^{n}\) for some \(\vert m \vert \neq \vert n \vert\).  Equivalently, a non-Euclidean almost Baumslag--Solitar group is a quotient of a non-Euclidean Baumslag--Solitar group where the image of the element \(a\) has infinite order (see Definition~\ref{def: almost bs groups} and successive remarks).

Then we have the following theorem, which is a shortened version of Corollary~\ref{corollary: main result version 2}.

\begin{thm}\label{thmi: HHG iff no BS subgroups}
Let $G$ be a hyperbolic-2-decomposable group. The following are equivalent.\begin{enumerate}
    \item $G$ admits a hierarchically hyperbolic group structure.
    \item $G$ does not contain a distorted infinite cyclic subgroup.
    \item $G$ does not contain a non-Euclidean almost Baumslag--Solitar group.
\end{enumerate}
Moreover, if $G$ is virtually torsion-free, condition (3) can be replaced by 
\begin{enumerate}
    \item[3'.] $G$ does not contain a non-Euclidean Baumslag--Solitar group. 
\end{enumerate}
\end{thm}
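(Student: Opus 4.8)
The plan is to establish the cycle $(1) \Rightarrow (2) \Rightarrow (3) \Rightarrow (1)$, with the last implication carrying essentially all the difficulty. The implication $(1) \Rightarrow (2)$ is immediate from the cited fact that hierarchically hyperbolic groups contain no distorted infinite cyclic subgroups (item (8) of Corollary~\ref{theorem: main theorem applications}, i.e.\ \cite[Theorem 7.1]{HHSBoundaries} together with \cite[Theorem 3.1]{durham2018corrigendum}). For $(2) \Rightarrow (3)$ I would argue contrapositively: if $G$ contains a non-Euclidean almost Baumslag--Solitar group, generated by infinite-order elements $a,b$ with $b a^m b^{-1} = a^n$ and $|m| \neq |n|$, then (after possibly swapping $a$ for $a^{-1}$ and $b$ for $b^{-1}$ to arrange $|n| > |m|$) conjugation by powers of $b$ repeatedly multiplies/divides the ``length'' of powers of $a$, so that $b^k a^{m^k} b^{-k} = a^{n^k}$ exhibits $\langle a \rangle$ as exponentially distorted inside $\langle a, b\rangle$, hence inside $G$. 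This is the standard Baumslag--Solitar distortion computation and only needs that $a$ has infinite order so that the powers $a^{n^k}$ are genuinely distinct.

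The real content is $(3) \Rightarrow (1)$: assuming $G$ contains no non-Euclidean almost Baumslag--Solitar subgroup, I would construct a hierarchically hyperbolic group structure $(G, \mathfrak{S})$. The natural starting point is the Bass--Serre tree $T$ of the splitting of $G$ as a graph of hyperbolic groups with $2$-ended edge groups; the absence of non-Euclidean almost Baumslag--Solitar subgroups should translate, via an analysis of how a conjugate of an edge group sits inside an adjacent vertex group, precisely into a ``balancedness/acylindricity'' statement for the action of $G$ on $T$ (this is where one expects to recover the hypothesis in the form used by Bestvina--Feighn-type combination arguments). I would then build the index set $\mathfrak{S}$ from: a domain for $G$ itself whose associated hyperbolic space records the coarse geometry transverse to the edge spaces (morally a Bass--Serre-tree-like or ``augmented'' space, hyperbolic by a combination-theorem argument once the bad subgroups are excluded); nested domains corresponding to the vertex groups, each equipped with the hierarchically hyperbolic structure they carry as hyperbolic groups (a single hyperbolic space suffices); and domains for the edge groups, which are $2$-ended hence virtually $\mathbb{Z}$, contributing lines. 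The orthogonality, nesting, and containers would be arranged so that distinct vertex-group domains meeting along a common edge-group domain interact through that line; the key will be verifying the consistency (Behrstock inequality), bounded geodesic image, and large links axioms, which is exactly where control over edge groups inside vertex groups — and hence the no-distortion hypothesis — is used.

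I expect the main obstacle to be twofold. First, proving hyperbolicity of the ``transverse'' space for $G$: one needs an effective version of the Bestvina--Feighn combination theorem (annuli flare / hallways flare) valid under the almost-Baumslag--Solitar exclusion rather than under a genuine hyperbolicity hypothesis on $G$, and torsion in the $2$-ended edge groups forces one to work with the ``almost'' variant of the Baumslag--Solitar condition throughout — this is presumably why condition (3) (rather than $(3')$) is the correct hypothesis in general, with $(3')$ recovered only in the virtually-torsion-free case by a standard transfer argument passing to a finite-index torsion-free subgroup and checking that the HHG property descends and lifts appropriately. Second, checking the full list of HHG axioms, and in particular the uniqueness and realization axioms, for the assembled structure; the delicate point is the interaction of several vertex-group domains glued along edge-group lines, where one must show that tuples of coordinates satisfying the consistency inequalities are uniformly close to the image of an actual group element. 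I would handle this by a careful local analysis on the tree $T$, reducing statements about $G$ to statements about a bounded neighborhood of an edge in $T$, where everything is governed by a single vertex group (hyperbolic, hence HHG) and the quasiconvexly embedded edge group.
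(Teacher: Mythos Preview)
Your handling of $(1)\Rightarrow(2)$ and $(2)\Rightarrow(3)$ is fine and matches the paper's reasoning.

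For $(3)\Rightarrow(1)$, however, your sketch misses the central difficulty and proposes machinery that does not address it. You plan to take a domain for each edge group (a line) and a domain for each vertex group (the hyperbolic group itself), glued along the lines. The problem is that the images of the edge groups in a vertex group $G_v$ need \emph{not} form an almost-malnormal collection: two incoming edge groups may be commensurable in $G_v$, or a single edge group may intersect its own conjugates in infinite subgroups. In that situation one cannot simply assign independent line domains to the edges, because the HHG axioms (notably consistency and bounded geodesic image) force commensurable edge groups to share a common domain, and this compatibility requirement then propagates through the entire graph of groups. Your outline says nothing about this; ``arrange orthogonality, nesting, and containers'' hides exactly the issue that carries the whole proof.

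The paper's route is quite different from yours. Rather than building an HHG structure from scratch and checking axioms, it (i) groups the edge groups into commensurability classes and, for each class, assembles a \emph{conjugacy graph}: an auxiliary graph of $2$-ended groups recording how the edge groups in that class are conjugated to one another through the various vertex groups; (ii) shows that the absence of non-Euclidean almost Baumslag--Solitar subgroups is equivalent to each conjugacy graph being \emph{linearly parametrizable}, i.e.\ admitting a homomorphism to $\mathbb{D}_\infty$ that is a quasi-isometry on every vertex and edge group; (iii) pulls back the trivial HHG structure on $\mathbb{D}_\infty$ along this homomorphism to get mutually compatible HHG structures on all edge groups and on the elementary closures of their images in the vertex groups; (iv) observes that each vertex group is hyperbolic relative to the (now almost-malnormal) collection of elementary closures, and invokes the relatively hyperbolic HHG construction of \cite{HHSII} to get a structure on each $G_v$ in which the edge maps are glueing hieromorphisms; and finally (v) applies the existing combination theorem of \cite{berlai2018refined} rather than verifying the axioms by hand. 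In particular, no Bestvina--Feighn flaring argument is used, and no ``top-level transverse hyperbolic space'' for $G$ is ever constructed directly: hyperbolicity enters only at the level of the individual vertex groups.
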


We believe that Item (3') of Theorem \ref{thmi: HHG iff no BS subgroups} should be true even without the assumption of $G$ being virtually torsion-free. See the Questions section for further discussion.
 
Theorem \ref{thmi: HHG iff no BS subgroups} is a combination theorem for hierarchically hyperbolic groups: if edge  and vertex groups of a graph of groups satisfy certain conditions, then the (fundamental group of the) graph of groups is hierarchically hyperbolic. Combination theorems for hierarchically hyperbolic groups are not new, and indeed Theorem \ref{thmi: HHG iff no BS subgroups} relies on a combination theorem for hierarchically hyperbolic groups (\cite[Theorem C]{berlai2018refined}). However, there are important differences with many other results of the same form. Firstly, Theorem \ref{thmi: HHG iff no BS subgroups} can be used as a black-box. The statement of the theorem is elementary and does not require familiarity with hierarchically hyperbolic groups to be understood. 
Secondly, Theorem \ref{thmi: HHG iff no BS subgroups} imposes no condition of geometric nature on the edge embeddings, such as, say, the images of the edge groups in the vertex groups to form an \emph{almost-malnormal} collection (Definition \ref{def: almost malnormal collection}). 

Unlike Theorem \ref{thmi: HHG iff no BS subgroups}, several important results in the literature require almost-malnormality of edge groups. For instance,
the groundbreaking work of Haglund--Wise and Hsu--Wise (\cite{HaglundWise:ACombination, HsuWise:Cubulating}), which is crucial in Agol's proof of the virtual Haken conjecture (\cite{AgolHaken}), provides a combination theorem for virtually compact special groups where one of the key condition is the almost-malnormality of the edge groups in the vertex groups. 

\medskip

\textbf{Detecting almost Baumslag--Solitar subgroups:} 
In general, checking whether a given graph of groups contains an almost Baumslag--Solitar subgroup may be challenging. For this reason, we introduce the notion of \emph{balanced edges}. An edge \(e\) of a graph of groups \(\mc{G}\) is a balanced edge if for every infinite order element \(g \in G_e\) and \(h \in \pi_1(\mc{G}-e)\)
\[\text{ if }hg^ih^{-1}=g^j \text{ then }|i|=|j|.\]

We then have the following criterion to detect almost Baumslag--Solitar subgroups.

\begin{thm}\label{thm: non-euclidean BS iff unbalanced edge intro section}
Let \(\mc{G}\) be a graph of groups where none of the vertex groups contain distorted cyclic subgroups. Then \(\pi_1(\mc{G})\) contains a non-Euclidean almost Baumslag--Solitar subgroup if and only if \(\mc{G}\) has an unbalanced edge.
\end{thm}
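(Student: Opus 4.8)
The plan is to treat the two implications separately, the real content being the passage from a non-Euclidean almost Baumslag--Solitar subgroup to an unbalanced edge; for this I would use the action of $\pi_1(\mc{G})$ on the Bass--Serre tree $T$ of $\mc{G}$.

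The implication from ``$\mc{G}$ has an unbalanced edge'' to ``$\pi_1(\mc{G})$ contains a non-Euclidean almost Baumslag--Solitar subgroup'' is easy and does not use the hypothesis on vertex groups. Given an unbalanced edge $e$, cut $\mc{G}$ along $e$ so as to write $\pi_1(\mc{G})$ as an HNN extension over $G_e$ with base $\pi_1(\mc{G}-e)$ when $e$ is non-separating, or as an amalgamated product over $G_e$ when $e$ is separating. Unwinding the definition of unbalancedness -- and, in the HNN case, absorbing the stable letter to bring the witnessing conjugator from $\pi_1(\mc{G}-e)$ into $\pi_1(\mc{G})$ -- yields an infinite-order $g\in\pi_1(\mc{G})$ lying in a conjugate of $G_e$ together with $h\in\pi_1(\mc{G})$ such that $hg^ih^{-1}=g^j$ with $|i|\neq|j|$. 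Such an $h$ is automatically of infinite order: iterating the relation gives $h^kg^{i^k}h^{-k}=g^{j^k}$ for every $k\geq1$, so if $h^k=1$ then $g^{i^k}=g^{j^k}$ and, $g$ having infinite order, $|i|=|j|$. Hence $\langle g,h\rangle$ is a non-Euclidean almost Baumslag--Solitar subgroup of $\pi_1(\mc{G})$.

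For the converse, let $K=\langle a,b\rangle\leq\pi_1(\mc{G})$ with $ba^mb^{-1}=a^n$, both $a$ and $b$ of infinite order, and, after replacing $b$ by $b^{-1}$ if needed (which interchanges $m$ and $n$), $1\leq|m|<|n|$. Two facts about the action on $T$. First, $a$ acts elliptically: translation length on a tree is a conjugacy invariant satisfying $\ell(g^k)=|k|\,\ell(g)$, so $|n|\,\ell(a)=\ell(a^n)=\ell(ba^mb^{-1})=\ell(a^m)=|m|\,\ell(a)$, forcing $\ell(a)=0$. Second, $K$ fixes no point of $T$: inducting on the relation gives $b^k a^{m^k}b^{-k}=a^{n^k}$, so $\langle a\rangle$ is distorted in $K$ and hence in every group containing $K$; a global fixed point of $K$ in $T$ would place $K$ in a conjugate of a vertex group -- or of an edge group, which embeds in a vertex group -- which would then contain a distorted infinite cyclic subgroup, contradicting the hypothesis. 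Now, since $a$ is elliptic, $F:=\mathrm{Fix}(a)$ is a nonempty subtree and $\mathrm{Fix}(a^n)=\mathrm{Fix}(ba^mb^{-1})=b\cdot\mathrm{Fix}(a^m)\supseteq F\cup bF$. If $F$ contains an edge $\tilde e$, set $g:=a$. Otherwise $F=\{v\}$ is a single vertex; then $bv\neq v$ (else $K$ fixes $v$), so the nondegenerate geodesic $[v,bv]$ lies inside the subtree $\mathrm{Fix}(a^n)$, and picking any edge $\tilde e$ of it and setting $g:=a^n$ we get $g\in\Stab(\tilde e)$ and $bg^mb^{-1}=ba^{nm}b^{-1}=(ba^mb^{-1})^n=(a^n)^n=g^n$. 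Either way there are an edge $\tilde e$ of $T$, an infinite-order $g\in\Stab(\tilde e)$, and an element -- namely $b$ -- conjugating $g^m$ to $g^n$ with $|m|\neq|n|$; writing $e$ for the image of $\tilde e$ in $\mc{G}$, the element $g$ lies in a conjugate of $G_e$ and is conjugate in $\pi_1(\mc{G})$ to a power of itself of different absolute value.

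The last step is to upgrade this to the assertion that $e$ -- or some edge of $\mc{G}$ -- is unbalanced, and this is where I expect the main difficulty. The route I would take is first to prove that an edge $e$ is unbalanced precisely when some infinite-order element of a conjugate of $G_e$ is conjugate in $\pi_1(\mc{G})$ to a power of itself of different absolute value: one passes to the one-edge splitting of $\pi_1(\mc{G})$ along $e$, considers the two edges $\tilde e$ and $b\tilde e$ of the associated tree -- both fixed by $g^n$, while the degenerate case $b\tilde e=\tilde e$ is excluded since it would put the non-Euclidean almost Baumslag--Solitar group $\langle g,b\rangle$ inside a conjugate of $G_e$, hence of a vertex group -- and then applies Britton's lemma and the conjugacy criteria for amalgamated products and HNN extensions to the relation $bg^mb^{-1}=g^n$. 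The delicate points are to keep the witnessing conjugator in $\pi_1(\mc{G}-e)$ modulo the stable letter, to handle separating and non-separating edges uniformly, and -- if the relation extracted from the tree does not localise at $e$ itself -- to descend, via an induction on the number of edges of $\mc{G}$, to an unbalanced edge of $\pi_1(\mc{G}-e)$, which is itself the fundamental group of a graph of groups with fewer edges and with vertex groups among those of $\mc{G}$, and whose unbalanced edges are still unbalanced edges of $\mc{G}$.
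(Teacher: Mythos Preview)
Your treatment of the easy direction (unbalanced edge $\Rightarrow$ non-Euclidean almost Baumslag--Solitar subgroup) is correct and matches the paper's Corollary~\ref{cor: unbalanced implies almost BS} essentially verbatim, including the observation that the conjugator must have infinite order. One minor point: a separating edge is automatically balanced (the two images $\phi_+(a)$, $\phi_-(a)$ land in different free factors of $\pi_1(\mc{G}-e)$ and cannot be conjugate there), so only the HNN case actually arises.

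For the hard direction, your approach is genuinely different from the paper's and introduces an unnecessary detour. The paper argues in one line: a non-Euclidean almost Baumslag--Solitar subgroup makes $\pi_1(\mc{G})$ unbalanced, and then Lemma~\ref{Lem: unbalanced group iff unbalanced edge} (a graph of balanced vertex groups has unbalanced fundamental group iff it has an unbalanced edge) gives the conclusion. The hypothesis that vertex groups have no distorted cyclic subgroups is used only to ensure they are balanced. All the real work is pushed into that lemma, whose proof is an induction on the edges outside a spanning tree together with the normal-form analysis of Proposition~\ref{prop: balance and absence of conjugation} and Corollary~\ref{corollary: HNN and balance}.

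Your Bass--Serre tree argument (ellipticity of $a$, non-fixation by $K$, and the pleasant trick of passing to $a^n$ to obtain an edge in the fixed set) is correct and produces an infinite-order element $g$ in a conjugate of some $G_e$ with $bg^mb^{-1}=g^n$, $|m|\neq|n|$. But this is strictly more than the paper needs: it only needs that $\pi_1(\mc{G})$ is unbalanced, which is immediate from the relation $ba^mb^{-1}=a^n$ without locating $a$ in any stabilizer. Worse, your extra localization does not actually shorten the remaining step. Your final paragraph is only a sketch: to go from ``some conjugate of $G_e$ contains a $\pi_1(\mc{G})$-unbalanced element'' to ``$e$, or some edge, is unbalanced in the sense of Definition~\ref{def: balanced edges}'' you still need to analyse the conjugator $b$ via Britton's lemma in the one-edge splitting along $e$, and either extract a witness in $\pi_1(\mc{G}-e)$ or push the unbalancedness down to $\pi_1(\mc{G}-e)$ and induct. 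Carrying this out carefully is precisely the content of Corollary~\ref{corollary: HNN and balance} and Lemma~\ref{Lem: unbalanced group iff unbalanced edge}; you have named the ingredients but not done the work. So your proposal is not wrong, but the portion you flag as ``the main difficulty'' is indeed the entire difficulty, and completing it amounts to reproving the paper's key lemma by an equivalent route.
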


The proof of this result can be found in Theorem \ref{thm: non-euclidean BS iff unbalanced edge}.

The absence of unbalanced edges is a significantly weaker condition than almost-malnormality of edge groups. Indeed, whenever the underlying graph of \(\mc{G}\) is a tree, all the edges will be automatically balanced (Remark \ref{remark: edges in tree are balanced}), even if the edge groups do not form almost-malnormal collections.  
In particular, we conclude that if \(G = H_1 \ast_C H_2\) where \(H_i\) are hyperbolic and \(C\) is 2-ended, then \(G\) does not contain non-Euclidean almost Baumslag--Solitar subgroups. As a consequence, we have the following. \begin{corollary}\label{corol: corollary to mainT intro version}
Let \(G = H_1 \ast_C H_2\) where \(H_i\) are hyperbolic and \(C\) is 2-ended. Then \(G\) is a hierarchically hyperbolic group. 
\end{corollary}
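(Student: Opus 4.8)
The plan is to deduce the statement directly from Theorem~\ref{thmi: HHG iff no BS subgroups}, using Theorem~\ref{thm: non-euclidean BS iff unbalanced edge intro section} to rule out the obstruction. First I would set up the bookkeeping: the amalgam $G = H_1 \ast_C H_2$ is the fundamental group of a graph of groups $\mc{G}$ whose underlying graph is a single edge $e$ joining two vertices, with vertex groups $H_1$ and $H_2$ and edge group $C$. Since $C$ is $2$-ended, $G$ is hyperbolic-$2$-decomposable, so Theorem~\ref{thmi: HHG iff no BS subgroups} applies; and since a single edge is a tree, the underlying graph of $\mc{G}$ is a tree.

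Next I would verify the hypothesis of Theorem~\ref{thm: non-euclidean BS iff unbalanced edge intro section}, namely that neither $H_1$ nor $H_2$ contains a distorted infinite cyclic subgroup: this is the standard fact that an infinite-order element of a hyperbolic group has positive stable translation length, so the cyclic subgroup it generates is quasi-isometrically embedded. With this in hand, Theorem~\ref{thm: non-euclidean BS iff unbalanced edge intro section} says that $G = \pi_1(\mc{G})$ contains a non-Euclidean almost Baumslag--Solitar subgroup if and only if $\mc{G}$ has an unbalanced edge. Since the underlying graph is a tree, Remark~\ref{remark: edges in tree are balanced} guarantees that $e$ is balanced (indeed, removing $e$ disconnects the graph, so no conjugation in the relevant factor relates powers of an infinite-order element of $C$ with unequal exponents; alternatively each $H_i$ is hyperbolic, hence balanced). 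Hence $\mc{G}$ has no unbalanced edge, so $G$ contains no non-Euclidean almost Baumslag--Solitar subgroup; that is, condition (3) of Theorem~\ref{thmi: HHG iff no BS subgroups} holds. The implication (3)$\Rightarrow$(1) of that theorem then yields that $G$ admits a hierarchically hyperbolic group structure.

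There is no real difficulty here: the corollary is an immediate consequence of the two main theorems together with the elementary observation that hyperbolic groups are balanced and have undistorted cyclic subgroups. The only point that deserves a sentence of care is matching the amalgam $H_1 \ast_C H_2$ with the graph-of-groups formalism and noting that its underlying graph is a tree, which is exactly the hypothesis needed to invoke Remark~\ref{remark: edges in tree are balanced}.
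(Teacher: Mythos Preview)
Your proof is correct. It matches essentially the sketch given in the introduction of the paper: since the underlying graph is a tree, Remark~\ref{remark: edges in tree are balanced} rules out unbalanced edges, Theorem~\ref{thm: non-euclidean BS iff unbalanced edge intro section} then rules out non-Euclidean almost Baumslag--Solitar subgroups, and Theorem~\ref{thmi: HHG iff no BS subgroups} finishes.

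The paper's formal proof (Corollary~\ref{corol: corollary to mainT}) takes a marginally different but equivalent route: it invokes Lemma~\ref{lemma: amalgam and balance} directly to conclude that $G$ is balanced, and then applies the equivalence $(4)\Leftrightarrow(1)$ of Corollary~\ref{corollary: main result version 2}. The content is the same, since both arguments ultimately rest on the fact that an amalgam of balanced groups over a $2$-ended subgroup is balanced; you pass through condition~(3) of the main equivalence while the paper passes through condition~(4). Neither approach has any real advantage over the other.
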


This result is proved in Corollary \ref{corol: corollary to mainT}.

Let us discuss briefly the main aspects of the proof of Theorem \ref{thmi: HHG iff no BS subgroups}.

\textbf{Idea of the proofs:} Ultimately, our goal is to show that the combination theorem for hierarchically hyperbolic groups (\cite{berlai2018refined}, see \ref{comb_thm_ver2}) can be applied to hyperbolic-2-decomposable groups with no non-Euclidean almost Baumslag--Solitar subgroups. However, verifying the hypotheses of the combination theorem present several challenges. Firstly, we need to construct hierarchically hyperbolic group structures on all vertex and edge groups, and then verify that such structures satisfy the hypotheses of the combination theorem. The key hypothesis to check here is that the embedding of the edge groups into the vertex groups are \emph{glueing hieromorphisms}. Avoiding the technicalities (discussed in  Definition \ref{def: glueing_hieromorphism}), given hierarchically hyperbolic groups $(G,\mathfrak{S}),(G',\mathfrak{S}')$ a necessary condition to the existence of a glueing hieromorphism is that  $\mathfrak{S}\subseteq\mathfrak{S}'$. That is to say, the set of hyperbolic spaces forming the hierarchical structure of \(G\) is a subset of the hierarchical structure of \(G'\). This is one of the key difficulties in equipping the edge and vertex groups with hierarchically hyperbolic structure: the structure of every edge needs to be a subset of the structure of both the vertices it is incident to. However, since we do not require almost-malnormality, the structures of two edge groups incident to the same vertex group may interact with each other, which in turn influences the structures of the other vertices adjacent to such edges and so on.  

To overcome this issue, we find hierarchically hyperbolic structures on the edge groups that are compatible whenever the edge groups are not almost-malnormal. This is done by fixing a reference group, in this case the dihedral group, and pulling back the same hierarchical structure to all the various edge groups. The key notion used is the one of \emph{linearly parametrizable graph of  groups}. A graph of groups \(\mc{G}\) is linearly parametrizable if there is a homomorphism \(\Phi \colon \pi_1(\mc{G}) \to \dihed\) such that \(\Phi\vert_{G_u}\colon G_u \to \dihed\) is a quasi-isometry for each vertex or edge group \(G_u\). Similarly a group \(G\) is linearly parametrizable if \(G = \pi_1(\mc{G})\) for some linearly parametrizable graph of groups \(\mc{G}\). Note that, by definition, a linearly parametrizable group is (2-ended)-2-decomposable. Perhaps the reader will not be surprised that balancedness is the key for the converse to hold. This result is a consequence of Theorem \ref{thm: balanced edges and BS version 2} 
\begin{thm}
Let \(G\) be a group. Then \(G\) is linearly parametrizable if and only if \(G\) is (2-ended)-2-decomposable and balanced. 
\end{thm}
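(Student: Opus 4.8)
The plan is to prove the two implications separately. One half of the equivalence — a linearly parametrizable group is (2-ended)-2-decomposable — is immediate from the definitions, since the vertex and edge groups of the witnessing splitting are quasi-isometric to $\dihed$, hence $2$-ended; so the real content is (a) that a linearly parametrizable group is balanced, and (b) that a balanced (2-ended)-2-decomposable group is linearly parametrizable. For (a): let $\Phi\colon G\to\dihed$ witness the parametrization and suppose $g$ has infinite order with $hg^ih^{-1}=g^j$. If $\Phi(g)$ has infinite order it is a nontrivial translation of $\dihed$, and since conjugation in $\dihed$ can only fix or invert a translation, $\Phi(h)\Phi(g)^i\Phi(h)^{-1}=\Phi(g)^j$ forces $j=\pm i$. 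If $\Phi(g)$ has finite order $N$, then $g^N$ has infinite order and lies in $\Ker\Phi$; since $\Phi$ restricts to a (proper) quasi-isometry on every vertex and edge group, $\Ker\Phi$ meets every conjugate of a vertex or edge group in a finite subgroup, so $\Ker\Phi$ — and in particular $g^N$ — acts on the Bass--Serre tree $T$ of the splitting with all point-stabilizers finite. Hence $g^N$ is a hyperbolic isometry of $T$, and conjugacy-invariance of translation length applied to $hg^{Ni}h^{-1}=g^{Nj}$ gives $|i|=|j|$.

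For (b), let $\mc{G}$ be a splitting of $G$ with $2$-ended vertex and edge groups; the goal is to build $\Phi\colon G=\pi_1(\mc{G})\to\dihed$ restricting to a quasi-isometry on each vertex and edge group, so that $\mc{G}$ itself witnesses linear parametrizability. By the Bass--Serre presentation this is the data of a homomorphism $\phi_v\colon G_v\to\dihed$ for each vertex $v$ and an element $s_e\in\dihed$ for each edge $e$ (trivial along a fixed spanning tree $\mathcal{T}$) with $s_e\,\phi_{o(e)}(\iota_e(c))\,s_e^{-1}=\phi_{t(e)}(\tau_e(c))$ for every edge $e$ and every $c\in G_e$, where $\iota_e,\tau_e$ are the two edge inclusions; and $\Phi|_{G_u}$ is a quasi-isometry precisely when $\phi_u$ (resp.\ $\phi_{o(e)}\circ\iota_e$) has finite kernel. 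Two structural facts organize the construction. First, any finite-kernel homomorphism from a $2$-ended group $Q$ to $\dihed$ kills the maximal finite normal subgroup $E(Q)$ — its image is a $2$-ended subgroup of $\dihed$, and those have trivial maximal finite normal subgroup — hence factors through $Q/E(Q)\in\{\ZZ,\dihed\}$ and is determined by a positive integer ``scale'' and, in the dihedral case, a ``reflection offset'' in $\ZZ/2$. Second, each edge inclusion descends to an embedding $G_e/E(G_e)\hookrightarrow G_{o(e)}/E(G_{o(e)})$ with a well-defined stretch factor $m^{\iota}_e\in\ZZ_{>0}$ (and an offset), and likewise $m^{\tau}_e$ on the other side. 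Writing $k_v$ for the scale of $\phi_v$, the compatibility at $e$ reduces to the \emph{scale equation} $k_{o(e)}\,m^{\iota}_e=k_{t(e)}\,m^{\tau}_e$ together with a congruence mod $2$ on the induced offsets.

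The heart of the argument is that the scale equations have a common solution — equivalently, $\prod_i m^{\iota}_{e_i}/m^{\tau}_{e_i}=1$ for every loop $e_1,\dots,e_r$ in the underlying graph — and this is the one point where balancedness is used. Fixing any infinite-order $c_i\in G_{e_i}$, the images of consecutive $c_i$ into the vertex group they share are commensurable there, say $\tau_{e_i}(c_i)^{p_i}=\iota_{e_{i+1}}(c_{i+1})^{\pm q_i}$ generates the intersection of the two cyclic subgroups; one checks that $\prod_i p_i/q_i$ does not depend on the chosen $c_i$ and equals $\prod_i m^{\iota}_{e_i}/m^{\tau}_{e_i}$. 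Chaining these commensurabilities all the way around the loop and conjugating by the stable letter $t$ of the edge of the loop not in $\mathcal{T}$ produces, after raising to a suitable power, a relation $t\,d^{\,a}\,t^{-1}=d^{\,b}$ with $d$ of infinite order and $|a|/|b|=\prod_i p_i/q_i$; balancedness then forces $|a|=|b|$, so the product is $1$. (This is the loop version of Theorem~\ref{thm: non-euclidean BS iff unbalanced edge intro section}.) With the scale equations solvable over $\mathbb{Q}_{>0}$, I would clear denominators and double every scale, producing a solution in even positive integers; with all scales even the mod-$2$ offset congruences become vacuous, since a reflection conjugated by an even power of the translation stays in its own conjugacy class. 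Then propagate the $\phi_v$ along $\mathcal{T}$: at each tree edge $\phi_{t(e)}$ is the essentially unique extension of $\phi_{o(e)}\circ\iota_e$ to $G_{t(e)}$, which exists because the scale equation supplies the divisibility $m^{\tau}_e\mid k_{o(e)}m^{\iota}_e$; finally choose the conjugators $s_e$ at the non-tree edges, where equality of scales and evenness of the offsets make the two homomorphisms conjugate in $\dihed$. Each $\phi_v$ has finite kernel, so $\Phi|_{G_u}$ is a quasi-isometry on every vertex and edge group, as required.

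I expect the main obstacle to be this loop computation: organizing the chain of commensurabilities around an arbitrary loop together with the conjugation by the stable letter so that a single invocation of balancedness yields $\prod_i m^{\iota}_{e_i}/m^{\tau}_{e_i}=1$. This is the conceptual core and the only step that genuinely uses the hypothesis. The remaining work — tracking maximal finite normal subgroups and reflection offsets, and extending homomorphisms of $2$-ended groups into $\dihed$ — is technical but routine, with the even-rescaling trick precisely what is needed to dispose of the torsion-related bookkeeping.
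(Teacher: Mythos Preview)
Your proposal is correct and differs from the paper mainly in organization rather than content.

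For the forward direction (linearly parametrizable implies balanced), your argument is more direct than the paper's. The paper passes through the intermediate notion of a \emph{balanced edge}: it shows first (Corollary~\ref{coro: linearly parametr iff all edges balanced}, via Lemma~\ref{lem: parametrizing HNN}) that linear parametrizability forces every edge to be balanced, and then (Lemma~\ref{Lem: unbalanced group iff unbalanced edge}, resting in turn on Lemma~\ref{lemma: amalgam and balance} and Corollary~\ref{corollary: HNN and balance}) that a graph of balanced groups with only balanced edges has balanced fundamental group. Your two-case argument --- using $\Phi$ directly when $\Phi(g)$ is a translation, and translation length on the Bass--Serre tree when $\Phi(g)$ is torsion --- bypasses this chain entirely.

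For the converse, the paper builds the parametrization inductively: first along a spanning tree via Lemma~\ref{lemma: amalgam of linearly parametrized}, then across each remaining edge one at a time via Lemma~\ref{lem: parametrizing HNN}, invoking balancedness of $\pi_1(\mc{G})$ only through Lemma~\ref{Lem: unbalanced group iff unbalanced edge} to convert it into balancedness of each individual edge. Your global scale-equation formulation is the same content repackaged: your loop cocycle computation is the inductive step of Lemma~\ref{lem: parametrizing HNN} carried out on the fundamental cycle of each non-tree edge, and your even-rescaling trick plays the role of the paper's postcomposition with the endomorphisms $\tau_k$ and $\rho_l$ of $\dihed$. The paper's modular approach pays dividends later --- the edge-by-edge lemmas and the balanced-edge notion are reused throughout Section~\ref{section: graph of word hyperbolic groups} --- whereas your packaging is tighter if this theorem is the only goal.
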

As hinted before, linearly parametrizable graph of groups satisfy the hypotheses of the combination theorem for hierarchically hyperbolic groups, yielding a version of Theorem \ref{thmi: HHG iff no BS subgroups} that holds for (2-ended)-2-decomposable groups.

To  extend such a result to hyperbolic-2-decomposable groups, we introduce the concept of \emph{conjugacy graph} (Definition~\ref{definition: conjugacy graph}). The conjugacy graph is a graph of groups associated to each commensurability class of edge groups. If the group \(\pi_1(\mc{G})\) is balanced, then all the conjugacy graphs are linearly parametrized. Then, using a relative hyperbolicity argument, we construct hierarchically hyperbolic structures on the vertex groups that are compatible with the various conjugacy graphs (Theorem \ref{theorem: readaptation of relative HHG}).

\subsection{Questions}
\textbf{The non virtually torsion-free case:}
our results are stated differently for the case of virtually torsion-free groups. The main problem being that we could not determine in the class of hyperbolic-2-decomposable groups whether all non-Euclidean almost Baumslag--Solitar groups contain a Baumslag--Solitar subgroup.
\begin{question}Does every non-Euclidean almost Baumslag--Solitar subgroup of a hyperbolic-2-decomposable group contain a non-Euclidean Baumslag--Solitar subgroup? 
\end{question}
We stress that this question has a positive answer for certain torsion-free groups. In \cite[Proposition~7.5]{levitt2015quotients} the author shows that the question has a positive answer for GBS groups. In \cite[Proposition 9.6]{button2015balanced} the author extends the result to (torsion-free hyperbolic)-2-decomposable groups. However, the results appearing in those papers rely heavily on the absence of torsion. As we will see in Section \ref{section: graph of infinite virtually cyclic groups}, it is enough to assume that $G$ is virtually torsion-free. It is perhaps also worth noting that a graph of virtually torsion-free groups may not have a virtually torsion-free fundamental group, even when the edge groups are assumed to be of finite index in its neighbouring vertex groups. This is illustrated, for instance, in the examples appearing in these \href{https://mathoverflow.net/questions/330632/is-an-hnn-extension-of-a-virtually-torsion-free-group-virtually-torsion-free}{mathoverflow replies\footnote{https://mathoverflow.net/questions/330632/is-an-hnn-extension-of-a-virtually-torsion-free-group-virtually-torsion-free}.}

\medskip

\textbf{Generalization to HHG-2-decomposable}
In our proofs, hyperbolicity of the edge groups is used only in Theorem~\ref{theorem: readaptation of relative HHG} and Lemma~\ref{remark: elementarizer}. Thus we expect that finding appropriate replacements for the two results above will yield a sufficient condition for a (hierarchically hyperbolic)-2-decomposable group to be hierarchically hyperbolic. However, the question becomes harder when asking for a full characterization. 
As remarked before, all hierarchically hyperbolic groups are balanced, hence balancedness is surely a necessary condition in Question 2.

\begin{question}
Under which conditions a (hierarchically hyperbolic)-2-decomposable group is hierarchically hyperbolic?
\end{question}

A possible strategy to answer this question would be to extend the tools developed in Section~\ref{section: graph of word hyperbolic groups} to the class of hierarchically hyperbolic groups. That is to say, provide conditions guaranteeing that the hierarchically hyperbolic structure of edge groups can be included in the one of the vertex group. 

However, we don't think this strategy would work in the general case.  For instance, consider $\mathbb{Z}^2$-2-decomposable groups (also known as \emph{tubular groups}). If one vertex has three incoming edges, defining pairwise linearly independent lines, there is no straightforward way of defining a hierarchically hyperbolic group structure on $\mathbb{Z}^2$ that contains each edge group.

\subsection*{Acknowledgments} 
The first author would like to thank Montserrat Casals-Ruiz, Mark Hagen and Ilya Kazachkov for numerous and attentive discussions on the work presented in this paper and also for reading and commenting very early versions of it. The second author would like to thank Alessandro Sisto for numerous helpful discussions and suggestions. 
We thank Daniel Woodhouse and Matteo Pintonello for several early inputs on this paper. We would also like to thank Jason Behrstock and Harry Petyt for numerous comments that helped to improve the exposition of the paper.

The first author was supported by the University of the Basque Country through the grant PIF17/241. He also acknowledges the support of the ERC grant PCG-336983, Basque Government Grant IT974-16, and Ministry of Economy, Industry and Competitiveness of the Spanish Government Grant PID2019-107444GA-I00.

The second author was partially supported by the Swiss National Science Foundation (grant \# 182186).

\section{Graph of groups and balanced groups}\label{section: graph of groups and balanced groups}

\subsection{Graph of groups}
We start by recalling the definition of a graph of groups. As usually with graph of groups, we will consider oriented edges. Many of the results of this subsection are probably known to experts. We include them here for the reader's convenience and to uniformize notation.
\begin{defn}
A \emph{graph} \(\Gamma\) consists of sets \(V(\Gamma)\), \(E(\Gamma)\) and maps 
\begin{align*}
    E(\Gamma) &\to V(\Gamma) \times V(\Gamma);  &  E(\Gamma) &\to E(\Gamma) \\
    e &\mapsto (e^{+}, e^{-}) &  e & \mapsto \bar{e}
\end{align*}
satisfying \(\bar{\bar{e}} = e\), \(\bar{e}\neq e\) and  \(\bar{e}^{-} = e^{+}\).
\end{defn}
The elements of \(V(\Gamma)\) are called \emph{vertices},  the ones of \(E(\Gamma)\) are called \emph{edges}, the vertex \(e^{-}\) is the \emph{source} of \(e\), \(e^{+}\) is the \emph{target} and \(\bar{e}\) is the \emph{reverse edge}. A graph \(\Gamma\) is \emph{finite} if both \(V(\Gamma), E(\Gamma)\) are finite sets.
A \emph{subgraph} of \(\Gamma\) is a graph \(\Gamma'\) such that \(V(\Gamma') \subseteq V(\Gamma)\) and \(E(\Gamma') \subseteq E(\Gamma)\). 
Given a graph \(\Gamma\), it is standard to associate to it a \(\Delta\)--complex \(\vert \Gamma \vert\). We say that \(\Gamma\) is \emph{connected} if \(\vert \Gamma \vert\) is. We say that a graph \(\Gamma\) is a \emph{tree} if \(\vert \Gamma \vert\) is simply connected.
We say that a subgraph \(T\) of \(\Gamma\) is a \emph{spanning tree} if \(V(T) = V(\Gamma)\) and \(T\) is a tree. 
\begin{defn}
A \emph{graph of group} \(\mc{G}\) consists of a finite graph \(\Gamma\), a collection of groups \(\{G_v \mid v \in V(\Gamma)\}\), \(\{G_e \mid e \in E(\Gamma)\}\) and injective homorphisms \(\phi_{e^\pm}:G_e \to G_{e^{\pm}}\) such that 
\begin{enumerate}
    \item \(G_{e} = G_{\bar{e}}\);
    \item \(\phi_{e^{+}} = \phi_{\bar{e}^{-}}\).
\end{enumerate}
\end{defn}
 We will often use the notation \(V(\mc{G})\) to denote \(V(\Gamma)\) and similarly for \(E(\mc{G})\). 
\begin{defn}
Let \(\mc{G}= (\Gamma, \{G_v\}, \{G_e\}, \{\phi_{e^{\pm}}\})\) be a graph of groups. We define the group \(F\mc{G}\) as:
\[F\mc{G} = \left(\bigast_{v \in V(\Gamma)}G_v\right) \ast \left(\bigast_{e \in E(\Gamma)} \langle t_{e} \rangle\right).\]
Let \(T\) be a spanning tree of \(\Gamma\). Then the \emph{fundamental group} of \(\mc{G}\) with respect to \(T\), denoted by \(\pi_1(\mc{G},T)\), is the group obtained adding the following relations to \(F\mc{G}\):
\begin{enumerate}
    \item \(t_e = t_{\bar{e}}^{-1}\);
    \item \(t_e = 1\) if \(e \in E(T)\);
    \item \(t_e \phi_{e^{+}}(x)t_e^{-1} = \phi_{e^{-}}(x)\) for all \(x \in G_e\).
\end{enumerate}
\end{defn}

\begin{rmk}\label{rmk: pi_1 of G does not depend on the spannig tree} 
The group \(\pi_1(\mc{G}, T)\) does not depend on the choice of the spanning tree, meaning that for different spanning trees \(T, T'\) there is an isomorphism \(\pi_1(\mc{G},T )\to \pi_1(\mc{G},T')\). For this reason, we will often denote \(\pi_1(\mc{G},T)\) simply by \(\pi_1(\mc{G})\) (see, for instance \cite[Proposition~20]{SerreTrees}). \end{rmk}

Unless otherwise specified, we will represent the elements of \(\pi_1(\mc{G})\) in the alphabet \(\bigcup_{v \in V(\mc{G})} G_v \cup \bigcup_{e \in E(\mc{G})}\langle t_{e}\rangle\). That is, we write each element \(g \in \pi_1(\mc{G})\) as 
\(g = x_0 x_1 \dots x_k\) where either \(x_i \in G_v\) for some \(v\), or \(x_i = t_{e}^{m}\) for some \(e\in E(\mc{G})\). Moreover, we will assume that if \(1 \neq x_i \in G_v\), then \(x_{i+1} \not\in G_v\), and similarly if \(1 \neq x_i \in \langle t_e \rangle\), then \(t_{i+1} \not \in \langle t_e \rangle\). Note that this is not a restrictive assumption as if  \(x_i, x_{i+1} \in G_v\), then we replace them by the element \(x'= x_{i}x_{i+1} \in G_v\), and similarly for \(\langle t_e \rangle\). Finally, we will assume that if \(x_i\) has the form \(t_e^{\epsilon}\), then \(\epsilon \geq 0\). Indeed, otherwise substitute \(t_e^{\epsilon}\) with \(t_{\bar{e}}^{-\epsilon}\).

For many purposes it is convenient to choose a way to write elements of \(\pi_1(\mc{G})\) that takes the geometry of the graph in account. 
\begin{defn}
A word \(w\) is written in \emph{path form} if 
\[w = g_0t_{e_1}^{\epsilon_1}g_1\ldots t^{\epsilon_n}_{e_n}g_n,\] where $\varepsilon_i=\pm 1$ and we require \(g_i \in G_{e_{i+1}^-}\) and \(g_i \in G_{e_i^+}\), whenever defined, and \(g_0, g_n \in G_v\) for some \(v\). As a consequence, \(e_1, \dots, e_n\) form a closed path in \(\Gamma\). We say that the path form is \emph{based} at \(v\).
\end{defn}

\begin{rmk}\label{rmk: writing elements in path form}
Let \(u\) be any word in the alphabet \(\bigcup G_v \cup \{t_{e}\}_{e \in E(\mc{G})}\). It is always possible to replace \(u\) with some \(p\) written in path form such that \(u\) and \(p\) represent the same element of \(\pi_1(\mc{G},T)\). Moreover, the loop of edges associated can be based at any vertex of \(\mc{G}\). Indeed, suppose that the beginning of \(u\) is of the form \(g_0 g_1\), with \(g_0 \in G_v\), \(g_1 \in G_w\). 
Choose a path  \(e_1, \dots, e_m\) in \(T\) between \(v\) and \(w\). This is always possible since \(T\) is a spanning tree. Then replace the beginning of \(u\) with \(g_0 t_{e_1} 1 t_{e_{2}} \dots t_{e_n}g_1\), where \(1\) represents the trivial element. The case where one (or both) of \(g_0, g_1\) were stable letters is analogous. Since we added only stable letters corresponding to edges in the spanning tree, we did not change the group element represented. Proceeding in this way we obtain a word \(p'\) written in path form that represents the same element of \(u\). Suppose that the loop associated to \(p'\) is based at some vertex \(v\), and we want to have a word based at some other vertex \(w\). Again, by considering a path \(e_1, \dots , e_m\) connecting \(v\) and \(w\) in the spanning tree \(T\), we can conjugate \(p'\) by \(t_{e_1} 1 t_{e_2} \dots t_{e_n}\) to obtain the desired word \(p\). 
\end{rmk}

In particular, every element \(g\in \pi_1(\mc{G})\) can be written in path form. 

\begin{rmk}Following the above argument, an element $\omega$ can always be written in path form $\omega=g_0t_{e_1}^{\varepsilon_1}\ldots t_{e_n}^{\varepsilon_n}g_n$ where $\varepsilon_i=1$ for every $i$. We adopt this notation when considering an element written in path form in a graph of groups where the underlying graph has more than one edge. We believe that this renders the proofs of results more readable. However, if we are working with either a simple HNN extension or free product with amalgamation then we allow $\varepsilon$ to be $-1$.
\end{rmk}

\begin{thm}[\textbf{Normal form}]\label{thm: normal form thm}
Let \(\mc{G}\) be a graph of groups and let \(g = g_0 t_{e_1} \dots t_{e_n}g_n\) be written in path form. Then if \(g = 1\) in \(\pi_1(\mc{G})\), there is \(i\) such that \(e_{i} = \bar{e}_{i+1}\) and \(g_i \in \phi_{{e_i}^{+}}(G_{e_i})\).
\end{thm}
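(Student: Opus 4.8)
The plan is to argue by contraposition. Call the path form $g = g_0 t_{e_1}\cdots t_{e_n}g_n$ \emph{reduced} if there is no index $i$ with $e_i = \bar e_{i+1}$ and $g_i \in \phi_{e_i^+}(G_{e_i})$; I would show that a reduced path form with $n \ge 1$ represents a non-trivial element of $\pi_1(\mc{G})$ (when $n = 0$ the element is simply $g_0 \in G_v$ and the statement is vacuous). The main tool is the Bass--Serre tree $S$ of $(\mc{G}, T)$: the group $\pi_1(\mc{G})$ acts on $S$ with quotient $\Gamma$, the vertex set of $S$ is $\bigsqcup_{v \in V(\Gamma)} \pi_1(\mc{G})/G_v$ and its edge set $\bigsqcup_{e\in E(\Gamma)} \pi_1(\mc{G})/G_e$, with the copies of $T$ glued in so that the orbit map over $v$ is $gG_v \mapsto v$; in particular the stabiliser of the base vertex $\tilde v_0$ lying over the basepoint $v_0$ of the path equals $G_{v_0}$.

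First I would read off from the path form an edge-path $\gamma$ in $S$ from $\tilde v_0$ to $g\cdot\tilde v_0$ of combinatorial length $n$: writing $p_k = g_0 t_{e_1}g_1 \cdots g_{k-1}t_{e_k}$ for the partial products, $\gamma$ runs through the vertices $\tilde v_0 = \tilde w_0,\ \tilde w_1, \ldots,\ \tilde w_n$, where $\tilde w_k = p_k g_k G_{e_k^+}$ for $1 \le k \le n$ and, since $e_n^+ = v_0$, one has $\tilde w_n = g\cdot\tilde v_0$; the $k$-th edge of $\gamma$ is the lift of $e_k$ determined by $p_{k-1}g_{k-1}$. The crux is the claim that $\gamma$ \emph{backtracks at its $k$-th interior vertex}, i.e.\ its $(k{+}1)$-st edge is the reverse of its $k$-th edge, \emph{if and only if} $e_k = \bar e_{k+1}$ (so these two edges project to a reverse pair of edges of $\Gamma$) \emph{and} $g_k \in \phi_{e_k^+}(G_{e_k})$ (so the two lifts, which already share both endpoints in $S$, actually coincide). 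Granting the claim, reducedness says precisely that $\gamma$ has no backtracking. Since $S$ is a tree, a non-backtracking edge-path of positive length has distinct endpoints; and if all the $e_i$ lay in $E(T)$ then $e_1,\ldots,e_n$ would be a reduced closed loop in the tree $\vert T\vert$, forcing $n = 0$, so for $n \ge 1$ at least one $e_i\notin E(T)$ and $\gamma$ genuinely has positive length. Therefore $g\cdot\tilde v_0 \neq \tilde v_0$, whence $g \neq 1$.

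I expect the main obstacle to be the verification of the claim: one must pin down the description of $S$ relative to the chosen spanning tree $T$ (how the lifts of $T$-edges versus non-$T$-edges are attached, and how left multiplication permutes the relevant cosets) and then check that the equality of the two lifts in question unwinds exactly to the membership $g_k \in \phi_{e_k^+}(G_{e_k})$; this is routine but bookkeeping-heavy. An alternative route, avoiding trees, is to reduce to the classical normal-form theorems: collapsing $T$ exhibits $\pi_1(\mc{G})$ as an iterated HNN extension of the tree product $G_T := \pi_1(\mc{G}|_T)$ with stable letters $t_e$ for $e\notin E(T)$; one absorbs the maximal $T$-edge subwords of the path form into single elements of $G_T$, applies Britton's Lemma to locate a pinch among the remaining stable letters, and then translates the pinch back to the stated conclusion using that the $T$-path between two mutually reverse non-$T$-edges is a closed, hence trivial, loop in $\vert T\vert$; the base case where all $e_i$ lie in $E(T)$ is handled by induction on $\vert E(T)\vert$ together with the normal form theorem for amalgamated products. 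In either approach this is the classical normal form theorem for fundamental groups of graphs of groups (see e.g.\ \cite{SerreTrees}), and essentially all the effort lies in reconciling the "path form" bookkeeping used here with the standard formulation.
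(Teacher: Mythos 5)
Your proposal is correct in substance, but note that the paper does not actually prove this statement at all: it simply cites Bogopolski (Theorem~16.10) for a detailed combinatorial proof, so any complete argument you give is necessarily a different route. Your Bass--Serre argument is the standard geometric one, and the key claim does check out: with the convention that the lift of \(e\) determined by \(h\in\pi_1(\mc{G})\) joins \(hG_{e^-}\) to \(ht_eG_{e^+}\) and has stabiliser \(h\phi_{e^-}(G_e)h^{-1}\), the \(k\)-th and \((k{+}1)\)-st edges of your path \(\gamma\) coincide (as unoriented edges) precisely when \(e_{k+1}=\bar e_k\) and \(t_{e_k}g_kt_{e_k}^{-1}\in\phi_{e_k^-}(G_{e_k})\), i.e.\ \(g_k\in\phi_{e_k^+}(G_{e_k})\); reducedness then makes \(\gamma\) a non-backtracking path of length \(n\) in a tree, so \(g\cdot\tilde v_0\neq\tilde v_0\) and \(g\neq 1\). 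The one caveat worth stating explicitly is a mild circularity: the fact that the Bass--Serre graph \emph{is} a tree with the stated stabilisers is, in most developments (including Serre's), established by an argument equivalent in strength to the normal form theorem itself, so to make your first route honestly non-circular you should either import tree-ness from an independent construction (e.g.\ the graph-of-spaces universal cover) or fall back on your second, more elementary route via Britton's lemma and the amalgam normal form, which is essentially what Bogopolski's cited proof does.

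One sentence in your write-up is actually false, though harmlessly so: you claim that if the path form is reduced and all \(e_i\) lie in \(E(T)\), then \(e_1,\dots,e_n\) is a reduced loop in \(\vert T\vert\), forcing \(n=0\), and hence that for \(n\geq 1\) some \(e_i\notin E(T)\). Reducedness of the path form does not forbid \(e_i=\bar e_{i+1}\) (it only forbids it when additionally \(g_i\in\phi_{e_i^+}(G_{e_i})\)): for an amalgam over a tree edge \(e\), the word \(t_eg_1t_{\bar e}\) with \(g_1\notin\phi_{e^+}(G_e)\) is reduced, has \(n=2\), and uses only \(T\)-edges. Fortunately this detour is unnecessary: lifts of \(T\)-edges are genuine edges of the Bass--Serre tree (setting \(t_e=1\) affects the labels of the lifts, not their existence), so \(\gamma\) has combinatorial length exactly \(n\) in all cases, and the conclusion \(g\cdot\tilde v_0\neq\tilde v_0\) for reduced words with \(n\geq 1\) follows without any case distinction on whether the \(e_i\) lie in the spanning tree.
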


\begin{proof}
This is a well known result. For a detailed proof see \cite[Theorem 16.10]{Bogopolski}.
\end{proof}
\begin{rmk}
Note, when the underlying graph of \(\mc{G}\) is a tree, we have \(t_{e_i}= 1\) in \(\pi_1(\mc{G})\) for all edges \(e_i\). Hence, we can greatly simplify path forms to simply be \(g_0 \dots g_n\) where \(g_i \in G_v\) for some vertex \(v\) and \(g_{i \pm 1} \not \in G_v\), and modify the normal form theorem accordingly.
\end{rmk}

\begin{defn}
Let \(\mc{G}\) be a graph of groups. A path word \(g = g_0 t_{e_1} \dots t_{e_n}g_n\) is written in \emph{reduced form} if for each \(i\) such that \(e_i = \bar{e}_{i+1}\) it follows that \(g_i \not\in \phi_{t_{e_i^+}}(G_{e_i})\).
\end{defn}
\begin{corollary}\label{cor: existence reduced form}
For every \(g \in \pi_1(\mc{G},T)\) and \(v \in V(\mc{G})\) it is possible to write \(g\) in a reduced form based at the vertex \(v\).
\end{corollary}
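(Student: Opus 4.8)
The plan is to start from an arbitrary path form of $g$ based at $v$ and to reduce it by repeatedly cancelling backtracks, using the defining relation $t_e\phi_{e^+}(x)t_e^{-1}=\phi_{e^-}(x)$; since each cancellation removes two stable letters, the process terminates at a reduced form, which is automatically still based at $v$ because only interior letters are affected.

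Concretely, first invoke Remark~\ref{rmk: writing elements in path form} to write $g$ in some path form $g=g_0t_{e_1}\cdots t_{e_n}g_n$ based at $v$, so that $g_0,g_n\in G_v$ and $e_1,\dots,e_n$ is a closed path at $v$. We induct on $n$. If $n\leq 1$ there is no index $i$ with $e_i=\bar e_{i+1}$, so the word is reduced by default. If $n\geq 2$ and the word is already reduced we are done; otherwise choose $i$ with $e_i=\bar e_{i+1}$ and $g_i=\phi_{e_i^+}(x)$ for some $x\in G_{e_i}$. Since $e_{i+1}=\bar e_i$ we have $t_{e_{i+1}}=t_{e_i}^{-1}$, so the defining relation gives
\[t_{e_i}\,g_i\,t_{e_{i+1}}=t_{e_i}\,\phi_{e_i^+}(x)\,t_{e_i}^{-1}=\phi_{e_i^-}(x)\in G_{e_i^-}.\]
Replacing the subword $g_{i-1}t_{e_i}g_it_{e_{i+1}}g_{i+1}$ (or its truncation when $i=1$ or $i=n-1$) by the single coefficient $g_{i-1}\,\phi_{e_i^-}(x)\,g_{i+1}$ yields a word with $n-2$ stable letters.

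Then one checks that this new word is again a path form based at $v$: since $e_1,\dots,e_n$ is a path we have $e_{i-1}^+=e_i^-=e_{i+1}^+=e_{i+2}^-$, so $g_{i-1}$, $\phi_{e_i^-}(x)$ and $g_{i+1}$ all lie in the single group $G_{e_i^-}$, their product lies in the group demanded by the path-form conditions for the edges $e_{i-1}$ and $e_{i+2}$, and deleting the backtrack $e_i,\bar e_i$ leaves a closed path still based at $v$; the boundary coefficients $g_0,g_n$ are either unchanged or, when $i=1$ (resp. $i=n-1$), merged with an element of $G_{e_1^-}=G_v$ (resp. $G_{e_n^+}=G_v$), hence still lie in $G_v$. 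Applying the inductive hypothesis to this shorter word completes the argument. The only point requiring care is this last bookkeeping of which vertex group each coefficient belongs to after a cancellation; there is no genuine obstacle.
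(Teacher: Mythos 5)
Your argument is correct and is exactly the standard reduction the paper leaves implicit: the corollary is stated without proof, being the evident consequence of Remark~\ref{rmk: writing elements in path form} (path form based at any vertex) together with iterated cancellation of backtracks via $t_e\phi_{e^+}(x)t_e^{-1}=\phi_{e^-}(x)$, which terminates since each step removes two stable letters. Your bookkeeping that the result is again a path form based at $v$ (including the boundary cases $i=1$, $i=n-1$) is accurate, so nothing is missing.
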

We recall now the notion of commensurable subgroups. Note that this should not be confused with the weaker condition of \emph{abstract commensurability:}  two groups \(A, B\) are abstractly commensurable if they contain isomorphic finite index subgroups.
\begin{defn}\label{def: commensurable subgroups}
 Let \(G\) be a group and \(A, B \leq G\) be subgroups. We say that \(A\) and \(B\) are \emph{commensurable} if there exists \(g \in G\) such that \(gAg^{-1} \cap B\) has finite index in \(B\) and \(A \cap g^{-1} B g\) has finite index in \(A\).

Moreover, we say that two elements $a,b\in G$ are non-commensurable if $\langle a\rangle$ and $\langle b\rangle$ are non-commensurable in $G$.
\end{defn}

\begin{rmk}When \(A, B \leq G\) are infinite virtually cyclic, being commensurable amounts to the existence of some \(g \in G\) such that $\lvert A^g\cap B\rvert = \infty$. Conversely, if $|A^g\cap B|<\infty$ for every $g\in G$ then $A$ and $B$ are not commensurable in $G$.
\end{rmk}

A handy application of the normal form Theorem is the following.
\begin{lemma}\label{lemma: conjugacy path in G}
Let \(\mc{G}\) be a graph of groups, let \(v, w \in V(\mc{G})\) and \(x \in G_v -\{1\}\), \(y \in G_w-\{1\}\). Then \(x,y\) are conjugate in \(\pi_1(\mc{G},T)\) if and only if there is a sequence of edges \(e_1, \dots, e_n\) between \(v\) and \(w\) and elements \(g_i\) satisfying \(g_i \in G_{e_{i}^{+}}, g_i \in G_{e^{-}_{i+1}}\), whenever defined, such that:
\[(g_0 t_{e_1} g_{1} \dots t_{e_n}g_n) x (g_0 t_{e_1} g_{1} \dots t_{e_n}g_n)^{-1} = y.\]
Moreover, for each \(g_i\) we have \( \phi_{e_{i+1}^{-}}(G_{e_{i+1}}) \cap g_i\phi_{e_{i}^{+}}(G_{e_i})g_{i}^{-1} \neq \{1\}\).
\end{lemma}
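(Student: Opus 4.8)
The plan is to reduce the statement to the Normal Form Theorem (Theorem~\ref{thm: normal form thm}) applied to a suitable conjugacy relation. First I would observe that the ``if'' direction is immediate: if such a sequence of edges and elements $g_i$ exists with $(g_0 t_{e_1}g_1\cdots t_{e_n}g_n)x(g_0 t_{e_1}g_1\cdots t_{e_n}g_n)^{-1}=y$, then $x$ and $y$ are conjugate by definition, so nothing is to be shown there (the ``moreover'' clause is part of the ``only if'' direction). For the ``only if'' direction, suppose $cxc^{-1}=y$ for some $c\in\pi_1(\mc{G},T)$. By Remark~\ref{rmk: writing elements in path form} I can write $c$ in path form based at $v$, say $c = g_0 t_{e_1}^{\epsilon_1}g_1\cdots t_{e_n}^{\epsilon_n}g_n$, where the associated edge path $e_1,\dots,e_n$ runs from $v$ to some vertex, and since $x\in G_v$ I may arrange $g_0\in G_v$, and since I need the conjugate to land in $G_w$ I want the path to terminate at $w$; by Remark~\ref{rmk: writing elements in path form} again I can prepend/append trivial stable letters along a spanning-tree path so that the path form is based at $v$ and ends at $w$ with $g_n\in G_w$ (those added letters are trivial in $\pi_1(\mc{G})$, so they change nothing).

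Next I would form the element $W := c\, x\, c^{-1} y^{-1}$, which equals $1$ in $\pi_1(\mc{G})$, and write it as a path word by concatenating the path form of $c$, the letter $x\in G_v$, the reversed path form of $c^{-1}$ (whose edge path is $\bar e_n,\dots,\bar e_1$), and the letter $y^{-1}\in G_w$. This is a closed path word based at $v$. Now I apply the Normal Form Theorem: since $W=1$, there is an index $i$ at which a cancellation $e_i=\bar e_{i+1}$ occurs with the intermediate vertex-group element lying in $\phi_{e_i^+}(G_{e_i})$. I would then show, by a downward induction on $n$, that after performing this cancellation (absorbing the $\phi_{e_i^+}(G_{e_i})$ element into the two neighbouring syllables, using relation (3) in the definition of $\pi_1$ to push it across $t_{e_i}$) we obtain a shorter closed path word equal to $1$, representing the same conjugacy data with a shorter edge path between $v$ and $w$. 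The base case of the induction is when no stable letters survive, where $W = g_0 x g_0^{-1} y^{-1}=1$ in $G_v=G_w$ with $v=w$, giving the trivial edge path; tracking the induction backwards reconstructs the desired factorization $(g_0 t_{e_1}g_1\cdots t_{e_n}g_n)x(\cdots)^{-1}=y$ with a genuine edge path from $v$ to $w$.

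For the ``moreover'' clause, I would argue that in the resulting reduced conjugating word each consecutive conjugation step must be nontrivial on the relevant edge group: the element $y$ (and each intermediate conjugate of $x$) is nontrivial, and a standard length/normal-form argument shows that conjugating a nontrivial element of a vertex group $G_{e_i^+}$ across an edge $e_{i+1}$ to land in $G_{e_{i+1}^-}$ forces the conjugated subgroup to intersect $\phi_{e_{i+1}^-}(G_{e_{i+1}})$ nontrivially — otherwise the normal form of the conjugate would have strictly greater syllable length and could not lie in $G_w$. Concretely, $g_i \phi_{e_i^+}(G_{e_i})g_i^{-1}\cap \phi_{e_{i+1}^-}(G_{e_{i+1}})\ne\{1\}$ is exactly the condition that prevents a further reduction at step $i$, so it is automatic once the word is reduced.

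The main obstacle I anticipate is bookkeeping: carefully managing the path forms of $c$ and $c^{-1}$ (orientations of edges, the $\epsilon_i=\pm1$ versus the convention $\epsilon_i=1$, and which syllable is ``intermediate'' at a cancellation), and making the downward induction on the number of stable letters fully rigorous so that the reduced conjugator really is expressible as a single path word $g_0 t_{e_1}g_1\cdots t_{e_n}g_n$ based at $v$ and ending at $w$ — rather than as a product of such that only happens to conjugate $x$ to $y$. Getting the ``moreover'' intersection condition to fall out cleanly from reducedness, rather than requiring a separate argument, is the other delicate point.
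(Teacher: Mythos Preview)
Your plan is correct and follows essentially the same route as the paper: write the conjugator in path form, set $cxc^{-1}y^{-1}=1$, and repeatedly apply the Normal Form Theorem to peel off stable letters. The paper organizes the reduction slightly differently---it first puts the conjugator in \emph{reduced} path form $u$, then prepends a spanning-tree path $p$ to correct basepoints, so that any cancellation not involving $x$ must occur at the $p$--$u$ junction (shortening $p$); once $p$ is exhausted, every remaining cancellation passes through $x$, and the successive partial conjugates $(g_i t_{e_{i+1}}\cdots g_n)x(\cdots)^{-1}$ are forced into the edge-group images, which yields the ``moreover'' clause directly.

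One point to sharpen: your final sentence conflates the ``moreover'' condition with reducedness of the conjugating word. These are different statements. Reducedness at position $i$ is the condition $g_i\notin\phi_{e_i^+}(G_{e_i})$ when $e_i=\bar e_{i+1}$, i.e.\ an \emph{exclusion}; the ``moreover'' clause asserts a \emph{nontrivial intersection} $g_i\phi_{e_i^+}(G_{e_i})g_i^{-1}\cap\phi_{e_{i+1}^-}(G_{e_{i+1}})\ne\{1\}$, and it holds for every $i$ regardless of whether $e_i=\bar e_{i+1}$. The witness for this intersection is not reducedness but the nontrivial partial conjugate of $x$ itself, which must lie in $\phi_{e_i^+}(G_{e_i})$ (else no cancellation through $x$ is possible) and, after conjugating by $g_i$ and pushing across $t_{e_{i+1}}$, in $\phi_{e_{i+1}^-}(G_{e_{i+1}})$. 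Your penultimate sentence had this right; drop the last one.
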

\begin{proof}
One implication is clear, we need to show the other. 
Suppose \(x,y\) are conjugate and let \(h \in \pi_1(\mc{G})\) be such that \(hxh^{-1}=y\). 
By Corollary \ref{cor: existence reduced form}, there is a reduced path word \(u= u_0 t_{e_1} u_{1} \dots t_{e_m}u_m\) based at the vertex \(v\) that represents \(h\). Choose a shortest path \(f_1, \dots, f_s\) of \(T\) that connects \(w\) and \(v\) and let \(p = t_{f_1} 1 t_{f_2} \dots t_{f_s}\). Then we have \((pu)x(pu)^{-1} = y\), where both sides of the equations are path words based at \(w\). If we multiply by \(y^{-1}\), we have that \((pu)x(pu)^{-1}y^{-1} = 1\), where both sides of the equation are path words. Spelling it out we have:
\[\left[ \left(t_{f_1} 1 t_{f_2} \dots t_{f_s}\right)\left( u_0 t_{e_1} u_{1} \dots t_{e_m}u_m \right)\right]x\left[ \left( u_m^{-1} t^{-1}_{e_m} \dots  u^{-1}_{1}  t^{-1}_{e_a}u^{-1}_0 \right)\left(t^{-1}_{f_1} 1 t^{-1}_{f_2} \dots t^{-1}_{f_s}\right)\right]y^{-1} = 1.\]

 By the normal form Theorem (Theorem~\ref{thm: normal form thm}), in the left hand side of the equation there is a subword of the form \(t_{e} g t_{\bar{e}}\), with \(g \in \phi_{e^{+}}(G_e)\). Our goal is to perform reductions to assume that every such occurrence contains the \(x\). So, suppose this is not the case. Without loss of generality the subword must appear in \(\left[ \left(t_{f_1} 1 t_{f_2} \dots t_{f_s}\right)\left( u_0 t_{e_1} u_{1} \dots t_{e_m}u_m \right)\right]\). Since \(u\) was assumed to be reduced and \(f_1, \dots, f_s\) is a shortest path, the subword must be \(t_{f_s}u_0t_{e_1}\), where \(u_0 = \phi_{f_s^{+}}(z)\) for some \(z \in G_{f_s}\). Then replace \(t_{f_s}u_0t_{e_1}\) by \(\phi_{f^{-}_s}(z)\), and perform the symmetric change on the other side of the \(x\). Note that this process reduces the length of the path \(f_1, \dots, f_s\) by one. In particular, it has to terminate. 
 
 So, assume that no reduction can be performed in \(pu = \left[ \left(t_{f_1} 1 t_{f_2} \dots t_{f_s}\right)\left( u_0 t_{e_1} u_{1} \dots t_{e_m}u_m \right)\right]\). If \(pu = h_0 \in G_w\), and hence \(x,y\in G_w\) are conjugate in \(G_w\), we are done. So suppose this is not the case. We need to have that \(u_m x u_m^{-1} = \phi_{e_m^{+}}(z)\) for some \(z \in \G_{e_m}\). Substitute \(t_{e_m} u_m x u_m^{-1}t_{e_m}^{-1}\) with \(\phi_{e_m^{-}}(z)\).  
 By proceeding as above, we obtain the claim for each \(u_i\).\end{proof}

Whenever we are working with a graph of groups, it is often the case that we are interested in studying a subgraph of groups. To that end, we adopt the following notation.

\begin{notation}\label{notation: subgraph notation}
Let $\mathcal{G}$ be a graph of groups and $\Gamma$ its underlying graph. If $\Lambda\subseteq\Gamma$ is a connected subgraph, then we can define the subgraph of groups $\mathcal{G}|_{\Lambda}$, where the underlying graph is $\Lambda$, every vertex and edge in $\Lambda$ has the same associated groups as in $\mathcal{G}$.

We call $\mathcal{G}|_{\Lambda}$ the \emph{subgraph of groups spanned by $\Lambda$}.
\end{notation}

\begin{lemma}
Let $\mc{G}$ be a graph of groups and let $\Lambda \subseteq \Gamma$ be a subgraph. Let $T'\subseteq\Lambda$ be a spanning tree of $\Lambda$ defined as $T'=T\cap\Lambda$. Then, there exists a group injection $\pi_1(\mc{G}|_{\Lambda},T')\hookrightarrow \pi_1(\mc{G},T)$.
\end{lemma}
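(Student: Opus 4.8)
The plan is to construct the injection explicitly on the level of generating sets and then verify injectivity using the normal form theorem (Theorem~\ref{thm: normal form thm}). First I would define the obvious homomorphism $\iota\colon \pi_1(\mc{G}|_\Lambda, T') \to \pi_1(\mc{G},T)$. Recall $\pi_1(\mc{G}|_\Lambda,T')$ is a quotient of $F(\mc{G}|_\Lambda) = \left(\bigast_{v\in V(\Lambda)} G_v\right) \ast \left(\bigast_{e\in E(\Lambda)}\langle t_e\rangle\right)$. Since $V(\Lambda)\subseteq V(\Gamma)$ and $E(\Lambda)\subseteq E(\Gamma)$, there is an evident inclusion of free products $F(\mc{G}|_\Lambda)\hookrightarrow F\mc{G}$, sending each $G_v$ and each $\langle t_e\rangle$ identically to the corresponding factor of $F\mc{G}$. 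I would then check that this inclusion descends to the quotients: the defining relations of $\pi_1(\mc{G}|_\Lambda,T')$ are $t_e = t_{\bar e}^{-1}$, $t_e = 1$ for $e\in E(T')$, and $t_e\phi_{e^+}(x)t_e^{-1} = \phi_{e^-}(x)$ for $x\in G_e$. The first and third are among the defining relations of $\pi_1(\mc{G},T)$ verbatim. For the second, I need that $e\in E(T')$ implies $e\in E(T)$, which holds because $T' = T\cap\Lambda$ by hypothesis. Hence $\iota$ is a well-defined homomorphism.

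Next I would prove $\iota$ is injective. Take $g\in \pi_1(\mc{G}|_\Lambda,T')$ with $\iota(g) = 1$ in $\pi_1(\mc{G},T)$, and write $g$ in path form $g = g_0 t_{e_1}^{\epsilon_1} g_1 \cdots t_{e_n}^{\epsilon_n} g_n$ as a word over the alphabet of $\mc{G}|_\Lambda$ (possible by Remark~\ref{rmk: writing elements in path form} applied inside $\Lambda$, using the spanning tree $T'$). This same word is a path form word for $\iota(g)$ in $\mc{G}$, since the edges $e_1,\dots,e_n$ lie in $\Lambda\subseteq\Gamma$ and the $g_i$ lie in the same vertex groups. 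If $n = 0$, then $g = g_0 \in G_v$ for a vertex $v\in V(\Lambda)$, and $\iota(g_0) = g_0 = 1$ in $\pi_1(\mc{G},T)$ forces $g_0 = 1$ since $G_v$ embeds in $\pi_1(\mc{G},T)$ (a standard consequence of the normal form theorem). If $n \geq 1$, apply Theorem~\ref{thm: normal form thm} in $\pi_1(\mc{G},T)$: there is $i$ with $e_i = \bar e_{i+1}$ and $g_i\in\phi_{e_i^+}(G_{e_i})$. But $e_i\in E(\Lambda)$, so $e_{i+1} = \bar e_i\in E(\Lambda)$, and $G_{e_i}$ together with the edge maps $\phi_{e_i^\pm}$ are the same in $\mc{G}|_\Lambda$; thus this same reduction can be performed in the word viewed inside $\pi_1(\mc{G}|_\Lambda,T')$. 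After carrying out the reduction we obtain a strictly shorter path form word in $\mc{G}|_\Lambda$ still representing $g$ and still mapping to $1$ under $\iota$, so by induction on $n$ we conclude $g = 1$ in $\pi_1(\mc{G}|_\Lambda,T')$.

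The one subtlety — and the step I expect to require the most care — is the bookkeeping around the spanning tree: one must be sure that $T' = T\cap\Lambda$ is genuinely a spanning tree of $\Lambda$ (connected, containing all vertices of $\Lambda$, simply connected), which is part of the statement's hypothesis rather than something to prove, but it is what makes the relation $t_e = 1$ compatible under $\iota$, and it is also what ensures the path-form rewriting of Remark~\ref{rmk: writing elements in path form} can be carried out within $\Lambda$ using only stable letters of $T'$. A cleaner alternative, which I would mention as a remark, is to invoke Bass--Serre theory directly: $\pi_1(\mc{G},T)$ acts on its Bass--Serre tree, the subgroup generated by $\bigcup_{v\in V(\Lambda)}G_v \cup \bigcup_{e\in E(\Lambda)}\langle t_e\rangle$ stabilizes a subtree whose quotient graph of groups is exactly $\mc{G}|_\Lambda$, and hence this subgroup is canonically isomorphic to $\pi_1(\mc{G}|_\Lambda, T')$; but the normal-form argument above is self-contained given what has already been set up in the excerpt.
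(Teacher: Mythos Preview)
Your proposal is correct and follows essentially the same approach as the paper: define the obvious map on generators and use the normal form theorem to verify injectivity. The paper's proof is terser—it writes $g$ directly in \emph{reduced} form and observes that the normal form theorem then gives a biconditional in one step—whereas you write $g$ in path form and induct on $n$; but this is a cosmetic difference, not a substantive one.
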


\begin{proof}We define the function $\iota:\pi_1(\mc{G}|_{\Lambda},T')\to\pi_1(\mc{G},T)$ on generators such that $\iota|_{G_v}$ is the identity for every $v\in\Lambda$. Let  $g\in\pi_1(\mc{G},T)$ be written in reduced form as $g=g_0t_{e_1}^{\varepsilon_1}\ldots t_{e_n}^{\varepsilon_n}g_n$. By the normal form theorem, $\iota(g)=1$ if and only if there exists $i$ such that $e_i=\overline{e_{i+1}}$ and $g_i\in\phi_{e_i
^+}(G_{e_i})$. By definition, this is equivalent to $g=1$ in $\pi_1(\mc{G},T')$.
\end{proof}

\subsection{Balanced groups}
As mentioned in the introduction, a fundamental notion throughout the paper is the notion of balanced group.

\begin{defn}
Let \(G\) be a group and $g\in G$. We say that \(g\) is balanced either if \(g\) has finite order, or if whenever $g^n$ is conjugate to $g^m$, it must follow  $|n|=|m|$. We say that a group $G$ is \emph{balanced} if every  element is balanced. 
\end{defn}

\begin{lemma}[{\cite[Lemma~4.14]{wise2000subgroup}}]\label{lemma: finite index and balance}
Let $G$ be a group and assume that there exists a balanced subgroup $H$ of $G$ of finite index. Then, $G$ is balanced.
\end{lemma}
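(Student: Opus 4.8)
The statement to prove is Lemma~\ref{lemma: finite index and balance}: if $G$ has a balanced finite-index subgroup $H$, then $G$ is balanced.

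\medskip

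\noindent\textbf{Proof proposal.}

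The plan is to take an arbitrary infinite order element $g \in G$ together with a conjugacy relation $h g^n h^{-1} = g^m$ for some $h \in G$ and $n, m \neq 0$, and to deduce $|n| = |m|$ by pushing everything into the balanced subgroup $H$. The first step is to pass to a suitable power of $g$ lying in $H$: since $H$ has finite index $k = [G:H]$ in $G$, some power $g^r$ with $1 \le r \le k$ lies in $H$ (indeed one can take $r$ to be the index, or the order of $gH$ in any transversal argument); fix such an $r$ and set $x = g^r \in H$. Note $x$ still has infinite order.

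The second step is to promote the relation on $g$ to a relation on $x$. From $h g^n h^{-1} = g^m$ we immediately get, raising both sides to the $r$-th power and using that conjugation is a homomorphism, $h g^{nr} h^{-1} = g^{mr}$, i.e. $h x^n h^{-1} = x^m$. The remaining issue is that $h$ need not lie in $H$. To fix this, I would iterate the conjugation relation: applying $\mathrm{ad}_h$ repeatedly gives $h^j x^{n^j} h^{-j} = x^{m^j}$ for every $j \ge 0$ (one should be a little careful and note $x^{m}$ is again a power of $x$, so the hypothesis keeps applying; more precisely $h^2 x^{n^2} h^{-2} = h x^{nm} h^{-1}$ — here one uses that $h x^{nm} h^{-1} = (h x^n h^{-1})^m = x^{m\cdot m}$, so indeed $h^2 x^{n^2} h^{-2} = x^{m^2}$, and inductively $h^j x^{n^j} h^{-j} = x^{m^j}$). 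Now choose $j$ so that $h^j \in H$: since $[G:H] = k < \infty$, the element $hH$ has finite order dividing $k$ in the coset space under, say, taking $j$ to be a multiple of that order — concretely $j = k!$ works, or any $j$ with $h^j \in H$. For such $j$, the relation $h^j x^{n^j} h^{-j} = x^{m^j}$ is a conjugacy relation entirely inside $H$ between $x^{n^j}$ and $x^{m^j}$, with $x \in H$ of infinite order.

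The third step is to invoke balancedness of $H$: it yields $|n^j| = |m^j|$, hence $|n|^j = |m|^j$, and since $j \ge 1$ and $|n|, |m|$ are nonnegative integers, $|n| = |m|$. As this holds for every infinite order $g \in G$ and every conjugacy relation among its powers, $G$ is balanced. (Elements of finite order are balanced by definition, so there is nothing to check there.)

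\medskip

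\noindent\textbf{Main obstacle.} The only genuinely delicate point is handling the conjugating element $h \notin H$: the naive approach "replace $h$ by a power of $h$ in $H$" does not directly work on the relation $h g^n h^{-1} = g^m$ because $g^n$ and $g^m$ are different elements. The trick of iterating $\mathrm{ad}_h$ to get $h^j g^{n^j} h^{-j} = g^{m^j}$ is what makes the conjugator a power of $h$ while keeping both sides powers of the same element, and this is the step I expect to require the most care (in particular verifying the induction $h^{j+1} g^{n^{j+1}} h^{-(j+1)} = x^{m^{j+1}}$ cleanly). Everything else — passing to $g^r \in H$, choosing $j$ with $h^j \in H$, and reading off $|n| = |m|$ from $|n|^j = |m|^j$ — is routine finite-index bookkeeping.
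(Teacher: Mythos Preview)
Your proof is correct. The paper does not supply its own proof of this lemma; it simply cites \cite[Lemma~4.14]{wise2000subgroup}, so there is no in-paper argument to compare against. Your iteration trick $h^{j}x^{n^{j}}h^{-j}=x^{m^{j}}$ is the standard way to handle the conjugator, and the induction you sketch goes through cleanly.

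One small wording issue: when you write that $hH$ has ``finite order dividing $k$ in the coset space'', this implicitly assumes $G/H$ is a group, i.e.\ that $H$ is normal. The statement does not assume normality. Your parenthetical fix ``concretely $j=k!$ works'' is correct (pass to the normal core of $H$, which has index dividing $k!$), and the even simpler pigeonhole observation that among $H,hH,\dots,h^{k}H$ two cosets coincide already gives some $1\le j\le k$ with $h^{j}\in H$. Either way the argument is fine; just tighten that sentence.
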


We are now going to study how balanced groups behave under amalgamated products and HNN extension over virtually cyclic groups. A key property of virtually cyclic groups that will be used throughout the paper is that if \(a, b\) are infinite order elements of a virtually cyclic group, then there are \(N, M\) such that \(a^N = b^M\).

\begin{lemma}\label{lemma: amalgam and balance}
Let \(C\) be a virtually cyclic group and \(G = A\ast_C B\). Then \(G\) is balanced if and only if \(A, B\) are. 

\end{lemma}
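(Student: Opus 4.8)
The plan is to prove the two implications separately, the converse being the one that requires work. If $G$ is balanced then so is every subgroup: a finite-order element is balanced by definition, and if $a\in A$ has infinite order with $a^i$ conjugate to $a^j$ inside $A$, the same conjugacy holds in $G$, so $|i|=|j|$; likewise for $B$. For the converse, assume $A$ and $B$ are balanced, let $g\in G$ have infinite order with $g^i$ conjugate to $g^j$ in $G$, and aim to show $|i|=|j|$; we may assume $i,j>0$. First I would dispose of the case in which $g$ is not conjugate into a vertex group: by Theorem~\ref{thm: normal form thm} together with the standard cyclic-reduction argument, $g$ is conjugate to a cyclically reduced path word of syllable length $m\ge 2$, whose $n$-th power is cyclically reduced of syllable length $mn$; as conjugate cyclically reduced words have equal syllable length, $mi=mj$ and so $i=j$. (Equivalently, $g$ is a hyperbolic isometry of the Bass--Serre tree and one compares translation lengths.)

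It remains to treat $g$ conjugate into a vertex group. Since balancedness is invariant under conjugacy of the element, we may assume $g=a\in A$, and I would split according to whether some positive power of $a$ is conjugate in $G$ to an element of $C$. If no power of $a$ is conjugate in $G$ to an element of $C$, apply Lemma~\ref{lemma: conjugacy path in G} to the conjugacy $a^i\sim a^j$ (with $a^i,a^j$ regarded as elements of the vertex group $A$): if the conjugating path it produces were nontrivial, the reduction in the proof of that lemma would exhibit $a^i$ as conjugate in $G$ to an element of $C$, a contradiction; hence the path is trivial, $a^i$ and $a^j$ are conjugate in $A$, and $i=j$ because $A$ is balanced. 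If instead some $a^k$ with $k\ge 1$ is conjugate in $G$ to an element $c_0\in C$ — so that $C$ is infinite — then raising the conjugacy $a^i\sim a^j$ to the power $k$ gives $a^{ki}\sim a^{kj}$, hence $c_0^i\sim c_0^j$ in $G$. Thus it suffices to establish the following claim: if $c\in C$ has infinite order and $c^i$ is conjugate to $c^j$ in $G$, then $i=j$.

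To prove the claim I would again use Lemma~\ref{lemma: conjugacy path in G}, applied to the conjugacy $c^i\sim c^j$ with both elements regarded as lying in a vertex group containing $C$, and then peel the conjugating word one syllable at a time. At each peeling step, the fact that the whole expression must collapse to a single syllable (namely $c^j$) forces the current intermediate element to lie in $C$ — this is precisely where the ``moreover'' clause of Lemma~\ref{lemma: conjugacy path in G} is used — so one obtains a finite chain $c^i=w_0,w_1,\dots,w_k=c^j$ of infinite-order elements of $C$ in which consecutive terms are conjugate alternately in $A$ and in $B$. Now fix an infinite-order element $d\in C$, and for each infinite-order $x\in C$ set $\lambda(x)=|M|/N$, where $x^N=d^M$ with $N\ge 1$; such $N,M$ exist by the key property of virtually cyclic groups recalled above, $M\ne 0$ since $x$ has infinite order, and one checks that $\lambda$ is well defined with $\lambda(x^n)=|n|\lambda(x)$. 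If $x,y\in C$ have infinite order and $\gamma x\gamma^{-1}=y$ with $\gamma\in A$, then raising to suitable powers produces a conjugacy by $\gamma$ between two powers $d^{M_1}$ and $d^{M_2}$ of $d$; since $A$ is balanced and $d$ has infinite order, $|M_1|=|M_2|$, which rearranges to $\lambda(x)=\lambda(y)$, and the same holds with $B$ in place of $A$. Hence $\lambda$ is constant along the chain, so $i\,\lambda(c)=\lambda(c^i)=\lambda(c^j)=j\,\lambda(c)$, and $\lambda(c)>0$ gives $i=j$, proving the claim.

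The main obstacle is this last case. The hyperbolic case and the ``no power into $C$'' case each use the balancedness of only one factor, but when a power of $a$ lands in $C$ one must use balancedness of both $A$ and $B$, applied one link at a time to the chain extracted from Lemma~\ref{lemma: conjugacy path in G}; the real difficulty is to isolate the correct conjugacy invariant of infinite-order elements of $C$ — their ``slope'' $\lambda$ relative to a fixed infinite-order element — on which that balancedness can be brought to bear.
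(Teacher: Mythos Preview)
Your proof is correct and follows the same architecture as the paper's: translation length disposes of hyperbolic elements, and for elliptic elements one reduces to an element with a power in $C$ and then peels the conjugator one syllable at a time, invoking balancedness of $A$ or $B$ at each step. The difference is only in how that last step is executed. The paper runs an induction on the syllable length of the conjugator, at each stage introducing auxiliary exponents $P_1,\dots,P_4$ and checking that the inequality $|n|\neq|m|$ survives the peeling; your slope invariant $\lambda$ packages this same computation once and for all, and your appeal to Lemma~\ref{lemma: conjugacy path in G} replaces the paper's hand-rolled normal-form reduction. Two small remarks: what you actually need is that the running conjugate at each step lands in $C$, which is shown in the \emph{proof} of Lemma~\ref{lemma: conjugacy path in G} rather than being the literal content of its ``moreover'' clause; and since $\lambda(x^n)=|n|\lambda(x)$, your argument already yields $|i|=|j|$ directly, so the reduction to $i,j>0$ is unnecessary.
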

\begin{proof}
One implication is clear. To show the converse, let $g\in G$ be an infinite order element and let $h\in G$ be such that $hg^nh^{-1}=g^m$ for $|n|\neq|m|$. If $g$ is acts hyperbolically on the Bass-Serre tree $T$ corresponding to $G$, then the translation length $\ell_G(g)$ is positive. Moreover, $\ell_G(g^n)=|n|\ell_G(g)$ and $\ell_G(hgh^{-1})=\ell_G(g)$. Thus, if $hg^nh^{-1}=g^m$ then $|n|=|m|$, which is a contradiction. Thus, we can assume that $g$ acts elliptically on $T$. 

Therefore, there exists $x$ such that $xgx^{-1}$ belongs in $A$ or $B$. Assume without loss of generality that  $xgx^{-1}\in A$.  We have \begin{equation}\label{eq: first amalgamated product of balanced} (xhx^{-1})(xgx^{-1})^n(xhx^{-1})^{-1}=(xgx^{-1})^m.\end{equation}
If we write \(a = (xgx^{-1})\in A\) and \(k = xhx^{-1}\), Equation \eqref{eq: first amalgamated product of balanced} becomes \(ka^{n}k^{-1} = a^m\). Write \(k\) in normal form \(k_0 \cdots k_s\), where \(k_i \in A-1\) or \(B-1\). We have 
\[(k_0\cdots k_s)a^{Tn}(k_0 \cdots k_s)^{-1} a^{-Tm} = 1.\]
There are now two cases. First, assume that no powers of \(a\) can be conjugated into \(C\), for instance, this happens whenever \(\vert C \vert \leq \infty\). Then by the normal form theorem, \(s=0\) \(k_0 \in A\) and hence \(A\) was not balanced. 

So suppose that there is some power \(a^{\epsilon}\) of \(a\) that can be conjugated into \(C\). Up to conjugating \(a\) and \(k\) and taking powers of \(a\), we can assume that \(a \in C\) and \(k a^{n} k^{-1} = a^{m}\) holds. Again, consider the normal form  \(k = k_0 \dots k_s\). We will proceed by induction on \(s\). 

\textit{Case \(s = 0\)}. In this case we have \(k_0 a^{n} k_0^{-1} = a^m\). Since \(a \in C\), if \(k_0 \in A\) (resp. \(B\)), we have that \(A\) (resp. \(B\)) is unbalanced.  

\textit{Induction step}. Suppose that the claim holds for \(k\) with normal-form length \(s-1\). We will show that it holds for length \(s\). Consider the equation \(ka^{n} k^{-1} = a^{m}\) and assume that \(k\) has normal-form length \(s\).  Observe that for each \(T\) the equation \(ka^{Tn}k^{-1} = a^{Tm}\) still holds. We will show that, for \(T\) large enough, we can write \(ka^{Tn}k^{-1} = a^{Tm}\) as \(k'c^{n'}(k')^{-1} = c^{m'}\) with \(c \in C\), \(\vert n' \vert \neq \vert m'\vert\) and \(k'\) with normal-form length at most \(s-1\). Then we are done by induction hypothesis. 

We have 
\[(k_0\cdots k_s)a^{n}(k_0 \cdots k_s)^{-1}  = a^{m}.\]
By the normal form theorem,  \( b = k_sa^{n}k_s^{-1} \in C\).  Since \(C\) is 2-ended, there is \(c \in C\) and \(P_1, P_2, P_3, P_4\) such that \(a^{P_1} = c^{P_2}\) and \(b^{P_3} =  c^{P_4}\). Let \(K = k_0\cdots k_{s-1}\). Then we have
\begin{equation}\label{eq: second amalgamated product of balanced}
    K k_sa^{P_1 P_3n}k_s^{-1} K^{-1} = a^{P_1P_3m}
\end{equation}
Let's focus on the left-hand side only, conjugating it by \(K\). We have
\[ k_s c^{P_2P_3 n} k_s^{-1}  = k_s a^{P_1P_3 n} k_s^{-1}= b^{P_1P_3}  = c^{P_1P_4}.\]
Since \(k_s\) belongs to either \(A\) or \(B\), all the elements of the above series of equations are in one between \(A,B\), say \(A\). Since \(A\) is balanced, we need to have \(\vert P_2  P_3 n \vert = \vert P_1 P_4 \vert\). Thus, up to possibly substituting \(n\) with \(-n\), we can write the left-hand-side of Equation \eqref{eq: second amalgamated product of balanced} as \(K c^{P_2 P_3 n} K^{-1}\). Now, applying the equality \(a^{P_1} = c^{P_2}\) to the right-hand-side of Equation \eqref{eq: second amalgamated product of balanced}, we have 
\[Kc^{P_1P_4}K^{-1}=K c^{P_2 P_3 n} K^{-1} = c^{P_2 P_3 m}.\]
We are now done by induction hypothesis.
\end{proof}

By applying repeatedly the previous lemma, we obtain the following corollary.

\begin{corollary}\label{corollary: tree product and balance}
If $G$ is a balanced-2-decomposable group such that the underlying graph is a tree, then $G$ is balanced.
\end{corollary}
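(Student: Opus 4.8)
The plan is a straightforward induction on the number of geometric edges of the underlying graph, applying Lemma~\ref{lemma: amalgam and balance} at each step (this is the ``applying repeatedly the previous lemma'' alluded to in the statement). Write $G = \pi_1(\mc{G})$ for a graph of groups $\mc{G} = (\Gamma,\{G_v\},\{G_e\},\{\phi_{e^{\pm}}\})$ with $\Gamma$ a finite tree, every edge group $G_e$ $2$-ended, and every vertex group $G_v$ balanced. If $\Gamma$ has no edge then $G = G_v$ for the unique vertex $v$, and this is balanced by hypothesis, giving the base case.

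For the inductive step, since $\Gamma$ is a finite tree with at least one edge it has a leaf vertex $v$, incident to a single geometric edge; fix an orientation $e$ with $e^{+} = v$. Let $\Lambda$ be the subgraph of $\Gamma$ obtained by deleting $v$ together with $e$ and $\bar e$. Deleting a leaf from a finite tree leaves a finite tree with $V(\Lambda) = V(\Gamma)\setminus\{v\}$, so $\mc{G}|_{\Lambda}$ is again a graph of groups with underlying graph a tree, whose edge groups are $2$-ended and whose vertex groups are balanced, and with one fewer geometric edge than $\mc{G}$. By the induction hypothesis, $\pi_1(\mc{G}|_{\Lambda})$ is balanced.

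It then remains to realize $G$ as an amalgam over $G_e$. Since $\Gamma$ is a tree, every stable letter is trivial in $\pi_1(\mc{G})$, and peeling off the leaf $v$ yields the standard identification $\pi_1(\mc{G}) \cong \pi_1(\mc{G}|_{\Lambda}) \ast_{G_e} G_v$, where $G_e$ is included in $G_v$ via $\phi_{e^{+}}$ and in $\pi_1(\mc{G}|_{\Lambda})$ via $\phi_{e^{-}}$ followed by the inclusion $G_{e^{-}} \hookrightarrow \pi_1(\mc{G}|_{\Lambda})$; this is immediate from the normal form theorem (Theorem~\ref{thm: normal form thm}) together with the embedding of subgraphs of groups, and is in any case classical (see \cite{SerreTrees}). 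Since $G_e$ is $2$-ended it is infinite virtually cyclic, so Lemma~\ref{lemma: amalgam and balance} applies with $C = G_e$, $A = \pi_1(\mc{G}|_{\Lambda})$ and $B = G_v$: both $A$ and $B$ are balanced, hence so is $G = \pi_1(\mc{G})$. The only point that deserves a word of justification is the leaf-peeling identification $\pi_1(\mc{G}) \cong \pi_1(\mc{G}|_{\Lambda}) \ast_{G_e} G_v$; the rest is purely bookkeeping, so I do not expect any real obstacle here.
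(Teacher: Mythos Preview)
Your proof is correct and is exactly the argument the paper intends: the paper's proof consists solely of the remark ``By applying repeatedly the previous lemma,'' and your induction on the number of edges of the tree, peeling off a leaf and invoking Lemma~\ref{lemma: amalgam and balance} at each step, is precisely that. The leaf-peeling identification $\pi_1(\mc{G}) \cong \pi_1(\mc{G}|_{\Lambda}) \ast_{G_e} G_v$ is indeed standard Bass--Serre theory, so there is no gap.
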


It is straightforward to check that HNN extensions of balanced groups are not balanced in general: simply consider $BS(2,3)$ as the HNN extension $\langle a,t\mid ta^2t^{-1}=a^3\rangle\cong\langle a\rangle\ast_{ta^2t^{-1}=a^3}$. 

To finish this subsection we include results that give sufficient conditions for an HNN extension over a balanced group to be balanced. We stress that these results are modified versions of \cite[Proposition 6.3]{button2015balanced} and \cite[Theorem 6.4]{button2015balanced}. They have been modified as to allow torsion.

\begin{prop} \label{prop: balance and absence of conjugation}
Let \(H\) be a balanced group, \(A, B \leq H\) be virtually cyclic subgroups and \(\phi\colon A \to B\) be a isomorphism. Let \(G = H \ast_{\phi}\). Then,
\begin{enumerate}
    \item If $g\in H$ but no power of $g$ is conjugate in $H$ into $A\cup B$ then $g$ is still balanced in $G$.
    \item If $A$ and $B$ are non-commensurable in $H$, then $G$ is also a balanced group. 
\end{enumerate}
\end{prop}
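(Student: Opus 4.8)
The plan is to exploit the action of $G = H\ast_\phi$ on its Bass--Serre tree $T$, which is a single-edge graph of groups (one vertex, one loop). As in the proof of Lemma~\ref{lemma: amalgam and balance}, given $g$ of infinite order with $hg^nh^{-1}=g^m$, the hyperbolic/elliptic dichotomy on $T$ handles the hyperbolic case immediately: translation length is a conjugacy invariant satisfying $\ell_G(g^n)=|n|\ell_G(g)$, so if $g$ acts hyperbolically then $|n|=|m|$. So in both items we may assume $g$ acts elliptically, hence a conjugate of $g$ lies in $H$ (the unique vertex group); replacing $g$ by this conjugate and $h$ by the corresponding conjugate, we may assume $g\in H$ and $hg^nh^{-1}=g^m$ in $G$.

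For item (1), suppose in addition that no power of $g$ is conjugate in $H$ into $A\cup B$. Write $h$ in reduced normal form for the HNN extension, $h = h_0 t^{\epsilon_1} h_1 \cdots t^{\epsilon_s} h_s$. Then $h g^n h^{-1} = g^m$ with both sides reduced relative to the vertex group; by the normal form / Britton's lemma type argument (Theorem~\ref{thm: normal form thm}), if $s\geq 1$ there must be a pinch, which forces $g^n$ (or a conjugate of a power of it) to land in $\phi_{e^+}(G_e)$, i.e.\ into $A$ or $B$ after conjugating within $H$ — contradicting the hypothesis. Hence $s=0$, so $h\in H$ and $hg^nh^{-1}=g^m$ already holds in $H$; since $H$ is balanced, $|n|=|m|$.

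For item (2), by Lemma~\ref{lemma: finite index and balance} and item (1) it suffices to treat the case where some power of $g$ is conjugate in $H$ into $A\cup B$; after conjugating we may assume $g\in A$ (or $g\in B$, symmetric) and, passing to powers, arrange $g\in A$ with $hg^nh^{-1}=g^m$ still holding. Now induct on the reduced length $s$ of $h = h_0 t^{\epsilon_1}h_1\cdots t^{\epsilon_s}h_s$. The base case $s=0$ gives $h\in H$ and balancedness of $H$ finishes. For the inductive step, as in Lemma~\ref{lemma: amalgam and balance} one raises everything to a large power $T$ so that $t^{\epsilon_s} g^{Tn} t^{-\epsilon_s}$ is forced by the normal form theorem to lie in $H$ — in fact, since $g\in A$ (or $B$), $t^{\epsilon_s}$ conjugates a power of $g$ between $A$ and $B$. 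This is where non-commensurability enters: $t g^{k} t^{-1} \in \phi(A)=B$ for suitable $k$, and using that $A,B$ are virtually cyclic one gets an element $c$ in whichever of $A,B$ the innermost conjugation lands, with $h_s c^{n'} h_s^{-1} = c^{m'}$ inside $H$ (balancedness of $H$ controls the exponent, so $|n'|=|m'|$ and the conjugation descends cleanly), allowing us to rewrite $h g^{n} h^{-1} = g^{m}$ as $h' c^{n''} (h')^{-1} = c^{m''}$ with $h'=h_0\cdots h_{s-1}$ of reduced length $\leq s-1$ and $|n''|\neq|m''|$ still. The non-commensurability of $A$ and $B$ in $H$ is exactly what guarantees that the pinching cannot force $g$ to "escape" to the other side in a way that never terminates; combined with the induction it yields the contradiction.

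The main obstacle is the inductive bookkeeping in item (2): one must be careful that raising to powers and replacing $g$ by commensurable elements does not silently turn $|n|\neq|m|$ into $|n|=|m|$, and that the reduction of reduced length by one is genuine (i.e.\ the pinch really does occur at $h_s$ rather than elsewhere). The role of non-commensurability is precisely to rule out the "loop" scenario where $t^{\epsilon}$ repeatedly moves a power of $g$ back and forth between $A$ and $B$ without ever allowing a pinch that shortens $h$; making this rigorous is the technical heart of the argument and should parallel closely the induction already carried out in Lemma~\ref{lemma: amalgam and balance}, with $C$ replaced by $A\cup B$ and the amalgam replaced by the HNN structure.
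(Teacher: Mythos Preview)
Your argument for item (1) is correct and matches the paper's. For item (2), your overall strategy---induct on the normal-form length of the conjugator using Britton's lemma---is the same one the paper uses, but your description of the inductive step and of where non-commensurability enters is incorrect in a way that matters. (Also, the citation of Lemma~\ref{lemma: finite index and balance} is spurious; only item (1) is needed to reduce to $g\in A\cup B$.)

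Concretely: after one pinch the equation does \emph{not} become $h' c^{n''}(h')^{-1}=c^{m''}$ with the same $c$ on both sides. If $g\in A$ and the pinch $t^{\epsilon_s}h_s g^{kn} h_s^{-1}t^{-\epsilon_s}$ goes through, the inner element becomes $\phi(g)^{\pm kn}\in B$, while the right-hand side stays $g^{km}\in A$; so any induction hypothesis must allow the inner element and the target to lie in different members of $\{A,B\}$. More importantly, non-commensurability is not what guarantees termination or ``rules out a loop'': the length drops by one at every pinch, so termination is automatic. The actual dichotomy at each step is: either (a) $h_s$ conjugates the current inner element (say in $A$) back into $A$, so $\epsilon_s$ has the right sign for a pinch, $t$ carries it to $B$, and the length drops with the exponent ratio $|n|/|m|$ preserved; or (b) $h_s$ conjugates an infinite-order element of $A$ into $B$, which \emph{directly exhibits} $|A^{h_s}\cap B|=\infty$, i.e.\ commensurability. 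The paper runs this as a contrapositive: assuming $G$ unbalanced, walk through the reductions; if branch (b) never occurs then the exponent ratio survives to length $0$ and balancedness of $H$ forces $|n|=|m|$, a contradiction, so branch (b) must occur at some step and $A,B$ are commensurable. Your write-up has the two roles swapped, which is why the inductive bookkeeping you flag as ``the main obstacle'' does not close.
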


\begin{proof}
Suppose \(g\) was not balanced in \(G\). Hence there is \(h\in G-H\) such that \(hg^{p} h^{-1} = g^{q}\) for some \(\vert p \vert \neq \vert q \vert\). Since \(h \in G-H\), we can write \( h = h_1t^{\varepsilon_1}\ldots h_{r-1}t^{\varepsilon_r}h_r\) in reduced form. By assumption \(h_r g h_r^{-1}\) does not belong to \(A\) nor \(B\), and hence \(h g^{q} h^{-1}\) cannot represent an element of \(H\). Thus, \(h \in H\) and since \(H\) is balanced \(\vert q \vert = \vert p \vert\).

For the second item, we only need to check the balancedeness of elliptic elements in $G$, since a translation length argument similar to that of Lemma \ref{lemma: amalgam and balance} rules out unbalancedeness of hyperbolic elements. Thus, if $G$ is unbalanced, by the first item there must exist an unbalanced infinite order element $h\in H$ such that some power of $h$ can be conjugated into $A\cup B$. Therefore, we can assume without loss of generality that $h\in A\cup B$. Assume that $h=a\in A$. Since $a$ is unbalanced, there is some $g\in G$ such that $ga^ig^{-1}=a^j$ with $|i|\neq |j|$. Let $g=h_1t^{\varepsilon_1}\ldots h_rt^{\varepsilon_r}$ be the reduced form expression in $G$. Since $gh^ig^{-1}=h^j$ has normal form length $1$, there must exist some possible reduction in $(h_1t^{\varepsilon_1}\ldots h_rt^{\varepsilon_r})h^i(h_1t^{\varepsilon_1}\ldots h_rt^{\varepsilon_r})^{-1}$. There are two possible ways that this could happen: either $\varepsilon_r=1$ and $h_rh^ih_r^{-1}\in A$ or $\varepsilon_r=-1$ and  $h_rh^ih_r^{-1}\in B$. If the latter occurs, then the proof is complete, as $h_rh^ih_r^{-1}$ is an infinite order element in $A^{h_r}\cap B$. Assume now that the former case occurs. Since $A$ is a 2-ended balanced group, there must exist $k$ such that $h_ra^{ik}h_r^{-1}=a^{\pm ik}$. Therefore, $t^{\varepsilon_r}h_ra^{ik}h_r^{-1}t^{-\varepsilon_r}=ta^{\pm ik}t^{-1}=b^{\pm ik}$. Again, as before, we have two possibilities: either $h_{r-1}b^{\pm ik}h_{r-1}^{-1}$ belongs in $B$ and $\varepsilon_{r-1}=-1$ or $h_{r-1}b^{\pm ik}h_{r-1}^{-1}$ belongs in $A$ and $\varepsilon_{r-1}=1$. If the latter occurs, the proof is complete. If the former occurs, since $B$ is a 2-ended balanced group, then $h_{r-1}b^{\pm ikk'}h_{r-1}^{-1}=b^{\pm ikk'}$ for some $k'$. We can continue performing reductions in the expression of $ga^ig^{-1}$ and at each step we have the same dichotomy where either the proof is complete or we can continue reducing. Note that at some point of the reduction we obtain $h_i$ such that $A
^{h_i}\cap B$ or $A\cap B^{h_i}$ is infinite. Indeed, otherwise for some $K\neq 0$ the equality $ga^{Ki}g^{-1}=a^{Kj}$ would hold for $|Ki|=|Kj|$, contradicting the assumption. \end{proof}

\begin{corollary}\label{corollary: HNN and balance}
Let $G$ be an HNN extension of the balanced group $H$ with stable letter $t$ and 2-ended associated subgroups $A$ and $B$ of $H$. Let $a\in A,b\in B$ be infinite order elements such that $tat^{-1}=b$. Moreover, suppose that there is $h\in H$ conjugating a power of $a$ to a power of $b$, so that $ha^ih^{-1}=b^j$. Then $G$ is balanced if and only if for every pair of elements $a,b$ as above we have $|i|=|j|$.
\end{corollary}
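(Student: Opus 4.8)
The plan is to obtain both implications from Proposition~\ref{prop: balance and absence of conjugation} and its proof, using repeatedly the elementary remark that, because $tat^{-1}=b$, a relation $ha^ih^{-1}=b^j$ with $h\in H$ is the same as $(t^{-1}h)a^i(t^{-1}h)^{-1}=a^j$, and dually. The forward direction is then immediate: if $G$ is balanced and $a,b,h,i,j$ are as in the statement, then since $b=tat^{-1}$ we have $b^j=ta^jt^{-1}$, hence $ha^ih^{-1}=ta^jt^{-1}$ and therefore $(t^{-1}h)a^i(t^{-1}h)^{-1}=a^j$; as $a$ has infinite order and $G$ is balanced, $|i|=|j|$.

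For the converse, assume $|i|=|j|$ holds for every pair $a,b$ as in the statement. If $A$ and $B$ are non-commensurable in $H$ we are done by Proposition~\ref{prop: balance and absence of conjugation}(2), so we may assume $A$ and $B$ are commensurable in $H$. A translation length argument on the Bass--Serre tree of $G$ (as in Lemma~\ref{lemma: amalgam and balance}) disposes of hyperbolic elements, and Proposition~\ref{prop: balance and absence of conjugation}(1), together with the fact that an infinite order element and each of its powers are simultaneously balanced or unbalanced, reduces us to showing that every infinite order $x$ conjugate into $A\cup B$ is balanced in $G$. Replacing $x$ by a conjugate we may assume $x\in A\cup B$, and since $t^{-1}xt\in A$ whenever $x\in B$, in fact we may assume $x\in A$. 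Suppose for contradiction that $gx^ig^{-1}=x^j$ with $g\in G$ and $|i|\neq|j|$, and write $g$ in reduced form.

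Now run the reduction of $gx^ig^{-1}$ exactly as in the proof of Proposition~\ref{prop: balance and absence of conjugation}(2): at each stage one conjugates a power of $x$, respectively of $txt^{-1}$, by a syllable $h_k\in H$, and by the normal form theorem (Theorem~\ref{thm: normal form thm}) the result lies in $A$ or in $B$; balancedness of $A$, $B$ and $H$ then lets one track the relevant exponent up to sign after passing to a suitable power. The junctions at which the argument of Proposition~\ref{prop: balance and absence of conjugation}(2) halts — because a power of $x$ gets conjugated into $B$, or a power of $txt^{-1}$ into $A$ — can genuinely occur here, as $A$ and $B$ are commensurable; but in each such case, after taking a further power, one obtains a relation of the form $h_k x^M h_k^{-1}=(txt^{-1})^N$ with $h_k\in H$ (possibly after the rewriting of the opening remark), so the hypothesis applied to the pair $(x,txt^{-1})$ forces $|M|=|N|$, which is exactly what is needed to continue the bookkeeping with the tracked exponent unchanged up to sign. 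Carrying this through to the outermost syllable $g_0\in H$ expresses $x^{Ki}$ as a conjugate of $x^{\pm Kj}$ by $g_0$ for some $K\neq 0$, and balancedness of $H$ gives $|Ki|=|Kj|$, contradicting $|i|\neq|j|$. Hence $x$, and therefore $G$, is balanced.

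The main obstacle is the exponent bookkeeping in the previous paragraph: one must check that passing through a stable letter and then through an $H$-syllable multiplies the tracked exponent only by $\pm1$ at every junction — including the "commensurable" ones, where this is precisely what the hypothesis provides — and that the finite normal subgroups sitting inside the $2$-ended groups $A$ and $B$ do no harm once one passes to suitable powers. All of this machinery is already present in the proof of Proposition~\ref{prop: balance and absence of conjugation}(2); the only new ingredient is to replace the appeal to non-commensurability made there by an appeal to the hypothesis, with balancedness of $H$ supplying the final contradiction.
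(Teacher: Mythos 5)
Your proposal is correct and takes essentially the same route as the paper: the forward direction by rewriting the relation through the stable letter, and the converse by using Proposition~\ref{prop: balance and absence of conjugation} to reduce to infinite-order elements of $A\cup B$ and then reducing the conjugator syllable by syllable via the normal form theorem, invoking the hypothesis on the pair $(a,tat^{-1})$ precisely at the junctions where the argument of Proposition~\ref{prop: balance and absence of conjugation}(2) would otherwise halt. The paper packages this bookkeeping as an induction on the reduced length of the conjugator, but the content is the same as your iterative exponent-tracking.
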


\begin{proof}
One implication is clear, we now show that $G$ is balanced provided that for every $h\in H$ such that $ha^ih^{-1}=b^j$ for some $i,j$ it follows that $|i|=|j|$.

Assume that $G$ is an unbalanced group. Therefore, by the second assertion in the previous proposition, there must exist some $h'\in H$ such that $A\cap h'Bh'^{-1}$ is infinite. Since HNN extensions are defined up to conjugation of the corresponding embedding maps, by conjugating by $h'$ we can assume that $A\cap B$ is infinite in $H$. By the first assertion in the previous proposition, the only elements that can be unbalanced are those $h\in H$ that can be conjugate in $H$ into $A\cup B$. Thus, we can assume without loss of generality that the unbalanced elements in $G$ belong in $A\cup B$. Therefore, if $G$ is unbalanced, we can assume that for some $a\in A$ there is some $g\in G$ such that $ga^ng^{-1}=a^m$ for some $|n|\neq|m|$. 
We will induct on the length of the reduced form of $g$ to show that  $ga^ng^{-1}=a^m$ implies $|n|=|m|$, obtaining a contradiction.

Let $g=h_0t^{\varepsilon_1}h_1\ldots t^{\varepsilon_r}h_r$ be the reduced expression of $g$. Let us say that $r$ denotes the reduced form length of $g$. Assume that $r=0$. That is to say, $g\in H$. Since $H$ is balanced, we have \(\vert n \vert  = \vert m \vert\).

Assume now that the claim holds for elements of reduced form length \(r-1\), and let  $g$ of reduced form length \(r\) be such that $ga^ng^{-1}=a^m$. We denote by $b\in B$ the element such that $tat^{-1}=b$. Note that if the equation $ga^ng^{-1}=a^m$ holds in $G$, then for every $T$ we have that $ga^{Tn}g^{-1}=a^{Tm}$ for every $T>0$.  Since the element $ga^ng^{-1}=a^m$ belongs in $H$, by the normal form theorem, $ga^ng^{-1}$ must admit some reduction in its reduced form. There are two ways that this reduction can occur: either $\varepsilon_r=1$ and $h_ra^nh_r^{-1}$ belongs in $A$ or $\varepsilon_1=-1$ and $h_ra
^nh_r^{-1}$ belongs in $B$. Note that in the former case, since $A$ is 2-ended and balanced, there must exist some $k$ such that $h_ra^{kn}h_r^{-1}=a^{\pm kn}$. Therefore, $t_r^{\varepsilon}h_ra^{kn}h_r^{-1}t_r^{-\varepsilon_r}=b^{\pm kn}$. In the latter case we have that $h_ra^nh_r^{-1}=b'\in B$. Since $B$ is a 2-ended group, there must exist $l_1,l_2$ such that $(b')^{l_1}=b^{l_2}$. Thus, $h_ra^{nl_1}h_r^{-1}=(b')^{l_1}=b^{l_2}$. By assumption, we must have that $|nl_1|=|l_2|$. Therefore, in the latter case we have that $t^{-1}h_ra^{nl_1}h_r^{-1}t=t^{-1}b^{\pm l_2}t=a^{\pm l_2}=a^{\pm nl_1}$. In both cases, we use the induction step to conclude $|kn|=|km|$ or $|l_1n|=|l_1m|$ respectively. In particular, since \(k \neq 0 \neq l_1\), we conclude \(\vert n \vert = \vert m \vert\).
\end{proof}

\section{Hierarchically hyperbolic groups}\label{subsection: basics of hhs}

\begin{defn}\label{HHS_definition}
A $q$--quasigeodesic metric space $(\mathcal{X},d_\mathcal{X})$ is \emph{hierarchically hyperbolic} if there exist $\delta\geqslant 0$, an index set $\mathfrak{S}$, 
and a set $\{\mathcal{C}W\mid W\in\mathfrak{S}\}$ of $\delta$--hyperbolic spaces $(\mathcal{C}U,d_U)$, such that the following conditions are satisfied:
\begin{enumerate}
\item {\bf (Projections)}\label{axiom1} There is a set $\{\pi_W\colon\mathcal{X}\to 2^{\mathcal{C}W}\mid W\in\mathfrak{S}\}$ of projections that send points in $\mathcal{X}$ 
to sets of diameter bounded by some $\xi\geqslant 0$ in the hyperbolic spaces $\mathcal{C}W\in\mathfrak{S}$. Moreover, there exists $K$ so that all $W\in\mathfrak{S}$, the coarse map $\pi_{W}$ is $(K,K)$--coarsely lipschitz and $\pi_W(\mathcal{X})$\footnote{If $A\subseteq \mathcal{X}$, by $\pi_U(A)$ we mean $\bigcup_{a\in A}\pi_U(a)$.} is $K$--quasiconvex in $\mathcal{C}W$.
\item {\bf (Nesting)} The index set $\mathfrak{S}$ is equipped with a partial order $\sqsubseteq$ called \emph{nesting}, and either $\mathfrak{S}$ is empty or it contains a unique $\sqsubseteq$--maximal element. When $V\sqsubseteq W$, $V$ is nested into $W$. For each $W\in\mathfrak{S}$, $W\sqsubseteq W$, and with $\mathfrak{S}_W$
we denote the set of all $V\in\mathfrak{S}$ that are nested in $W$.
For all $V,W\in\mathfrak{S}$ such that $V\sqsubsetneq W$ there is a subset $\rho_W^V\subseteq \mathcal{C}W$ with diameter at most $\xi$, and a map 
$\rho_V^W\colon \mathcal{C}W\to 2^{\mathcal{C}V}$.
\item\label{A3} {\bf (Orthogonality)} The set $\mathfrak{S}$ has a symmetric and antireflexive relation $\perp$ called \emph{orthogonality}. Whenever $V\sqsubseteq W$ and 
$W\perp U$, then $V\perp U$ as well. For each $Z\in \mathfrak{S}$ and each $U\in\mathfrak{S}_Z$ for which $\{V\in\mathfrak{S}_Z\mid V\perp U\}\neq\emptyset$, 
there exists $\cont_\perp^ZU\in \mathfrak{S}_Z\setminus\{Z\}$ such that whenever $V\perp U$ and $V\sqsubseteq Z$, then $V\sqsubseteq \cont_\perp^ZU$.
\item {\bf (Transversality and Consistency)}\label{HHS_definition_4}
If $V,W\in\mathfrak{S}$ are not orthogonal and neither is nested into the other, then they are transverse: 
$V\pitchfork W$. There exists $\kappa_0\geqslant 0$ such that if $V\pitchfork W$, then there are sets $\rho_W^V\subseteq \mathcal{C}W$ and 
$\rho_V^W\subseteq \mathcal{C}V$, each of diameter at most $\xi$, satisfying
\begin{equation*}\label{consistent_transversal}
\min\bigl\{d_W(\pi_W(x),\rho_W^V),d_V(\pi_V(x),\rho_V^W)\bigr\}\leqslant \kappa_0,\qquad \forall\ x\in\mathcal{X}.
\end{equation*}
Moreover, for $V\sqsubseteq W$ and for all $x\in\mathcal{X}$ we have that
\begin{equation*}\label{consistent_nesting}
\min\bigl\{d_W(\pi_W(x),\rho_W^V),\diam_{\mathcal{C}V}(\pi_V(x)\cup\rho_V^W(\pi_W(x)))\bigr\}\leqslant \kappa_0.
\end{equation*}
In the case of $V\sqsubseteq W$, we have that $d_U(\rho^V_U,\rho^W_U)\leqslant \kappa_0$ whenever $U\in\mathfrak{S}$ is such that either 
$W\sqsubsetneq U$, or $W\pitchfork U$ and $U\not\perp V$.
\item {\bf (Finite complexity)} There is a natural number $n\geqslant0$, the complexity of $\mathcal{X}$ with respect to $\mathfrak{S}$, such that any set of 
pairwise $\sqsubseteq$--comparable elements of $\mathfrak{S}$ has cardinality at most $n$.
\item {\bf (Large links)} There exist $\lambda\geqslant 1$ and $E\geqslant \max\{\xi,\kappa_0\}$ such that, given any $W\in\mathfrak{S}$ and $x,x'\in\mathcal{X}$, 
there exists $\{T_i\}_{i=1,\dots,\lfloor N\rfloor}\subset \mathfrak{S}_W\setminus\{W\}$ such that for all $T\in\mathfrak{S}_W\setminus\{W\}$ either $T\in\mathfrak{S}_{T_i}$ for some $i$, or $d_T(\pi_T(x),\pi_T(x'))< E$, where $N=\lambda d_W(\pi_W(x),\pi_W(x'))+\lambda$. 
Moreover, $d_W(\pi_W(x),\rho_W^{T_i})\leqslant N$ for all $i$.
\item {\bf (Bounded geodesic image)} For all $W\in\mathfrak{S}$, all $V\in\mathfrak{S}_W \setminus \{W\}$ and all geodesics $\gamma$ of $\mathcal{C}W$, either $\diam_{\mathcal{C}V}(\rho_V^W(\gamma))\leqslant E$ or $\gamma\cap \mathcal{N}_E(\rho_W^V)\neq \emptyset$.
\item {\bf (Partial realization)} There is a constant $\alpha$ satisfying: let $\{V_j\}$ be a family of pairwise orthogonal elements of $\mathfrak{S}$, ad let
$p_j\in\pi_{V_j}(\mathcal{X})\subseteq \mathcal{C}V_j$. Then there exists $x\in\mathcal{X}$ such that
\begin{itemize}
\item $d_{V_j}\bigl(\pi_{V_j}(x),p_j\bigr)\leqslant \alpha$ for all $j$;
\item for all $j$ and all $V\in\mathfrak S$ such that $V\pitchfork V_j$ or $V_j\sqsubseteq V$ we have $d_V(\pi_V(x),\rho^{V_j}_V)\leq\alpha$.
\end{itemize} 
\item {\bf (Uniqueness)} For each $\kappa\geqslant 0$ there exists $\theta_u=\theta_u(\kappa)$ such that if $x,y\in\mathcal{X}$ and $d(x,y)\geqslant \theta_u$, then there exists $V\in\mathfrak{S}$ such that $d_V(x,y)\geqslant\kappa$.
\end{enumerate}
The inequalities of the fourth axiom are called \emph{consistency inequalities}.
\end{defn}

\begin{rmk}\label{rmk: HHS are QI invariant}
Being a hierarchically hyperbolic space is a quasi-isometric invariant property. That is, if $(\mathcal{X},\mathfrak{S})$  hierarchically hyperbolic and \(q \colon \mc{X} \to \mc{Y}\) is a quasi-isometry, then $\mathcal{Y}$ is a hierarchically hyperbolic space, and the hierarchical structure coincides with the one of \(\mc{X}\). Indeed, if \(\bar{q}\) is a quasi-inverse of \(q\), define all the hyperbolic spaces as the one of \(\mc{X}\) and the projections as \(\pi_U\circ \bar{q}\). Then checking the axioms is a tedious (but straightforward) work. We will see in the following sections that this is not the case for hierarchically hyperbolic groups (Definition~\ref{def:hhg}). The issue boils down to the fact that the quasi-inverse \(\bar{q}\) might not be equivariant. 
\end{rmk}

\begin{defn}[\bf Hieromorphism]\label{def:HHS_hieromorphism}
Let $(\mathcal{X},\mathfrak{S})$ and $(\mathcal{X}',\mathfrak{S}')$ be hierarchically hyperbolic spaces. A \emph{hieromorphism} is a triple 
$\phi=\bigl(\phi,\phi^\diamondsuit,\{\phi^\ast_U\}_{U\in\mathfrak{S}}\bigr)$, where $\phi\colon \mathcal{X}\to\mathcal{X}'$ is a map, 
$\phi^\diamondsuit\colon \mathfrak{S}\to\mathfrak{S}'$ is an injective map that preserves nesting, transversality and orthogonality,
and, for every $U\in\mathfrak{S}$, the maps $\phi^*_U\colon \mathcal{C}U\to\mathcal{C}\phi^\diamondsuit(U)$ are quasi-isometric embeddings with uniform constants. 

Moreover, the following two diagrams coarsely commute (again with uniform constants), for all $U,V\in\mathfrak{S}$ such that $U\sqsubseteq V$ or $U\pitchfork V$:
\begin{equation}\label{coarsely.commuting.diagrams}
\xymatrix{
\mathcal{X}\ar[r]^{\phi}\ar[d]_{\pi_U} & \mathcal{X}'\ar[d]^{\pi_{\phi^\diamondsuit(U)}}\\
\mathcal{C}U\ar[r]_{\phi^*_U}&\mathcal{C}\phi^\diamondsuit(U)
}\qquad\qquad\qquad 
\xymatrix{
\mathcal{C}U\ar[rr]^{\phi^*_U}\ar[d]_{\rho^U_V} && \mathcal{C}\phi^\diamondsuit(U)\ar[d]^{\rho^{\phi^\diamondsuit(U)}_{\phi^\diamondsuit(V)}}\\
\mathcal{C}V\ar[rr]_{\phi^*_V}&&\mathcal{C}\phi^\diamondsuit(V)
}
\end{equation}
\end{defn}

\begin{defn}[\bf Full hieromorphism]\label{def:fullness_definition}
A hieromorphism $\phi\colon(\mathcal{X},\mathfrak{S})\to(\mathcal{X}',\mathfrak{S}')$ is \emph{full} if: 
\begin{enumerate} 
\item there exists $\xi$ such that the maps $\phi^*_U\colon\mathcal{C}U\to\mathcal{C}\phi^\diamondsuit(U)$ are $(\xi,\xi)$--quasi-isometries, for all $U\in\mathfrak{S}$;
\item if $S$ denotes the $\sqsubseteq$--maximal element of $\mathfrak{S}$, then for all $U'\in\mathfrak{S}'$ nested into $\phi^\diamondsuit(S)$ there exists 
$U\in\mathfrak{S}$ such that $U'=\phi^\diamondsuit(U)$.
\end{enumerate}
\end{defn}

\begin{defn}[\bf{Hierarchically hyperbolic group}]\label{def:hhg}
We say that a group $G$ is \emph{hierarchically hyperbolic} if it acts on a hierarchically hyperbolic space $(\mathcal{X},\mathfrak{S})$  satisfying the following conditions:
\begin{enumerate}
\item The action of $G$ on $\mathcal{X}$ is proper and cobounded;
\item \(G\) acts on \(\mc{X}\) by uniform hieromorphisms (i.e the constants involved in Defintion \ref{def:HHS_hieromorphism} are uniform for every $g\in G$), and the action on \(\mf{S}\) has finitely many orbits.
\end{enumerate}
\end{defn}

\begin{rmk}
By definition, if \((G, \mf{S})\) is a hierarchically hyperbolic group and \(g \in G\), multiplication by \(g\) coarsely satisfies the two diagrams of Equation \eqref{coarsely.commuting.diagrams}. However, it is always possible to modify the structure to obtain commutativity on the nose, as described in \cite[Section 2.1]{durham2018corrigendum}. When considering hierarchically hyperbolic groups, we will always assume such equivariance on the nose. 
\end{rmk}

\subsection{Convexity}
In this paper, we will make use of two notions of convexity. The first one, called hierarchical quasiconvexity, heavily relies on the hierarchical structure. For instance, it is not quasi-isometric invariant. For a more precise account, we refer to \cite{russellsprianotran:convexity}.

\begin{defn}[\bf Hierarchical quasiconvexity]
A subset $Y$ of an HHS $(X,\mf{S})$ is hierarchically quasiconvex if there is a function $k:[0,\infty)\to\mathbb{R}$ such that every $\pi_U(Y)$ is $k(0)$--quasiconvex, and any point $x\in X$ with $d_U(x,Y)\leq r$ for all $U\in\mf{S}$ satisfies $d_X(x,Y)\leq k(r)$.
\end{defn}
Although we will not use this fact, we recall that one of the main motivations of hierarchical quasiconvexity is that hierarchically quasiconvex subsets of an HHS are HHSs themselves \cite[Proposition~5.6]{HHSII}. 

To detect hierarchical quasiconvexity sometimes it is convenient to check a stronger property. 

\begin{defn}[\bf Strong quasiconvexity]
A subset $Y$ of a quasigeodesic space $X$ is said to be \emph{strongly quasiconvex} if there is a function $M:[1,\infty)\to \mathbb{R}$ such that every $\lambda$--quasigeodesic in $X$ with endpoints in $Y$ stays $M(\lambda)$--close to $Y$.
\end{defn}

\begin{thm}[{\cite[Theorem~6.3]{russellsprianotran:convexity}}]\label{thm: RST strong quasiconvex}
Let \((G, \mf{S})\) be a hierarchically hyperbolic group and \(Y\subseteq G\) be a subset. Then if \(Y\) is strongly quasiconvex, it is hierarchically quasiconvex, where the constants determine each other. 
\end{thm}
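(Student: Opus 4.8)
The statement is \cite[Theorem~6.3]{russellsprianotran:convexity}, so the plan is to recall the two clauses of hierarchical quasiconvexity and indicate how strong quasiconvexity feeds into each. Unwinding the definition, I would need to establish: (a) each $\pi_U(Y)$ is uniformly quasiconvex in $\mathcal{C}U$; and (b) a ``coarse retraction'' estimate, namely a function $k$ so that any $x\in G$ with $d_U(x,Y)\le r$ for all $U\in\mf{S}$ satisfies $d_G(x,Y)\le k(r)$.

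For (a) I would argue as follows. Fix $a,b\in Y$ and $U\in\mf{S}$, and let $\gamma$ be a $\mathcal{C}U$--geodesic between $\pi_U(a)$ and $\pi_U(b)$. Since $(G,\mf{S})$ is an HHG there is a uniform hierarchy path $\sigma$ in $G$ from $a$ to $b$, and by strong quasiconvexity $\sigma\subseteq\mathcal{N}_M(Y)$ with $M=M(\lambda)$, $\lambda$ the hierarchy-path quasigeodesic constant. By definition of hierarchy path, $\pi_U(\sigma)$ is a uniform unparametrised quasigeodesic of the $\delta$--hyperbolic space $\mathcal{C}U$ with the same endpoints as $\gamma$, so stability of quasigeodesics places $\gamma$ in a uniform neighbourhood of $\pi_U(\sigma)$; moreover $\pi_U(\sigma)\subseteq\mathcal{N}_{KM+K}(\pi_U(Y))$ because $\pi_U$ is $(K,K)$--coarsely Lipschitz and $\sigma\subseteq\mathcal{N}_M(Y)$. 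Chaining these inclusions shows $\gamma$ lies uniformly close to $\pi_U(Y)$.

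For (b), which I expect to be the main obstacle, I would proceed via the Realization Theorem and the Distance Formula for HHSs. Given $x$ with $d_U(x,Y)\le r$ for all $U$, use part (a): each $\pi_U(Y)$ is quasiconvex, so let $p_U\in\pi_U(Y)$ be a coarse closest-point projection of $\pi_U(x)$, whence $d_U(\pi_U(x),p_U)\le r$. Since the tuple $(p_U)_U$ is uniformly close to the consistent tuple $(\pi_U(x))_U$, it is itself uniformly consistent, so Realization supplies $z\in G$ with $d_U(z,p_U)$, hence $d_U(z,x)$, uniformly bounded for every $U$; the Distance Formula then bounds $d_G(x,z)$ by a constant depending only on $r$ and the HHS data. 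The delicate point — and the step I expect to cost the most work — is to see that $z$ may be taken uniformly close to $Y$: here strong quasiconvexity re-enters, through the fact that hierarchy paths with endpoints in $Y$ stay in a uniform neighbourhood of $Y$ together with the coarse monotonicity of coordinate projections along such paths, which together show that the ``gate tuple'' $(p_U)_U$ is realised, up to bounded error, by an honest point of $Y$. Combining, $d_G(x,Y)\le d_G(x,z)+\dhaus(z,Y)=:k(r)$, and by construction each constant above determines the others, giving the asserted dependence. Full details are in \cite{russellsprianotran:convexity}.
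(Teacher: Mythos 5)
The paper offers no proof of this statement: it is quoted verbatim from \cite{russellsprianotran:convexity}, where it follows from their hierarchy-path characterization of hierarchical quasiconvexity — hierarchy paths are uniform quasigeodesics, so strong quasiconvexity forces every hierarchy path with endpoints on \(Y\) to lie in a uniform neighbourhood of \(Y\), and the characterization then yields hierarchical quasiconvexity. Your clause (a) is correct and is essentially a fragment of that argument.

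The genuine gap is in clause (b). The detour through Realization and the Distance Formula does not reduce the problem: the realization point \(z\) has exactly the property you started with (every coordinate \(\pi_U(z)\) uniformly close to \(\pi_U(Y)\)), so bounding \(d_G(z,Y)\) is the statement to be proved, not a smaller step. Everything therefore rests on the claim that the tuple \((p_U)_{U}\) is coarsely realized by an honest point of \(Y\), and the justification you offer — hierarchy paths with endpoints in \(Y\) stay near \(Y\), plus coarse monotonicity of coordinate projections along such paths — is not an argument for it. Monotonicity produces, for each single domain \(U\), points of \(Y\) whose \(U\)-coordinate approximates \(p_U\); what is needed is a single point of \(Y\) whose coordinates simultaneously approximate \(p_U\) for \emph{all} \(U\), i.e.\ the existence of a coarse gate onto \(Y\). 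Gates are constructed in the literature precisely for hierarchically quasiconvex subsets, so appealing to a ``gate tuple'' here is circular; for a merely strongly quasiconvex \(Y\) this simultaneous approximation is the technical heart of the cited result, and it is exactly what the hierarchy-path characterization in \cite{russellsprianotran:convexity} is designed to supply. As written, your sketch correctly locates the difficulty but does not contain the idea that resolves it.
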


 A special case of strongly quasiconvex subsets is given by peripheral subgroups of relatively hyperbolic groups. 
\begin{lemma}[{{\cite[Lemma~4.15]{DrutuSapir}}}]\label{lem: peripherals are strongly QC}
Let \(P\) be a peripheral subgroup in the relatively hyperbolic group \(G\). Then \(P\) is strongly quasiconvex. 
\end{lemma}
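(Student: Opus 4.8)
The statement is Lemma~4.15 of \cite{DrutuSapir}, so one may simply cite it; here is a sketch of how the argument goes. The plan is to use the asymptotic-cone characterisation of relative hyperbolicity: $G$ is hyperbolic relative to $\mathcal{P}$ exactly when $\mathrm{Cay}(G)$ is \emph{asymptotically tree-graded} with respect to the family $\mathcal{A}$ of all left cosets of the groups in $\mathcal{P}$, meaning that in every asymptotic cone of $G$ the ultralimits of the rescaled sequences from $\mathcal{A}$ that stay at bounded distance from the basepoint are the pieces of a tree-graded structure. I will use two facts about tree-graded spaces: distinct pieces meet in at most one point, and, consequently, \emph{every injective path whose two endpoints lie in a common piece is entirely contained in that piece} (this follows from the fact that each simple loop in a tree-graded space is contained in a single piece).

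The plan is then to argue by contradiction. Suppose $P$ is not strongly quasiconvex: there is $\lambda\ge 1$ and a sequence of $\lambda$-quasigeodesics $q_n$ in $\mathrm{Cay}(G)$ with endpoints in $P$ along which $\sup d(\cdot,P)\to\infty$. After replacing each $q_n$ by a tamer quasigeodesic with the same endpoints (a standard step), and --- this is the delicate part --- after passing to a sub-path $\tilde q_n$ of $q_n$ whose endpoints still lie in $P$ and whose length is comparable to the depth $D_n:=\max d(\tilde q_n,P)\to\infty$ that it attains, one rescales $\mathrm{Cay}(G)$ by $D_n$ and takes an asymptotic cone based along the points realising these depths. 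The rescaled paths $\tilde q_n/D_n$ then converge to a bi-Lipschitz, hence injective, path $\gamma$ in the cone, whose two endpoints lie in the piece $P_\omega=\lim_\omega \tfrac1{D_n}P$ but which reaches distance $1$ from $P_\omega$; this contradicts the tree-graded fact above. Finally, strong quasiconvexity of the subgroup $P\le G$ is just strong quasiconvexity of the coset $eP$, which is what has been established.

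I expect the main obstacle to be the preprocessing step: arranging that the sub-path $\tilde q_n$ has length $O(D_n)$, so that its cone limit is a genuine finite non-degenerate path with endpoints actually on the piece $P_\omega$, rather than drifting off to infinity or collapsing to a point. This is precisely where the asymptotic-tree-graded hypothesis does real work --- concretely, one uses bounded-coset-penetration type control on how $q_n$ enters and leaves the cosets that it meets in order to bound the length of an excursion in terms of its depth. An alternative route that avoids asymptotic cones is to pass to the coned-off Cayley graph $\widehat{G}$, which is $\delta$-hyperbolic: the image of $q_n$ is a quasigeodesic of $\widehat{G}$ with both endpoints at the cone vertex of $P$, hence of bounded $\widehat{G}$-length, so $q_n$ penetrates only boundedly many cosets, and combining this with the undistortion of peripheral cosets and bounded coset penetration again bounds $d(q_n,P)$. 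Either way, the heart of the matter is the Druţu--Sapir structure theory of relatively hyperbolic groups, and for a complete argument I would defer to \cite{DrutuSapir}.
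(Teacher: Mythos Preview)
Your proposal is fine. The paper does not give a proof of this lemma at all: it is stated as a citation of \cite[Lemma~4.15]{DrutuSapir} and used as a black box. Your sketch correctly outlines the asymptotic-cone argument that Dru\c{t}u--Sapir actually run, and you have accurately identified the technical crux (controlling the length of the excursion in terms of its depth so that the rescaled limit is a genuine finite path with endpoints on the piece). Since the paper's own ``proof'' is just the citation, there is nothing further to compare.
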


In the case of hyperbolic spaces, relative hyperbolicity and strong quasi-convexity are intimately related.

\begin{defn}\label{def: almost malnormal collection}
We say that a collection of subgroups $\{H_i\}_{i=1}^n$ of $G$ is almost-malnormal if $H_i\cap gH_jg^{-1}$ is finite unless $i=j$ and $g\in H_i$.
\end{defn}

\begin{thm}[{\cite[Theorem 7.11]{BowditchRelHypGroups}}]\label{thm:bowditch}
Let \(G\) be a hyperbolic group and \(\{H_i\}_{i=1}^n\) be a finite family of subgroups of \(G\). Then $G$ is hyperbolic relative to $\{H_i\}$ if and only if  $\{H_i\}$ is an almost-malnormal family of strongly quasiconvex subgroups.
\end{thm}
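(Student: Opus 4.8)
The plan is to prove the two implications separately, since they are of quite different natures.

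The forward implication is routine and does not even use hyperbolicity of $G$. If $G$ is hyperbolic relative to $\{H_i\}$ then each $H_i$ is strongly quasiconvex by Lemma~\ref{lem: peripherals are strongly QC}. For almost-malnormality I would invoke the standard structural fact that in a relatively hyperbolic group a peripheral subgroup $H_i$ meets a conjugate $gH_jg^{-1}$ in a finite subgroup unless $i=j$ and $g\in H_i$; this is read off directly from the bounded coset penetration property (a path penetrating two distinct peripheral cosets deeply enough must backtrack).

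For the backward implication -- the substantial one -- assume $\{H_i\}$ is almost-malnormal and each $H_i$ is strongly quasiconvex, hence quasiconvex since $G$ is hyperbolic. Fix a finite generating set $S$, let $\Gamma=\mathrm{Cay}(G,S)$ (which is $\delta$-hyperbolic), and form the coned-off graph $\hat\Gamma$ by adjoining to $\Gamma$, for each coset $gH_i$, a new vertex joined by an edge to every element of $gH_i$. The goal is then to verify the two clauses of Bowditch's definition: (i) $\hat\Gamma$ is hyperbolic, and (ii) $\hat\Gamma$ is \emph{fine}, i.e. for every $L$ each edge of $\hat\Gamma$ lies in only finitely many embedded circuits of length at most $L$. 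Once both hold, $G$ acts on the fine hyperbolic graph $\hat\Gamma$ with finite edge-stabilizers, finitely many orbits of edges, and infinite vertex-stabilizers precisely the conjugates of the $H_i$, so $G$ is hyperbolic relative to $\{H_i\}$. Clause (i) is the easier step: the cosets $\{gH_i\}$ form a uniformly quasiconvex family in $\Gamma$ (finitely many $H_i$, each quasiconvex), and coning off a uniformly quasiconvex family of subsets of a hyperbolic space yields a hyperbolic space by the standard cone-off lemma, proved by comparing $\Gamma$-geodesics with $\hat\Gamma$-geodesics via thin triangles.

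The hard part will be clause (ii), and it is the only place almost-malnormality is used. The key lemma to establish is a geometric separation property of the coset family: for every $r$ there is $D(r)$ with $\diam\bigl(\mathcal{N}_r(aH_i)\cap\mathcal{N}_r(bH_j)\bigr)\le D(r)$ whenever $aH_i\ne bH_j$. To prove it I would argue by contradiction: if two distinct cosets had an arbitrarily long coarse intersection, then quasiconvexity forces an honest $\Gamma$-geodesic segment of controlled length to fellow-travel this intersection, and translating that segment to the origin and applying the pigeonhole principle to its finitely many possible $S$-labels produces a common infinite-order element of $H_i$ and of a conjugate of $H_j$, contradicting almost-malnormality (and pinning down that the cosets must coincide). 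Granting the separation property, a circuit of length at most $L$ in $\hat\Gamma$ lifts to at most $L$ pieces -- edges of $\Gamma$ or geodesic excursions through single cosets -- and the separation property together with $\delta$-thinness of triangles in $\Gamma$ bounds how long each excursion can remain in its coset; this determines the combinatorial type of such a circuit through a prescribed edge up to finitely many choices, which is exactly fineness. An alternative packaging of the backward implication is to build the Groves--Manning cusped space by attaching combinatorial horoballs to the cosets and prove directly that it is $\delta$-hyperbolic, but the difficulty of controlling overlaps of the attaching sets is the same, so this does not circumvent the main obstacle.
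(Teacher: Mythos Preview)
The paper does not prove this theorem at all: it is quoted verbatim as \cite[Theorem 7.11]{BowditchRelHypGroups} and used as a black box, so there is no ``paper's own proof'' to compare against. Your sketch is therefore not competing with anything in the paper; it is an independent outline of Bowditch's argument.

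That said, your outline is broadly correct and follows the standard route. The forward direction is fine as stated. For the backward direction, the strategy of coning off the cosets, invoking the cone-off lemma for uniformly quasiconvex families to get hyperbolicity of $\hat\Gamma$, and then deducing fineness from a bounded-overlap (geometric separation) lemma is exactly how this is done. One small caution: in your proposed proof of the separation lemma, the pigeonhole argument you describe produces an element of $a H_i a^{-1}\cap b H_j b^{-1}$ of infinite order, not literally of $H_i\cap gH_jg^{-1}$; you should be explicit that this still contradicts almost-malnormality after conjugating, and that the ``cosets must coincide'' conclusion requires $i=j$ and $a^{-1}b\in H_i$. Also, the step from bounded overlaps to fineness is genuinely where the work lies and is more delicate than your single sentence suggests: one must control how a short $\hat\Gamma$-circuit decomposes into $\Gamma$-segments and coset-penetrations, and bound the total $\Gamma$-length of a lift, not just the number of pieces. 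None of this is wrong in your sketch, but if you intend this as a proof rather than a plan you would need to fill in those quantitative details.
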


To finish we recall one last useful property of peripheral subgroups. 
\begin{lemma}[{\cite[Lemma~4.11]{DrutuSapir}}]\label{lem: cpproj is C. Lipschitz}
Let \(P\) be a peripheral subgroup of the relatively hyperbolic group \(G\). Then the closest point projection on \(P\) is coarsely Lipschitz.\end{lemma}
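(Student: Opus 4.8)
The plan is to leverage the relatively hyperbolic structure of $G$ rather than just the strong quasiconvexity of $P$ (Lemma~\ref{lem: peripherals are strongly QC}). The geometric input I would invoke is twofold and standard for relatively hyperbolic groups: (i) closest-point projection $\mathfrak{p}_P\colon\mathrm{Cay}(G)\to P$ is coarsely well defined, i.e.\ the set of nearest points of $P$ to any $x$ has uniformly bounded diameter; and (ii) the cosets $gP$ are \emph{strongly contracting}, that is there is a constant $B=B(G,\{P_i\})$ so that any geodesic $\sigma$ of $\mathrm{Cay}(G)$ with $\sigma\cap\mathcal{N}_B(gP)=\emptyset$ satisfies $\diam_G(\mathfrak{p}_{gP}(\sigma))\leq B$. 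Point (ii) is genuinely stronger than strong quasiconvexity: peripheral cosets are exactly the pieces of the relatively hyperbolic structure --- equivalently, they collapse to points in the hyperbolic coned-off Cayley graph (or Groves--Manning cusped space) --- so a geodesic staying $B$-far from $gP$ makes bounded progress across $gP$; concretely it is an incarnation of Farb's Bounded Coset Penetration property.

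Granting (i) and (ii), coarse Lipschitzness is a routine chaining argument along a geodesic $[x,x']$ of $\mathrm{Cay}(G)$. If $[x,x']\cap\mathcal{N}_B(P)=\emptyset$, then $\diam_G(\mathfrak{p}_P([x,x']))\leq B$ and we are done since $\mathfrak{p}_P(x),\mathfrak{p}_P(x')\subseteq\mathfrak{p}_P([x,x'])$. Otherwise let $v,v'$ be the first and last vertices of $[x,x']$ in $\mathcal{N}_B(P)$; the two outer subsegments avoid $\mathcal{N}_B(P)$ (away from the split points), so (ii) bounds the diameters of $\mathfrak{p}_P$ on each of them by $O(B)$, giving $d(\mathfrak{p}_P(x),\mathfrak{p}_P(v))\leq O(B)$ and $d(\mathfrak{p}_P(v'),\mathfrak{p}_P(x'))\leq O(B)$. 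The middle subsegment $[v,v']$ has length $\leq d(x,x')$, and $v,v'$ lie within $B$ of $P$, so $d(\mathfrak{p}_P(v),\mathfrak{p}_P(v'))\leq d(\mathfrak{p}_P(v),v)+d(v,v')+d(v',\mathfrak{p}_P(v'))\leq d(x,x')+2B$. Concatenating, $d(\mathfrak{p}_P(x),\mathfrak{p}_P(x'))\leq d(x,x')+O(B)$, with the constant depending only on $G$ and its peripheral structure; together with (i) this is precisely the lemma.

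The main obstacle is the strong-contraction statement (ii): strong quasiconvexity alone controls only quasigeodesics with \emph{both} endpoints near $P$, whereas here one must understand how a geodesic free to wander far from $P$ comes back to it, and this genuinely needs the relatively hyperbolic structure (Bounded Coset Penetration, or hyperbolicity of the coned-off Cayley graph / cusped space). I would not reprove it but quote \cite{DrutuSapir}, where it is obtained from the tree-graded geometry of the asymptotic cones of $G$; alternatively, one can skip (ii) and deduce coarse Lipschitzness of $\mathfrak{p}_P$ directly from the strong quasiconvexity of $P$ by the general convexity machinery for (hierarchically) quasiconvex subsets, cf.\ \cite{russellsprianotran:convexity}. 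The remaining steps --- the reduction of ``coarsely Lipschitz projection'' to ``strongly contracting plus coarsely well defined'', and the chaining --- are formal.
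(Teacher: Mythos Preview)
The paper does not prove this lemma: it is simply quoted from \cite[Lemma~4.11]{DrutuSapir} with no argument given. Your sketch is a correct and standard way to see why the result holds, and indeed you yourself note at the end that you would ultimately just cite \cite{DrutuSapir} for the contracting property, which is exactly what the paper does for the whole statement.
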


\subsection{Combination theorem}\label{subsec: combination theorems}

\begin{defn}[\bf Glueing hieromorphism]\label{def: glueing_hieromorphism}
Let \((H, \mf{S}_1)\) and \((G, \mf{S}_2)\) be hierarchically hyperbolic groups. A \emph{glueing hieromorphism} between \(H\) and \(G\) is a group homomorphism  \(\phi \colon H \to G\) that can be realized as a full hieromorphism \((\phi, \phi^\diamondsuit, \phi^\ast_U)\) such that the image \(\phi(H)\) is hierarchically quasi-convex in \(G\) and the maps \(\phi^\ast_U \colon \fontact U \to \fontact \phi^\diamondsuit U\) are isometries for each \(U \in \mf{S}_1\). If the map \(\phi \colon H \to G\) is injective, we say that the glueing hieromorphism is injective. 
\end{defn}

\begin{defn}\label{intersectionproperty_definition}
A hierarchically hyperbolic space $(\mathcal{X},\mathfrak{S})$ has the \emph{intersection property} if 
the index set admits an operation $\wedge\colon (\mathfrak{S}\cup\{\emptyset\})\times(\mathfrak{S}\cup\{\emptyset\})\to\mathfrak{S}\cup\{\emptyset\}$ satisfying
the following properties for all $U,V,W\in\mathfrak{S}$:
\begin{enumerate}
\item $V\wedge \emptyset=\emptyset \wedge V=\emptyset$;
\item $U\wedge V=V\wedge U$;
\item $(U\wedge V)\wedge W=U\wedge (V\wedge W)$;
\item $U\wedge V\sqsubseteq U$ and $U\wedge V\sqsubseteq V$ whenever $U\wedge V\in\mathfrak{S}$;
\item if $W\sqsubseteq U$ and $W\sqsubseteq V$, then $W\sqsubseteq U\wedge V$.
\end{enumerate}
\end{defn}
 
\begin{defn}
A hierarchically hyperbolic space $(\mathcal{X},\mathfrak{S})$ is said to have clean containers if $U\perp \cont_\perp^Z U$ for all $U,Z\in\mathfrak{S}$, as originally defined in \cite[Definition 3.4]{ABD}.
\end{defn} 

We also recall a combination theorem for hierarchically hyperbolic groups. 

\begin{thm}[{\cite[Theorem C]{berlai2018refined}}]\label{comb_thm_ver2}
Let $\mc{G} = \bigl(\Gamma,\{G_v\}_{v\in V},\{G_e\}_{e\in E},\{\phi_{e^\pm}\colon G_e\to G_{e^\pm}\}_{ e\in E}\bigr)$ be a finite graph of hierarchically
hyperbolic groups. Suppose that:
\begin{enumerate}
\item each \(\phi_{e^\pm}\) is a $(K,K)$--coarsely Lipschitz map and a glueing hieromorphism;
\item each vertex group has the intersection property and clean containers.
\end{enumerate}
Then \(\pi_1(\mc{G})\) is a hierarchically hyperbolic group with the intersection property and clean containers.
\end{thm}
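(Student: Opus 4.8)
The plan is to run the standard tree-of-spaces construction of \cite{HHSII}, incorporating the bookkeeping of \cite{berlai2018refined} needed to preserve the intersection property and clean containers. First I would replace $\pi_1(\mc{G})$ by a $\pi_1(\mc{G})$-equivariantly quasi-isometric model $X$: the tree of spaces over the Bass--Serre tree $\mc{T}$ whose vertex spaces $X_{\tilde v}$ are copies of Cayley graphs of the conjugates $gG_vg^{-1}$, and whose edge spaces $X_{\tilde e}\cong G_e\times[0,1]$ are thickened copies of the edge groups glued in by the $\phi_{e^\pm}$. (One must stay equivariant throughout, since being a hierarchically hyperbolic \emph{group}, unlike an HHS, is not a quasi-isometry invariant; cf.\ Remark~\ref{rmk: HHS are QI invariant}.) The candidate index set is $\mf{S}=\{S\}\sqcup\bigsqcup_{\tilde v\in V(\mc{T})}\mf{S}_{\tilde v}$, where $\mf{S}_{\tilde v}$ is the HHG structure of $\Stab(\tilde v)$; I declare every maximal element $S_{\tilde v}$ nested in the new top element $S$, inherit nesting/orthogonality/transversality inside each $\mf{S}_{\tilde v}$, declare domains from distinct vertex spaces transverse, and identify domains coming from a common edge group via the full hieromorphisms $\phi_{e^\pm}^\diamondsuit$ — this identification being coherent precisely because the $\phi_{e^\pm}$ are \emph{glueing} hieromorphisms, so each $\phi^*_U$ is an isometry onto $\mathcal{C}U$. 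Finite complexity is then immediate: a $\sqsubseteq$-chain lies inside a single $\mf{S}_{\tilde v}$ together with at most $S$, so the complexity is $1+\max_v(\text{complexity of }G_v)$.

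Next I would pin down the hyperbolic spaces and coordinate maps. Set $\mathcal{C}S:=\mc{T}$ (a tree, hence hyperbolic), with $\pi_S\colon X\to\mc{T}$ the natural collapse map; for $W\in\mf{S}_{\tilde v}$ keep $\mathcal{C}W$ as in $\Stab(\tilde v)$ and put $\pi_W:=\pi_W^{G_v}\circ\gate_{X_{\tilde v}}$, where $\gate_{X_{\tilde v}}\colon X\to X_{\tilde v}$ is the gate map onto the vertex space (well defined, and coarsely Lipschitz by the tree structure, using the $(K,K)$-coarse Lipschitzness of the $\phi_{e^\pm}$). Inside a vertex the $\rho$-maps are inherited. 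For $W\in\mf{S}_{\tilde v}$ I set $\rho^W_S:=\tilde v\in\mc{T}$, and $\rho^S_W\colon\mc{T}\to\mathcal{C}W$ sends $\tilde u$ to the $\pi_W$-image of the gate, in $X_{\tilde v}$, of the edge space through which $[\tilde u,\tilde v]$ enters $X_{\tilde v}$. For transverse $W\in\mf{S}_{\tilde v}$ and $W'\in\mf{S}_{\tilde w}$ with $\tilde v\neq\tilde w$, I define $\rho^W_{W'}$ by gating $X_{\tilde v}$ onto the edge space on the $[\tilde v,\tilde w]$-side and pushing forward into $\mathcal{C}W'$, and symmetrically for $\rho^{W'}_W$.

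Finally I would verify the nine axioms of Definition~\ref{HHS_definition} together with the intersection property and clean containers, the guiding principle being that every assertion \emph{localizes}: inside one vertex space it reduces to the corresponding axiom for the vertex HHG, and across vertices it is governed by the geometry of $\mc{T}$. Thus Projections combines the vertex projections with coarse Lipschitzness of gates; Large Links for $S=\mc{T}$ takes $\{T_i\}$ to be the $S_{\tilde v}$ with $\tilde v$ on the geodesic $[\pi_S(x),\pi_S(x')]$, whose length is comparable to $d_{\mc{T}}$; Bounded Geodesic Image for $S\sqsupsetneq S_{\tilde v}$ says a $\mc{T}$-geodesic avoiding a neighbourhood of $\tilde v$ gates into a single edge space of $X_{\tilde v}$; and Partial Realization is immediate because a family of pairwise orthogonal domains lies in one vertex space. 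I expect the main obstacle to be the Consistency/Transversality axiom (and its nesting analogue) for the new cross-vertex pairs: given $x\in X$ and transverse $W\in\mf{S}_{\tilde v}$, $W'\in\mf{S}_{\tilde w}$, one establishes the consistency inequality by asking on which side of the edge separating $\tilde v$ and $\tilde w$ the point $\gate_{X_{\tilde v}}(x)$ lies — on the $\tilde w$-side forces $\pi_W(x)$ close to $\rho^{W'}_W$, otherwise $\pi_{W'}(x)$ is close to $\rho^{W}_{W'}$ — and the "$d_U(\rho^V_U,\rho^W_U)\le\kappa_0$" clauses likewise split according to the positions of the domains in $\mc{T}$. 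The second delicate point is defining the meet $\wedge$ witnessing the intersection property: in a single vertex use the vertex group's $\wedge$; for $W,W'$ in distinct vertices $\tilde v,\tilde w$, set $W\wedge W'$ to be the $\wedge$ — computed inside the common edge group's structure, which embeds as a sub-poset of both $\mf{S}_{\tilde v}$ and $\mf{S}_{\tilde w}$ via the full glueing hieromorphisms — of the largest edge-domains below $W$ and below $W'$, and $\emptyset$ if there is none. Verifying associativity and compatibility with nesting here is exactly where the intersection property and clean containers of the vertex groups, and the \emph{fullness} of the glueing hieromorphisms (so that the two sub-poset identifications of the edge structure coincide), are indispensable. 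Once $\wedge$ is in place, clean containers for $\pi_1(\mc{G})$ follows from that of the vertex groups, since orthogonality never crosses vertices.
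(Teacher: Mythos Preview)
The paper does not contain a proof of this statement: Theorem~\ref{comb_thm_ver2} is quoted verbatim from \cite[Theorem~C]{berlai2018refined} and used as a black box, so there is no ``paper's own proof'' to compare your proposal against. Your task here was to recognise the citation, not to reprove the combination theorem.

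That said, a brief remark on the sketch itself. Your outline follows the standard tree-of-spaces strategy, and most of the bookkeeping is in the right spirit, but two points would need care if you actually carried this out. First, once you identify domains coming from a common edge group via the glueing hieromorphisms, the blanket claim that ``domains from distinct vertex spaces are transverse'' is no longer literally true: an edge-group domain now lives simultaneously in both adjacent vertex structures, and the transitive closure of these identifications along paths in $\mc{T}$ can make a single domain appear in many vertex pieces. Keeping this quotient consistent, and checking that complexity and the $\rho$-maps survive it, is the real content of the argument in \cite{berlai2018refined}. Second, taking $\mathcal{C}S=\mc{T}$ is tempting but typically too coarse for Large Links and Uniqueness to go through cleanly in the presence of these identifications; the usual fix is to work with a blown-up model of the top space (or to prove a factored-space version and then quotient). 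None of this is fatal to your approach, but the ``main obstacle'' is less the cross-vertex consistency inequality and more the global coherence of the edge-identification quotient on $\mf{S}$.
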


Not all fundamental groups of a graph of groups have a hierarchical hyperbolic structure, as the following remark shows.

\begin{rmk}\label{remark: absence of BS in HHG}
If $G$ is a hierarchically hyperbolic group, then $G$ cannot have a subgroup isomorphic to $BS(n,m)=\langle a,t\mid ta^nt^{-1}=a^m\rangle$, with \(\vert n \vert \neq \vert m \vert\). Indeed, suppose there is an embedding $\iota: BS(n,m)\hookrightarrow G$. We have that \(\iota(a)\) is an infinite  order element of \(G\). By \cite[Theorem 7.1]{HHSBoundaries} and  \cite[Theorem 3.1]{ durham2018corrigendum}, \(\iota(a)\) is undistorted, which is a contradiction.
\end{rmk}

More generally, if a group $G$ has a hierarchical hyperbolic structure, then it cannot be unbalanced, as it cannot contain infinite undistorted cyclic subgroups. The rest of the paper is dedicated to investigate if the converse also holds for (fundamental group of) graph of groups. More precisely, we show that the converse holds for the class of hyperbolic-2-decomposable groups. 

\section{Hierarchical hyperbolicity of (2-ended)-2-decomposable groups}\label{section: graph of infinite virtually cyclic groups}

In this section, we focus on (2-ended)-2-decomposable groups. That is to say, graphs of groups where every vertex and edge group is 2-ended. We begin the section by recalling some useful results on 2-ended groups.

\subsection{Two-ended groups}

In this subsection, we recall basic results and remarks on the structure of two-ended groups. An important result of these type of groups is known as the structure theorem for infinite virtually cyclic groups. Throughout the paper, we will make use of this fact on many occasions.

\begin{lemma}[{\cite[Lemma 4.1]{wall1967poincare}}]\label{lemma:basic_fact_virt_cyclic}
If $G$ is an infinite virtually cyclic group, then either
\begin{enumerate}
    \item $G$ admits a surjection with finite kernel onto the infinite cyclic group $\mathbb{Z}$, or
    \item $G$ admits a surjection with finite kernel onto the infinite dihedral group $\mathbb{D}_{\infty}$
\end{enumerate}
\end{lemma}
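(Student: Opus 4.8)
The plan is to reduce the statement to understanding the conjugation action of $G$ on a normal infinite cyclic subgroup of finite index. First I would replace the given finite-index infinite cyclic subgroup of $G$ by its normal core $C$: this is the intersection of finitely many finite-index subgroups, hence again of finite index, and a nontrivial subgroup of an infinite cyclic group is infinite cyclic, so $C$ is normal in $G$ with $C \cong \mathbb{Z}$ and $[G:C]<\infty$. Two elementary facts I would record at this point: $G$ is finitely generated (being commensurable with $\mathbb{Z}$), and every quotient of a virtually cyclic group is again virtually cyclic (the image of $C$ is cyclic of finite index). Then I would look at the conjugation homomorphism $\chi\colon G\to \mathrm{Aut}(C)\cong\mathbb{Z}/2$ and set $Z:=\ker\chi=Z_G(C)$, which is normal of index $1$ or $2$ in $G$ and inside which $C$ is central and of finite index.

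The heart of the argument is the analysis of $Z$, which has finite index in $G$ and is therefore finitely generated. Since $C\le Z(Z)$ has finite index, $Z/Z(Z)$ is finite, so Schur's theorem gives that the commutator subgroup $[Z,Z]$ is finite. Hence $Z^{\mathrm{ab}}$ is finitely generated, infinite (as $Z$ is infinite and $[Z,Z]$ finite) and virtually cyclic, so $Z^{\mathrm{ab}}\cong\mathbb{Z}\oplus F'$ with $F'$ finite. Let $T$ be the kernel of the composite $Z\to Z^{\mathrm{ab}}\to\mathbb{Z}$, the second map being the projection onto the free factor. Then $T$ is an extension of $F'$ by $[Z,Z]$, hence finite, and since any torsion element of $Z$ maps to $0$ in $\mathbb{Z}$, the subgroup $T$ is precisely the set of torsion elements of $Z$. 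In particular $T$ is characteristic in $Z$, hence normal in $G$, and $Z/T\cong\mathbb{Z}$ is torsion-free.

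It then remains to distinguish two cases. If $Z=G$, then $G/T\cong\mathbb{Z}$ with $T$ finite, which is conclusion (1). If $[G:Z]=2$, choose $s\in G\setminus Z$; by definition of $Z$, conjugation by $s$ inverts $C$. Passing to $\overline{G}:=G/T$, the image $\overline{C}$ of $C$ is infinite cyclic (as $C\cap T=1$) and of finite index in $Z/T\cong\mathbb{Z}$, and $\overline{s}$ inverts $\overline{C}$; since the only automorphisms of $\mathbb{Z}$ are $\pm\mathrm{id}$ and the identity inverts no nontrivial subgroup, $\overline{s}$ acts on $Z/T$ by inversion. But $\overline{s}^{\,2}\in Z/T$ both commutes with $\overline{s}$ and is inverted by it, forcing $\overline{s}^{\,2}=1$; therefore $G/T=(Z/T)\rtimes\langle\overline{s}\rangle\cong\mathbb{Z}\rtimes_{-1}(\mathbb{Z}/2)=\mathbb{D}_\infty$, with $T$ finite, which is conclusion (2).

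The only input that is not completely elementary is Schur's theorem (a centre-by-finite group has finite commutator subgroup), which I expect to be the main thing one wants available; the remainder is bookkeeping around the normal core, the identification of $T$ with the torsion subgroup, and the observation that an automorphism of $\mathbb{Z}$ inverting a finite-index subgroup must be $-\mathrm{id}$. Alternatively one could invoke the Stallings--Wall structure theory for two-ended groups directly, but the argument above stays self-contained modulo standard facts about virtually cyclic groups.
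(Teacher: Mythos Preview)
The paper does not actually prove this lemma; it merely cites it as \cite[Lemma~4.1]{wall1967poincare} and moves on. So there is no in-paper argument to compare against.

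Your proof is correct. Taking the normal core to obtain a normal infinite cyclic $C\trianglelefteq G$, splitting on the sign homomorphism $\chi\colon G\to\mathrm{Aut}(C)\cong\mathbb{Z}/2$, and then using Schur's theorem to extract the finite torsion subgroup $T$ of the centraliser $Z=Z_G(C)$ is a clean and standard route. The identification of $T$ as the set of torsion elements of $Z$ (hence characteristic in $Z$ and normal in $G$) is the key step that makes the case split work, and your verification that $\overline{s}^2=1$ in $G/T$ via the torsion-freeness of $Z/T\cong\mathbb{Z}$ is sound. One small stylistic point: you do not actually need to record that $G$ is finitely generated separately, since $Z$ is finitely generated directly from being a finite extension of $C\cong\mathbb{Z}$ (or from $[G:C]<\infty$), and that is all you use. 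Otherwise the argument is self-contained modulo Schur's theorem, as you note.
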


We recall that the \emph{infinite dihedral group} is the group defined by the presentation \(\mathbb{D}_\infty = \langle r, s \mid srs = r^{-1}, s^2 \rangle\). Note that every element of \(\mathbb{D}_\infty\) can be written as \(s^\epsilon r^k\), for \(\epsilon\in \{0,1\}\) and \(k \in \mathbb{Z}\). Moreover, every element of the form \(sr^k\) has order 2, and an element of the form \(r^k\) has infinite order precisely when \(k \neq 0\). Using those observations, we have the following lemma.

\begin{lemma}\label{lem: Kernel to dihedral is always the same}
Let \(G\) be a virtually cyclic group. Let \(\Phi_1\), \(\Phi_2 \colon G \to \mathbb{D}_\infty\) be homomorphisms with finite kernel and finite index image. Then \(\Ker (\Phi_1) = \Ker (\Phi_2)\). 
\end{lemma}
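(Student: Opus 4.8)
The plan is to analyze how a homomorphism $\Phi \colon G \to \dihed$ with finite kernel and finite-index image interacts with the (virtually cyclic) structure of $G$, and to show that $\Ker(\Phi)$ is forced to be a canonical subgroup of $G$ — namely the set of elements of $G$ that have finite order or lie in a specific index-$2$ situation. Concretely, I would first observe that since $\Im(\Phi_i)$ has finite index in $\dihed$, it is itself infinite dihedral (every finite-index subgroup of $\dihed$ is either infinite cyclic or infinite dihedral, and if it were cyclic then $G$ would have a finite-kernel surjection onto $\ZZ$, but then $\dihed$ would too after restricting — actually one has to be slightly careful here, so I would instead just note that the image, being infinite and finitely generated inside $\dihed$, contains the translation subgroup up to finite index). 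The key point I want to extract is: an element $g \in G$ lies in $\Ker(\Phi_i)$ if and only if $\Phi_i(g) = 1$, and I want to recognize this condition intrinsically in $G$.

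The main step is the following characterization: for $g \in G$ of infinite order, $g \notin \Ker(\Phi_i)$, because $\Ker(\Phi_i)$ is finite; and conversely the restriction of $\Phi_i$ to the (unique up to finite index) maximal infinite cyclic subgroup behaves predictably. Here I would use the normal form in $\dihed$ recalled just before the lemma: every element is $s^\epsilon r^k$, elements $sr^k$ have order $2$, and $r^k$ has infinite order iff $k \neq 0$. Let $C \leq G$ be an infinite cyclic subgroup of finite index (which exists by Lemma~\ref{lemma:basic_fact_virt_cyclic} in either case, or directly). Then $\Phi_i(C)$ is an infinite cyclic subgroup of $\dihed$, hence contained in the translation subgroup $\langle r \rangle$. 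I claim $\Ker(\Phi_i) = \{g \in G : g^2 \in \Phi_i^{-1}(\langle r\rangle) \text{ and } \dots\}$ — more cleanly: let $N_i = \Phi_i^{-1}(\langle r \rangle)$, the preimage of the translation subgroup. This is an index-$1$ or index-$2$ subgroup of $G$ depending on whether $s \in \Im(\Phi_i)$, and on $N_i$ the map $\Phi_i$ is a homomorphism to $\ZZ$ with finite kernel; its kernel is the unique maximal finite normal subgroup of $N_i$. So I would argue: (a) $N_1 = N_2$, because $N_i$ is characterized as the unique subgroup of $G$ of index $\leq 2$ that surjects onto $\ZZ$ with finite kernel and whose abelianization-modulo-torsion... — actually the cleanest characterization is that $N_i$ consists exactly of the elements of $G$ whose conjugation action on any (equivalently, the) finite-index $\ZZ$ is trivial rather than by inversion. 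That is: $g \in N_i$ iff for $c \in C$ (infinite order), $gcg^{-1} = c$ up to the finite kernel ambiguity — precisely, iff $gc^m g^{-1} = c^m$ for the appropriate $m$. This is intrinsic to $G$, so $N_1 = N_2 =: N$. Then (b) on $N$, both $\Phi_i$ restrict to finite-kernel homomorphisms $N \to \ZZ$, and the kernel of any such is the unique maximal finite (normal) subgroup of $N$ — this is intrinsic — hence $\Ker(\Phi_1|_N) = \Ker(\Phi_2|_N)$. Since $\Ker(\Phi_i) \leq N_i = N$, we get $\Ker(\Phi_i) = \Ker(\Phi_i|_N)$, and these coincide.

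The step I expect to be the main obstacle is pinning down the intrinsic characterization of $N_i$ (the "orientation-preserving" index-$\leq 2$ subgroup) cleanly enough that it visibly does not depend on $\Phi_i$, while correctly handling both cases of Lemma~\ref{lemma:basic_fact_virt_cyclic} — in particular the case where $\Im(\Phi_i)$ might be contained in $\langle r\rangle$ (so $N_i = G$), versus where it genuinely surjects onto $\dihed$. I would handle this uniformly by defining $N_i = \{ g \in G : \exists\, c \in G \text{ of infinite order with } gcg^{-1}c^{-1} \text{ of finite order}\}$, or equivalently describing $N_i$ via the action on the two ends of $G$; then $\Ker \Phi_i$ is the set of finite-order elements of $N_i$ that are central-ish — more precisely, $\Ker \Phi_i$ is the torsion subgroup of $N_i$ when $N_i$ has a unique such, which holds since $N_i$ is virtually $\ZZ$ and $\ZZ$-by-finite with the $\ZZ$ acting trivially forces the torsion to form a subgroup. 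Once that is set up, the conclusion $\Ker(\Phi_1) = \Ker(\Phi_2)$ is immediate. I would keep the write-up short by invoking the explicit element form $s^\epsilon r^k$ in $\dihed$ rather than general structure theory.
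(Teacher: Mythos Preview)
Your approach is correct and genuinely different from the paper's. The paper argues by direct contradiction: assuming $g\in\Ker(\Phi_1)\setminus\Ker(\Phi_2)$, it notes $g$ has finite order, picks $c\in G$ with $\Phi_2(c)$ (hence $c$, hence $\Phi_1(c)$) of infinite order, and observes that $\Phi_2(gc)$ is a reflection so $gc$ must have finite order, while $\Phi_1(gc)=\Phi_1(c)$ has infinite order --- a contradiction. This is a short element-chase using the normal form $s^\epsilon r^k$.

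Your route is more structural: you identify $N_i=\Phi_i^{-1}(\langle r\rangle)$ intrinsically as the subgroup of $G$ acting trivially on the two ends (equivalently, those $g$ with $gcg^{-1}c^{-1}$ of finite order for some/every infinite-order $c$), so $N_1=N_2=:N$; then $\Ker(\Phi_i)=\Ker(\Phi_i|_N)$ is exactly the set of torsion elements of $N$, an intrinsic description. Both kernels therefore equal this set. What your approach buys is a conceptual explanation --- the common kernel is the unique maximal finite normal subgroup of $G$ --- and it would adapt immediately to any other virtually-$\ZZ$ target in place of $\dihed$. The paper's argument is quicker to write down and avoids setting up the ends/orientation description, at the cost of being a little opaque about \emph{why} the kernel is canonical. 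Your anticipated obstacle (characterizing $N_i$ intrinsically) is not actually difficult: the computation $\Phi_i(gcg^{-1}c^{-1})=r^{\pm 2k}$ or $1$ according to whether $\Phi_i(g)$ lies in $\langle r\rangle$ or not, together with finiteness of the kernel, handles it cleanly and uniformly in both cases of Lemma~\ref{lemma:basic_fact_virt_cyclic}.
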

\begin{proof}
As before, \(\mathbb{D}_\infty = \langle a, b \mid bab = a^{-1}, b^2 \rangle\).
Suppose that there is \(g \in G\) such that \(g \in \Ker(\Phi_1)\) and \(g \not \in \Ker(\Phi_2)\). Since \(g \in \Ker(\Phi_1)\), we conclude that \(g\) has finite order, otherwise \(\lvert \Ker(\Phi_1)\rvert = \infty\). Since \(\Phi_2(G)\) has finite index in \(\mathbb{D}_\infty\) there exists \(c \in G\) such that \(\Phi_2(c)\) has infinite order. In particular there exist \(k_1 \in \mathbb{Z}, k_2 \in \mathbb{Z}-\{0\}\) such that \(\Phi_2(g) = ba^{k_1}\) and \(\Phi_2(c) = a^{k_2}\), and so \(\Phi_2(gc) = ba^{k_1+k_2}\). Again, \(gc\) has to have finite order to not contradict \(\lvert \Ker(\Phi_2)\rvert < \infty\) . However, since \(g \in \Ker(\Phi_1)\) we have that \(\Phi_1(gc) = \Phi_1(c)\), and so \(gc\) cannot have finite order. From this we conclude \(\Ker(\Phi_1) \subseteq \Ker(\Phi_2)\). The symmetric argument yields the claim.
\end{proof}

\begin{rmk}\label{rmk: virtually cyclic surjection dichotomy}
Note that an infinite virtually cyclic group $G$ cannot surject onto both $\mathbb{Z}$ and $\mathbb{D}_{\infty}$ with finite kernel. Indeed, assume that two surjective homomorphisms $\Phi:G\to\mathbb{Z}$ and $\Phi':G\to\mathbb{D}_{\infty}$ exist. Since $\mathbb{Z}$ embeds into $\mathbb{D}_{\infty}$ with finite index image, we can regard $\Phi$ as a homomorphism from $G$ to $\mathbb{D}_{\infty}$ with finite kernel and finite index image. Let $s\in\mathbb{D}_{\infty}$ be the generator of order two and let $g\in G$ be an element such that $\Phi'(g)=s$. Since $s^2=1$, we have that $g^2\in\Ker(\Phi')$; by Lemma \ref{lem: Kernel to dihedral is always the same} we have that $g^2\in\Ker(\Phi)$. Since $\mathbb{Z}$ is torsion-free, $\Phi(g)^2=1$ if and only if $\Phi(g)=1$. Since $\Ker(\Phi)=\Ker(\Phi')$, it follows that $\Phi'(g)=1$, which is a contradiction.
\end{rmk}

\subsection{Pulling back hierarchical structures}

Recall that GBS groups are (infinite cyclic)-2-decomposable groups.

\begin{defn}\label{definition: GBS groups}
We say that a group $G$ is a Generalized Baumslag--Solitar group if there exists a finite graph of infinite cyclic groups $\mc{G}$ for which $G\cong\pi_1(\mc{G})$.
\end{defn}

\begin{lemma}\label{lemma: subgroups of graph of 2 ended groups}
Let $G$ be a (2-ended)-2-decomposable group and let $H\leq G$. If $H$ is torsion-free, then $H$ is either a GBS group or a free group.
\end{lemma}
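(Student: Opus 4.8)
The plan is to use the Bass--Serre theory of the action of $G$ on its Bass--Serre tree $T$ and restrict it to the subgroup $H$. Since $H \leq G = \pi_1(\mc{G})$, the subgroup $H$ acts on $T$ with stabilizers that are intersections $H \cap G_v^g$ of $H$ with conjugates of vertex groups of $\mc{G}$; the edge stabilizers are similarly $H \cap G_e^g$. By the structure theorem for graphs of groups (the subgroup structure theorem), $H$ itself is the fundamental group of a graph of groups $\mc{H}$ whose vertex groups are these subgroups $H \cap G_v^g$ and whose edge groups are the $H \cap G_e^g$.

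First I would observe that since every $G_v$ and every $G_e$ is 2-ended, every vertex group $H \cap G_v^g$ and edge group $H \cap G_e^g$ of $\mc{H}$ is a subgroup of a 2-ended (hence virtually cyclic) group, so is itself either finite or 2-ended. Now invoke the hypothesis that $H$ is torsion-free: a torsion-free virtually cyclic group is infinite cyclic, and a torsion-free finite group is trivial. So every vertex and edge group of $\mc{H}$ is either trivial or infinite cyclic. Next I would split into cases according to whether any edge group is nontrivial. If all edge groups of $\mc{H}$ are trivial, then $H = \pi_1(\mc{H})$ is a free product of infinite cyclic groups and trivial groups, hence a free group. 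If some edge group is nontrivial, then $H$ is the fundamental group of a graph of groups in which every vertex and edge group is infinite cyclic; collapsing or discarding the pieces with trivial groups where convenient, this exhibits $H$ as a GBS group (possibly after noting that one may always arrange the vertex groups to be infinite cyclic, since a free product of a GBS group with a free group is again a GBS group — one can attach extra edges with infinite cyclic edge groups to absorb the free factors, or simply appeal to the definition allowing the graph of infinite cyclic groups directly).

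The main subtlety — and the step I would be most careful about — is the bookkeeping when $\mc{H}$ has a mixture of trivial and infinite cyclic vertex/edge groups: one needs to argue that $\pi_1(\mc{H})$ in that mixed case is still a GBS group (or free, if it degenerates). The cleanest way is: the trivial edge groups split $H$ as a free product $H = H_1 \ast \cdots \ast H_k \ast F$ where $F$ is free and each $H_i$ is the fundamental group of a graph of infinite cyclic groups with all edge groups nontrivial, hence a GBS group in the strict sense; then one notes that a free product of finitely many GBS groups with a finitely generated free group is again a GBS group (wedge the defining graphs together at a vertex and add loops with trivial... — more carefully, since $\ZZ \ast \ZZ$ is already a GBS group via an amalgam over $\ZZ$ only in a degenerate sense, the honest statement is that a free product of GBS groups and free groups, provided at least one factor is a GBS group, is a GBS group). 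If $H$ has no nontrivial edge groups at all it is free; otherwise it is GBS. I would also double-check the degenerate case $H = \{1\}$, which is free (rank $0$), and the case $H \cong \ZZ$, which is a GBS group (one vertex, one infinite cyclic group, no edges), so both conclusions are consistent with the dichotomy as stated.
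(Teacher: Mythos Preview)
Your approach via Bass--Serre theory matches the paper's, but you miss the observation that drives the paper's argument: since both vertex and edge groups of $\mc{G}$ are 2-ended, every edge group has \emph{finite index} in its adjacent vertex groups. This has two consequences the paper exploits. First, the Bass--Serre tree $T$ is locally finite. Second --- and this dissolves your ``mixed case'' entirely --- if $H$ meets one vertex stabilizer nontrivially then it meets \emph{every} vertex and edge stabilizer nontrivially: a nontrivial torsion-free $H\cap G_v$ has finite index in the 2-ended group $G_v$, hence has nontrivial intersection with the finite-index subgroup $\phi_{e^\pm}(G_e)$, and one propagates this across $T$ by connectedness. So the dichotomy is clean: either all stabilizers of $H$ on $T$ are trivial and $H$ is free, or all are infinite cyclic.

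This matters because your proposed handling of the mixed case is actually wrong. A free product of a GBS group with a free group need not be a GBS group: already $\ZZ\ast\ZZ\cong F_2$ is not one (for instance, $F_2$ has cohomological dimension $1$, while a GBS group whose graph has at least one edge has cohomological dimension $2$). So the hedged claim that ``a free product of GBS groups and free groups, provided at least one factor is a GBS group, is a GBS group'' fails at the first example. Separately, you never address why the graph underlying $\mc{H}$ can be taken to be \emph{finite}, as Definition~\ref{definition: GBS groups} requires; the paper invokes local finiteness of $T$ for this step.
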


\begin{proof}
Let $G_v$ be a vertex group in $\mathcal{G}$. Since $H$ is torsion-free, there are two possibilities: either $H\cap G_v$ is trivial or it is infinite cyclic. Since every edge group has finite index in its neighbouring vertex groups, if $H\cap G_v$ is trivial, then $H\cap G_w$ is trivial for every other vertex $w$. Then $H$ acts on the Bass-Serre tree corresponding to $\mathcal{G}$ with trivial stabilizers. This is equivalent to $H$ being a free group.

If $H\cap G_v$ is non trivial, then it is of finite index in $G_v$, since $G_v$ is two-ended. Therefore, since the Bass-Serre tree of $\mathcal{G}$ is locally finite, the group $H$ acts with infinite cyclic stabilizers on a locally finite tree. That is to say, $H$ splits as a finite graph of groups with infinite cyclic vertex groups and the result follows.
\end{proof}

\begin{defn}
Let $G,H$ be finitely generated groups and let $S_G,S_H$ be generating sets of $G$ and $H$ respectively. We say that a group homomorphism $f:H\to G$ is a \emph{quasi-isometric homomorphism} if $f:(G,d_{S_G})\to (H,d_{S_H})$ is a quasi-isometry.
\end{defn}

\begin{rmk}
Recall that a group homomorphism $f:G\to H$ yields a quasi-isometry for some (hence, any) generating sets $S_H,S_G$ if and only if $|\Ker(f)|<\infty$ and $|H:\Im(f)|<\infty$.
\end{rmk}

As we have seen in Remark \ref{rmk: HHS are QI invariant}, the hierarchically hyperbolic structure on geodesic metric spaces can be pushed out and pulled back via quasi-isometries. For hierarchically hyperbolic groups, however, this is not true, as group actions are in general not equivariant with respect to any quasi-isometry.
The next lemma describes how to pull back hierarchically hyperbolic group structures on a group $H$ via quasi-isometric homomorphisms. Recall the definition of glueing hieromorphism (Definition \ref{def: glueing_hieromorphism}).

\begin{lemma}[\bf Pulling back hierarchical structures]\label{lemma: pulling back hierarchical structures}
Let \((G, \mf{S}_G)\) be a hierarchically hyperbolic group and let \(f \colon H \to G\) be a quasi-isometric homomorphism. Then \(H\) can be endowed with a hierarchically hyperbolic structure \(\mf{S}_H\) defined as follows. 
\begin{enumerate}
\item The set \(\mf{S}_H\) coincides with \(\mf{S}_G\), and the associated hyperbolic spaces also coincide. 
\item The projections \(\pi^H_U \colon H \to \fontact U\) are defined as the composition \(\pi^G_U \circ f\), where \(\pi^G_U \colon G \to \fontact U\) is the projection associated to \((G, \mf{S}_G)\). 
\item The relations between the elements of \(\mf{S}_H\) are unchanged, and so are the maps \(\rho_V^U\).
\end{enumerate}
Moreover, \(f\) is a glueing hieromorphism between \(H\) and \(G\).
\end{lemma}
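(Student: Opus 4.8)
The plan is to verify the axioms of a hierarchically hyperbolic space for $(H,\mf{S}_H)$ directly, by transporting everything through $f$ and using that $f$ is a quasi-isometry with quasi-inverse $\bar f\colon G\to H$ (not necessarily equivariant, but that does not matter for the HHS axioms, only for the HHG structure, which we are not claiming for $H$ here). Since $\mf{S}_H=\mf{S}_G$ as index sets, the hyperbolic spaces $\fontact U$ and all relations ($\sqsubseteq$, $\perp$, $\pitchfork$), all $\rho$-maps between elements of $\mf{S}$, the maximal element, the complexity bound, and the container maps $\cont_\perp^Z U$ are literally unchanged; so the axioms that only concern the combinatorics of $\mf{S}$ and the $\rho^U_V$ maps (Nesting, Orthogonality, Finite complexity, Bounded geodesic image, Partial realization's second bullet) hold verbatim. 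The only axioms needing genuine checking are those mentioning the projections $\pi^H_U$ or the metric $d_H$, namely Projections, the consistency inequalities in Transversality and Consistency, Large links, Partial realization's first bullet, and Uniqueness.

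First I would record that $f$ is an $(L,L)$--quasi-isometry for some $L$. For the Projections axiom: $\pi^H_U=\pi^G_U\circ f$ is a composition of an $(L,L)$--quasi-isometry with a $(K,K)$--coarsely Lipschitz map, hence coarsely Lipschitz with controlled constants, has image of bounded diameter on points, and $\pi^H_U(H)=\pi^G_U(f(H))$ is within Hausdorff distance $L$ of $\pi^G_U(G)$ (since $f(H)$ is $L$--dense), hence quasiconvex with controlled constant. For the consistency inequalities (both the transverse one and the nested one), given $x\in H$ we apply the corresponding inequality in $G$ to $f(x)$; since $d_{\phi^\diamondsuit U}(\pi^H_U(x),\cdot)=d_U(\pi^G_U(f(x)),\cdot)$ by definition, the inequalities transfer with the same constant $\kappa_0$. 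For Large links: given $x,x'\in H$ and $W\in\mf{S}$, apply Large links in $G$ to $f(x),f(x')$ to get $\{T_i\}$; since $d_T(\pi^H_T(x),\pi^H_T(x'))=d_T(\pi^G_T(f(x)),\pi^G_T(f(x')))$ for every $T$, the same $T_i$ work for $H$, with $N$ now a linear function of $d_W(\pi^H_W(x),\pi^H_W(x'))$. For Partial realization's distance bullet: given pairwise orthogonal $V_j$ and points $p_j\in\pi^H_{V_j}(H)\subseteq\pi^G_{V_j}(G)$, apply Partial realization in $G$ to obtain $y\in G$ realizing the $p_j$ up to $\alpha$; then set $x=\bar f(y)$ and note $\pi^H_{V_j}(x)=\pi^G_{V_j}(f\bar f(y))$ is within the quasi-isometry constant of $\pi^G_{V_j}(y)$, so $x$ realizes the $p_j$ up to a controlled constant. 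For Uniqueness: given $x,y\in H$ with $d_H(x,y)$ large, $d_G(f(x),f(y))$ is large, so some $V$ has $d_V(f(x),f(y))\geq\kappa$, i.e.\ $d_V(\pi^H_V(x),\pi^H_V(y))\geq\kappa$; the threshold $\theta_u$ changes by the quasi-isometry constants.

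Having verified $(H,\mf{S}_H)$ is a hierarchically hyperbolic space, it remains to check that $f$ is a glueing hieromorphism (Definition~\ref{def: glueing_hieromorphism}). Take $\phi=f$, $\phi^\diamondsuit=\mathrm{id}_{\mf{S}}$, and $\phi^*_U=\mathrm{id}_{\fontact U}$. Then $\phi^\diamondsuit$ trivially preserves nesting, transversality and orthogonality and is injective; $\phi^*_U$ is an isometry, so in particular a uniform quasi-isometric embedding, establishing also the fullness condition (1), and fullness condition (2) is immediate since $\phi^\diamondsuit$ is the identity surjection onto $\mf{S}'=\mf{S}_G=\mf{S}_H$. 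The first coarsely-commuting square of Equation~\eqref{coarsely.commuting.diagrams} reads $\phi^*_U\circ\pi^H_U=\pi^G_U\circ f$, which is an equality by the very definition of $\pi^H_U$; the second square reads $\rho^U_V\circ\phi^*_U=\phi^*_V\circ\rho^U_V$, again an equality of identity-composed maps. Finally, $f(H)$ is $L$--dense in $G$, hence coarsely equal to $G$, which is hierarchically quasiconvex in itself, so $f(H)$ is hierarchically quasiconvex. The coarse Lipschitz-ness of $f$ is part of $f$ being a quasi-isometry. This completes the verification.

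The main obstacle is not any single step but the bookkeeping of constants: one must check that all the new constants ($\xi$, $K$, $\kappa_0$, $\lambda$, $E$, $\alpha$, $\theta_u$) depend only on the original HHS constants of $(G,\mf{S}_G)$ and the quasi-isometry constants of $f$, which is what makes the structure legitimate; and in Partial realization one must be slightly careful that $\bar f$ is only a coarse inverse, so the realizing point is obtained up to an additional additive error coming from $f\bar f\simeq\mathrm{id}_G$. None of this is deep, but it is exactly the kind of ``tedious but straightforward'' checking the authors allude to in Remark~\ref{rmk: HHS are QI invariant}; the genuinely new content beyond that remark is only the verification that the identity triple is a \emph{glueing} hieromorphism, and that is essentially immediate from the definitions once the HHS axioms are in place.
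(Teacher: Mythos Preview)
Your verification of the HHS axioms for $(H,\mf{S}_H)$ is correct, and your check that the triple $(f,\mathrm{id}_{\mf{S}},\mathrm{id}_{\fontact U})$ satisfies the coarsely-commuting diagrams and fullness conditions is also fine. However, there is a genuine gap: you explicitly disclaim the hierarchically hyperbolic \emph{group} structure on $H$, writing that the equivariance ``does not matter for the HHS axioms, only for the HHG structure, which we are not claiming for $H$ here''. But glueing hieromorphisms (Definition~\ref{def: glueing_hieromorphism}) are by definition maps between hierarchically hyperbolic \emph{groups}, and the entire point of the lemma is to feed $(H,\mf{S}_H)$ into the combination theorem (Theorem~\ref{comb_thm_ver2}), which requires HHG structures on the edge and vertex groups. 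So the ``Moreover'' clause is not even well-posed without the HHG structure, and the lemma would be useless for its intended application.

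The paper's proof takes essentially the opposite emphasis. It dispatches the HHS axioms in one sentence by invoking the quasi-isometric invariance of Remark~\ref{rmk: HHS are QI invariant} (which is exactly the tedious check you carry out by hand), and then spends its effort on what you omit: showing that $H$ acts on $\mf{S}_H$ via $f$ with finitely many orbits (since $f(H)$ has finite index in $G$), and that the action is by hieromorphisms --- defining $h_U\colon\fontact U\to\fontact hU$ to be the isometry $f(h)^*_U$ coming from the $G$-action and checking the commuting square $h_U\circ\pi^H_U=\pi^H_{hU}\circ h$, which works precisely because $f$ is a homomorphism (so $f(hh')=f(h)f(h')$). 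This is the step where the algebraic hypothesis on $f$ is used beyond it being a quasi-isometry, and it is what your proof is missing.
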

\begin{proof}
Since \(f\) has finite kernel and finite index image, it is clear that \(f\) induces a quasi-isometry. Thus \((H, \mf{S}_H)\) is a hierarchically hyperbolic space.  In order to show that it is a hierarchically hyperbolic group, we now show that the structure induced above is $H$--equivariant. Since \(G\) acts on \(\mf{S}_G\), we obtain that \(H\) acts on \(\mf{S}\) as well via \(f\). Since \(f(H)\) has finite index in \(G\), we obtain that the action has finitely many orbits. 
We now show that every $h\in H$ and $U\in\mathfrak{S}_H$ there exists an isometry $h_U:\mathcal{C}U\to\mathcal{C}hU$ such that the following diagram commutes
\begin{equation}\label{coarsely.commuting.diagrams.hhg}
\xymatrix{
H\ar[r]^{h}\ar[d]_{\pi_U} & H\ar[d]^{\pi_{hU}}\\
\mathcal{C}U\ar[r]_{h_U}&\mathcal{C}hU
}
\end{equation}
Indeed, if we define $h_U$ as the isometry induced by $f(h)$ on $\mathcal{C}U$ we obtain that $h_U\circ\pi^H_U(h')=f(h)^*_U\circ\pi_U^G\circ f(h')=\pi_{hU}^G(f(h)\cdot f(h'))=\pi_{hU}^H(h\cdot h')$ for every $h'\in H$. \end{proof}

\begin{defn}
If \(f \colon H \to G\) is as in Lemma \ref{lemma: pulling back hierarchical structures}, we say that \(\mf{S}_H\) is the \emph{pullback} of the hierarchical structure on \(G\) and denote it by \(f^\ast (\mf{S}_G)\). 
\end{defn}

From the above we immediately obtain the following lemma:

\begin{lemma}\label{lem: functioriality of pullback}
Let \((G, \mf{S})\) be a hierarchically hyperbolic group and let \(H, K\) be groups such that there exist quasi-isometric homomorphisms \(f_1 \colon K \to H\) and \(f_2 \colon H \to G\). Let \(f = f_2 \circ f_1\). Then \(f^\ast \mf{S} = f^\ast_1 \left(f^\ast_2 \mf{S}\right)\), and the map \(f\) 
is a glueing homomorphism. 
\end{lemma}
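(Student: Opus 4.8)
The plan is to reduce the statement to a direct application of Lemma~\ref{lemma: pulling back hierarchical structures} together with a functoriality check of the two pullback operations involved. Concretely, both $f_1$ and $f_2$ are quasi-isometric homomorphisms, hence so is $f = f_2 \circ f_1$ (the composition of two quasi-isometries is a quasi-isometry, and $|\Ker(f)| < \infty$, $|G:\Im(f)| < \infty$ follow from the corresponding statements for $f_1$ and $f_2$, since $\Ker(f_1) \leq \Ker(f)$ maps onto $\Ker(f) / \Ker(f_1) \hookrightarrow \Ker(f_2)$, and $\Im(f) \supseteq f_2(\Im(f_1))$ has finite index). Thus Lemma~\ref{lemma: pulling back hierarchical structures} applies to each of $f_1$, $f_2$, $f$, yielding hierarchically hyperbolic structures $f_1^\ast \mf{S}_H$ on $K$, $f_2^\ast \mf{S}$ on $H$, and $f^\ast \mf{S}$ on $K$, and in each case the map in question is a glueing hieromorphism between the respective groups.

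The main point is then to verify $f^\ast \mf{S} = f_1^\ast(f_2^\ast \mf{S})$, and this is immediate from the explicit description in Lemma~\ref{lemma: pulling back hierarchical structures}. First I would note that the index set, the hyperbolic spaces, the nesting/orthogonality/transversality relations, and the maps $\rho_V^U$ are all simply \emph{inherited unchanged} from $\mf{S}$ in every pullback, so $f^\ast \mf{S}$ and $f_1^\ast(f_2^\ast \mf{S})$ agree on all of this data by construction. The only thing to check is that the projections coincide: the projection of $f_1^\ast(f_2^\ast \mf{S})$ associated to $U$ is, by definition applied twice, $\bigl(\pi_U^{f_2^\ast \mf{S}}\bigr) \circ f_1 = \bigl(\pi_U^G \circ f_2\bigr) \circ f_1 = \pi_U^G \circ (f_2 \circ f_1) = \pi_U^G \circ f$, which is exactly the projection defining $f^\ast \mf{S}$. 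Hence the two hierarchically hyperbolic structures on $K$ are literally equal, not merely isomorphic.

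Finally, to conclude that $f$ is a glueing homomorphism, I would invoke the last clause of Lemma~\ref{lemma: pulling back hierarchical structures} directly: applying that lemma to the quasi-isometric homomorphism $f \colon K \to G$ (whose target carries the structure $\mf{S}$, and whose induced structure on $K$ is $f^\ast \mf{S}$, which we have just identified with $f_1^\ast(f_2^\ast \mf{S})$) shows that $f$ is a glueing hieromorphism between $(K, f^\ast \mf{S})$ and $(G, \mf{S})$. There is essentially no obstacle here; the only mild subtlety is bookkeeping the constants — the quasi-isometry constants of $f$ depend on those of $f_1$ and $f_2$, and the coarse-Lipschitz / quasi-isometric-embedding constants in the hieromorphism data compose in the usual way — but since ``glueing hieromorphism'' only requires the existence of \emph{some} uniform constants, composing the two given families of constants suffices, and no new estimate is needed.
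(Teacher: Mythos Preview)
Your proposal is correct and matches the paper's approach: the paper gives no explicit proof, stating only that the lemma follows immediately from Lemma~\ref{lemma: pulling back hierarchical structures}, and your argument is precisely the natural unpacking of that claim (composition of quasi-isometric homomorphisms is again one, the pullback data is inherited unchanged except for projections, and projections compose associatively).
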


\subsection{Linearly parametrizable graph of groups}

\begin{defn}\label{def: linearly parametrized}
Let $\mathcal{G}$ be a graph of groups. We say that \(\mathcal{G}\) is \emph{linearly parametrized} if there is a map \(\Phi \colon \pi_1(\mathcal{G}) \to \mathbb{D}_\infty\) such that for each vertex or edge group \(G\), the restriction \(\Phi|_G\) has finite kernel and finite-index image (i.e $\Phi|_G$ is a quasi-isometric homomorphism). 
\end{defn}

\begin{thm}\label{thm: inducing hhg structure}
Let $\mathcal{G}$ be a linearly parametrized graph of groups and let \(G = \pi_1 (\mathcal{G})\).  Then, $G$ admits a hierarchically hyperbolic group structure.
\end{thm}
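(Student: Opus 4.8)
The plan is to feed the graph of groups $\mathcal{G}$ into the combination theorem for hierarchically hyperbolic groups (Theorem~\ref{comb_thm_ver2}), after equipping every vertex and edge group with a hierarchically hyperbolic structure \emph{pulled back from a common reference}, namely $\dihed$. First I would fix a hierarchically hyperbolic group structure $\mf{S}_{\dihed}$ on $\dihed$; this is immediate since $\dihed$ is hyperbolic, so $\mf{S}_{\dihed}$ may be taken to be the single hyperbolic space $\dihed$ itself with the trivial nesting/orthogonality relations. Such a structure trivially has the intersection property and clean containers, since there is nothing to check beyond the maximal element. Then, using that $\mathcal{G}$ is linearly parametrized, for each vertex or edge group $G_u$ the restriction $\Phi|_{G_u}\colon G_u \to \dihed$ is a quasi-isometric homomorphism, so by Lemma~\ref{lemma: pulling back hierarchical structures} we obtain a hierarchically hyperbolic group structure $\Phi|_{G_u}^{\ast}(\mf{S}_{\dihed})$ on $G_u$, and $\Phi|_{G_u}$ is realized as a glueing hieromorphism $G_u \to \dihed$.

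Next I would check the two hypotheses of Theorem~\ref{comb_thm_ver2}. For hypothesis (2), each pulled-back structure on a vertex group has index set equal to $\mf{S}_{\dihed}$ with the same relations, so it inherits the intersection property and clean containers from $\mf{S}_{\dihed}$; concretely $\wedge$ is defined on the two-element-plus-$\emptyset$ index set in the only possible way, and $\cont_\perp^Z U$ is never needed since no two distinct elements are orthogonal. For hypothesis (1), I need each edge map $\phi_{e^\pm}\colon G_e \to G_{e^\pm}$ to be a $(K,K)$--coarsely Lipschitz glueing hieromorphism. Coarse Lipschitzness is automatic: $\phi_{e^\pm}$ is an injection of a $2$-ended group into a $2$-ended group, hence has finite-index image and finite kernel, so it is a quasi-isometric homomorphism and in particular coarsely Lipschitz. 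The key point is that the two structures are compatible: since $\Phi|_{G_e} = \Phi|_{G_{e^\pm}} \circ \phi_{e^\pm}$ by the definition of linear parametrization, Lemma~\ref{lem: functioriality of pullback} applied with $f_1 = \phi_{e^\pm}$, $f_2 = \Phi|_{G_{e^\pm}}$ and $f = \Phi|_{G_e}$ gives $\Phi|_{G_e}^{\ast}(\mf{S}_{\dihed}) = \phi_{e^\pm}^{\ast}\big(\Phi|_{G_{e^\pm}}^{\ast}(\mf{S}_{\dihed})\big)$ and tells us that $\phi_{e^\pm}$ is itself a glueing hieromorphism between the pulled-back structures on $G_e$ and on $G_{e^\pm}$. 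Thus both hypotheses hold.

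Finally, applying Theorem~\ref{comb_thm_ver2} yields that $\pi_1(\mathcal{G}) = G$ is a hierarchically hyperbolic group (in fact one with the intersection property and clean containers), which is the conclusion. The main obstacle — and the reason the notion of linear parametrization is introduced — is precisely the compatibility requirement buried inside the definition of glueing hieromorphism: the index set of $G_e$ must embed into the index sets of \emph{both} incident vertex groups in a way consistent with the structural maps, and since almost-malnormality is not assumed, the images of different edge groups inside one vertex group may overlap, so one cannot build the vertex structures edge-by-edge independently. Routing every structure through the single fixed structure $\mf{S}_{\dihed}$ on $\dihed$ resolves this uniformly; the only thing to be careful about in writing up is that Lemma~\ref{lem: functioriality of pullback} genuinely gives the glueing hieromorphism property for $\phi_{e^\pm}$ (and not merely that the two pulled-back structures coincide as sets), together with the bookkeeping that the identification $\mf{S}_{G_e} = \mf{S}_{G_{e^\pm}}$ of index sets is exactly the one $\phi_{e^\pm}^{\diamondsuit}$ required by Theorem~\ref{comb_thm_ver2}.
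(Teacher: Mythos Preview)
Your proposal is correct and follows essentially the same route as the paper: equip $\dihed$ with the trivial one-element HHG structure, pull it back along each $\Phi|_{G_u}$ via Lemma~\ref{lemma: pulling back hierarchical structures}, observe that the edge maps $\phi_{e^\pm}$ are quasi-isometric homomorphisms between 2-ended groups, invoke Lemma~\ref{lem: functioriality of pullback} (using $\Phi|_{G_e} = \Phi|_{G_{e^\pm}}\circ\phi_{e^\pm}$) to get that they are glueing hieromorphisms, and apply Theorem~\ref{comb_thm_ver2}. The only cosmetic wrinkle is your phrase ``two-element-plus-$\emptyset$ index set'': the pulled-back index set has a \emph{single} element (so $\mf{S}\cup\{\emptyset\}$ has two), but this does not affect the argument.
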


\begin{proof}
Let \(\Phi \colon G \to \mathbb{D}_{\infty}\) be the map witnessing the linear parametrization of \(G\). 
Equip \(\mathbb{D}_\infty\) with the trivial hierarchically hyperbolic group structure \((\mathbb{D}_\infty, \mf{T})\), where \(\mf{T}\) contains a single element \(T\) and \(\fontact T\) coincides with a Cayley graph for \(\mathbb{D}_\infty\). Endow every vertex \(G_v\) with the pullback structure \((G_v, {\Phi |_{G_v}}^\ast (\mf{T}))\), and endow analogously the edge groups. 
We claim that this turns \(\mc{G}\) into a graph of groups that satisfies the hypothesis of Theorem \ref{comb_thm_ver2}. Since the HHG structure on each vertex group consists of a single element, it satisfies the intersection property and clean containers. Let \(e\) be an edge, \(v\) a  vertex incident to \(e\), and let \(\varphi \colon G_e \to G_v\) be an injective homomorphism. Since both \(G_e\) and \(G_v\) are infinite virtually cyclic, we have that \(\varphi\) is a quasi-isometric homomorphism. Thus, by Lemma \ref{lem: functioriality of pullback}, it induces a glueing hieromorphism. Since \(e\) and \(v\) were generic, the result follows. 
\end{proof}

Thus, from now on we will focus on determining which graphs of 2-ended groups can be linearly parametrized. We begin by showing which amalgams and HNN extensions of linearly parametrizable groups can be linearly parametrized.

\begin{lemma}\label{lemma: amalgam of linearly parametrized}
Let $\mathcal{G}_1$ and \(\mc{G}_2\) be linearly parametrized graphs of groups, and let \(\mathcal{G}\) be a graph of groups obtained connecting \(\mc{G}_1\) and \(\mc{G}_2\) with an edge such that the corresponding edge group is 2-ended. Then, $\mc{G}$ is linearly parametrized.
\end{lemma}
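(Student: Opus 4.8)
The plan is to exhibit $\pi_1(\mathcal{G})$ as the amalgamated free product $\pi_1(\mathcal{G}_1)\ast_{G_e}\pi_1(\mathcal{G}_2)$ over the new edge group $G_e$ (choosing a spanning tree of $\mathcal{G}$ that contains spanning trees of $\mathcal{G}_1$ and $\mathcal{G}_2$ together with the connecting edge $e$, so that $t_e=1$), and to glue the homomorphisms $\Phi_i\colon\pi_1(\mathcal{G}_i)\to\mathbb{D}_\infty$ witnessing the two linear parametrizations, after a mild correction. Write $\phi_{e^-}\colon G_e\hookrightarrow G_v\subseteq\pi_1(\mathcal{G}_1)$ and $\phi_{e^+}\colon G_e\hookrightarrow G_w\subseteq\pi_1(\mathcal{G}_2)$ for the edge embeddings. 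I would first observe that every vertex and edge group of $\mathcal{G}_i$ is quasi-isometric to $\mathbb{D}_\infty$, hence $2$-ended, so the $\phi_{e^\pm}$ are injections between $2$-ended groups; consequently the composites $\psi_1=\Phi_1\circ\phi_{e^-}$ and $\psi_2=\Phi_2\circ\phi_{e^+}$ are quasi-isometric homomorphisms $G_e\to\mathbb{D}_\infty$. By Lemma~\ref{lem: Kernel to dihedral is always the same} they have the same finite kernel $N$, so both factor through the quotient $p\colon G_e\to G_e/N$ as injections with finite-index image.

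The heart of the argument, and the step I expect to require actual work, is to arrange that $\psi_1$ and $\psi_2$ become equal after post-composition with suitable quasi-isometric endomorphisms of $\mathbb{D}_\infty$; one cannot in general correct only one of the two maps, since their images may have different index in $\mathbb{D}_\infty$. I would record that the injective endomorphisms of $\mathbb{D}_\infty=\langle r,s\mid srs=r^{-1},\,s^2\rangle$ with finite-index image are precisely the maps $E_{k,j}\colon r\mapsto r^k$, $s\mapsto sr^j$ with $k\neq0$, and that they compose by $E_{a,b}\circ E_{k,j}=E_{ak,\,b+aj}$. Since $G_e/N$ is isomorphic to a finite-index subgroup of $\mathbb{D}_\infty$, it is either infinite cyclic or infinite dihedral, and in each case a direct computation produces the desired $\beta_1,\beta_2$: in the dihedral case an identification $G_e/N\cong\mathbb{D}_\infty$ writes $\psi_i=E_{k_i,j_i}\circ p'$ for a fixed surjection $p'$, and then $\beta_1=E_{k_2,0}$, $\beta_2=E_{k_1,\,k_2 j_1-k_1 j_2}$ satisfy $\beta_1\circ\psi_1=\beta_2\circ\psi_2=E_{k_1k_2,\,k_2 j_1}\circ p'$; in the cyclic case $\psi_i$ is recorded by an exponent $m_i\neq0$ (the image lying in $\langle r\rangle$), and $\beta_1=E_{m_2,0}$, $\beta_2=E_{m_1,0}$ work.

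To conclude, set $\Phi_i'=\beta_i\circ\Phi_i\colon\pi_1(\mathcal{G}_i)\to\mathbb{D}_\infty$. As a composition of a quasi-isometric homomorphism with an injective quasi-isometric endomorphism of $\mathbb{D}_\infty$, each $\Phi_i'$ still restricts to a quasi-isometric homomorphism on every vertex and edge group of $\mathcal{G}_i$; and by construction $\Phi_1'\circ\phi_{e^-}=\beta_1\circ\psi_1=\beta_2\circ\psi_2=\Phi_2'\circ\phi_{e^+}$, so $\Phi_1'$ and $\Phi_2'$ agree on $G_e$. By the universal property of the amalgamated free product they assemble into a homomorphism $\Phi\colon\pi_1(\mathcal{G})\to\mathbb{D}_\infty$ with $\Phi|_{\pi_1(\mathcal{G}_i)}=\Phi_i'$. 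Every vertex or edge group of $\mathcal{G}$ either lies in one of the $\mathcal{G}_i$, where $\Phi$ restricts to $\Phi_i'$, or is the new edge group $G_e$, where $\Phi$ restricts to $\beta_1\circ\psi_1$; in every case the restriction is a quasi-isometric homomorphism, so $\Phi$ witnesses that $\mathcal{G}$ is linearly parametrized, as required.
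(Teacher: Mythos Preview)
Your proof is correct and follows essentially the same route as the paper: reduce to the universal property of the amalgamated product, use Lemma~\ref{lem: Kernel to dihedral is always the same} to match the kernels of the two restrictions $\psi_i$, and then post-compose each $\Phi_i$ with a finite-index injective endomorphism of $\mathbb{D}_\infty$ so that the restrictions to $G_e$ agree. The only difference is organizational: the paper separates this last step into an automorphism (their $\rho_l$, normalizing the ``shift'') followed by a dilation (their $\tau_k$, matching the index), whereas you package both into a single family $E_{k,j}$ with an explicit composition law and solve directly for $\beta_1,\beta_2$; your version is arguably cleaner and makes the case split more transparent.
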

\begin{proof}
Let \(e\) be the added edge and let \(G_e\) be the associated group. We want to show that there are maps \(\Phi_1 \colon \pi_1(\mc{G}_1) \to \mathbb{D}_\infty\) and \(\Phi_2 \colon \pi_1(\mc{G}_2) \to \mathbb{D}_\infty\) that agree on \(G_e\) such that their restriction to vertex/edges subgroups has finite kernel and finite index image. Then the universal property of the amalgamated product yields the desired map \(\Phi \colon \pi_1 (\mc{G}) \to \mathbb{D}_\infty\).

Let \(\Phi_1 \colon \pi_1(\mc{G}_1) \to \mathbb{D}_\infty\) be the function parametrizing \(\mc{G}_1\), and let \(\Phi_2\) be the one for \(\mc{G}_2\). Consider the two restrictions \(\Phi_i|_{G_e}\), for \(i\in \{1,2\}\). Since \(G_e\) is an infinite group by assumption, its image has finite index in the vertex groups adjacent to it. In particular, the restrictions \(\Phi_i|_{G_e}\) have finite kernel and finite index image. By Lemma \ref{lem: Kernel to dihedral is always the same}, we conclude \(\Ker (\Phi_1|_{G_e}) = \Ker (\Phi_2|_{G_e})\). We concentrate now on the images \(\Phi_i(G_e)\) which, by the previous argument, are isomorphic. An infinite index subgroup of the dihedral group has to have the form \(\langle s^k \rangle\) or \(\langle s^k, rs^l\rangle\), for some \(k, l \in \mathbb{Z}-\{0\}\). Suppose that the subgroups \(\Phi_i(G_e)\) have the form \(\langle s^{k_i}, ra^{l_i}\rangle \) respectively (the case where they are both cyclic is analogous). Note that the map \(\rho_l \colon \mathbb{D}_\infty \to \mathbb{D}_\infty \) which sends \(s \to s\) and \(r \to rs^l\) is an isomorphism. Thus, up to postcomposing \(\Phi_i\) with \(\rho_{-l_i}\) we can assume that the images \(\Phi_i(G_e)\) have the form \(\langle s^{k_i}, r\rangle \) respectively. 

Let \(\tau_{k} \colon \mathbb{D}_\infty \to \mathbb{D}_\infty\) be the map that sends \(s \to s^k\) and \(r \to r\). Note that \(\tau_k\) is an injection with finite index image, thus postcomposing with \(\tau_k\) does not alter the fact that a map has finite kernel and finite index image. 
It is now straightforward to verify that the maps \(\Phi_1 : = \tau_{k_2} \circ\Phi_1\) and \(\Phi_2 := \tau_{k_1} \circ \Phi_2\) satisfy the desired requirements.
\end{proof}

A result of this type in HNN extensions does not hold in general, as the following example shows:
\begin{example}
If $H=\langle a\rangle$ is an infinite cyclic group, then it can be linearly parametrized via the map $\Phi:H\to\mathbb{D}_{\infty}$ that sends $a\mapsto r$. Let us construct an HNN extension over $H$ by adding a stable letter $t$ that conjugates $a^2$ to $a^3$. That is to say, $G=H\ast_{ta^2t^{-1}=a^3}$. 

Assume that $\Phi$ can be extended to $\widehat{\Phi}:G\to \mathbb{D}_{\infty}$ that linearly parametrizes $G$. As a consequence we obtain that the relation $\widehat{\Phi}(t)\widehat{\Phi}(a)^2\widehat{\Phi}(t)^{-1}=\widehat{\Phi}(a)^3$ holds in $\mathbb{D}_{\infty}$. As virtually cyclic groups are balanced, $\widehat{\Phi}(t)$ must be trivial. Since $\widehat{\Phi}(a)=\Phi(a)=r$, we obtain as a consequence that $r
^2=r^3$ in $\mathbb{D}_{\infty}$, which is a contradiction. Thus, $\Phi$ cannot be extended to a linear parametrization of $G$.
\end{example}

To determine which HNN extensions of linearly parametrizable groups can be linearly parametrized, we introduce the notion of balanced edge.

\begin{defn}[\textbf{Balanced edge}]\label{def: balanced edges}
Let \(\mc{G}\) be a graph of groups and \(e\) be an edge of \(\mc{G}\). We say that \(e\) is \emph{balanced} if the following holds. 
Let \(\mc{H} = \mc{G}- e\), and let \(\phi_+, \phi_- \colon G_e \to \pi_1(\mc{H})\) be the morphisms associated to \(e\). Then for every infinite order element \(a \in G_e\), if there exists \(h \in \pi_1(\mc{H})\) such that 
\begin{equation}\label{equation: unbalanced edge}
h\phi_{+}(a)^{i}h^{-1}= \phi_{-}(a)^j,
\end{equation}
it follows that \(\vert i \vert = \vert j\vert\).
\end{defn}

\begin{rmk}\label{remark: edges in tree are balanced}
Note that if an edge $e$ in a graph of groups $\mathcal{G}$ is unbalanced then $\pi_1(\mathcal{G})$ is unbalanced. Moreover, by Corollary \ref{corollary: tree product and balance} we have that unbalanced edges can never exist in a graph of groups where the underlying graph is a tree.
\end{rmk}

\begin{lemma}\label{lem: parametrizing HNN}
Let \(\mc{H}\) be a linearly parametrized graph of groups and let \(\mc{G}\) be obtained from \(\mc{H}\) by adding an edge \(e\) with infinite associated edge group. Then \(\mc{G}\) is linearly parametrized if and only if \(e\) is balanced. 
\end{lemma}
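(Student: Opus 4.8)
The plan is to prove both directions, with the interesting content being the "if" direction. Let me set up notation: write $\mc{H} = \mc{G} - e$, let $\Psi \colon \pi_1(\mc{H}) \to \dihed$ be the map witnessing the linear parametrization, and let $A = \phi_+(G_e), B = \phi_-(G_e)$ be the two images of the (2-ended) edge group in $\pi_1(\mc{H})$, so that $\pi_1(\mc{G}) = \pi_1(\mc{H}) \ast_{t\phi_+(x)t^{-1} = \phi_-(x)}$.

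For the "only if" direction, suppose $\mc{G}$ is linearly parametrized by $\widehat{\Psi} \colon \pi_1(\mc{G}) \to \dihed$; since $\dihed$ is virtually cyclic, hence balanced (Lemma~\ref{lemma: finite index and balance} applied to $\ZZ \leq \dihed$, or directly), the relation $\widehat{\Psi}(h)\widehat{\Psi}(\phi_+(a))^i \widehat{\Psi}(h)^{-1} = \widehat{\Psi}(\phi_-(a))^j$ together with the fact that $\widehat{\Psi}(\phi_\pm(a))$ has infinite order (the restriction to $G_e$ is a quasi-isometric homomorphism) forces $|i| = |j|$ by balancedness of $\dihed$ — here I use that $\widehat{\Psi}(\phi_+(a))$ and $\widehat{\Psi}(\phi_-(a))$ are conjugate in $\dihed$ since $t\phi_+(a)t^{-1} = \phi_-(a)$, so a power of one being conjugate to a power of the other is exactly the balancedness condition. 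This gives that $e$ is balanced.

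For the "if" direction, assume $e$ is balanced. I want to build $\widehat{\Psi} \colon \pi_1(\mc{G}) \to \dihed$ extending $\Psi$ (after suitable post-composition by automorphisms/injections of $\dihed$) by specifying $\widehat{\Psi}(t)$. The universal property of the HNN extension tells me this works precisely when there is an element $\tau \in \dihed$ with $\tau \Psi(\phi_+(x)) \tau^{-1} = \Psi(\phi_-(x))$ for all $x \in G_e$. The obstacle is that this equation need not hold for the given $\Psi$; I must first modify $\Psi$. The strategy mirrors Lemma~\ref{lemma: amalgam of linearly parametrized}: both $\Psi|_A$ and $\Psi|_B$ have finite kernel and finite-index image, and by Lemma~\ref{lem: Kernel to dihedral is always the same} the kernel of $\Psi \circ \phi_+$ and $\Psi \circ \phi_-$ on $G_e$ agree (both equal the unique finite-kernel-to-dihedral kernel of $G_e$), so $\Psi(A)$ and $\Psi(B)$ are abstractly isomorphic finite-index subgroups of $\dihed$. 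Using the isomorphisms $\rho_l$ (sending $r \mapsto rs^l$) and the injections $\tau_k$ (sending $s \mapsto s^k$) from the proof of Lemma~\ref{lemma: amalgam of linearly parametrized}, I normalize so that $\Psi(A)$ and $\Psi(B)$ are literally the same subgroup $D_0 = \langle s^{k}, r \rangle$ (or $\langle s^k \rangle$ in the cyclic case) of $\dihed$ — here I must be careful that post-composing $\Psi$ with such a map does not spoil the finite-kernel / finite-index property on the \emph{other} vertex and edge groups of $\mc{H}$, which is fine because $\tau_k$ and $\rho_l$ are themselves quasi-isometric (finite index image, trivial kernel) endomorphisms of $\dihed$, so the composition stays quasi-isometric on every $G_u$.

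Once $\Psi(A) = \Psi(B) = D_0$, the composite isomorphism $\theta = (\Psi|_B \circ \phi_-) \circ (\Psi|_A \circ \phi_+)^{-1}$ is a self-map of the finite index subgroup $D_0 \leq \dihed$; I claim it extends to (or is induced by) conjugation by some $\tau \in \dihed$. This is where balancedness of $e$ enters in an essential way: the hypothesis that $h \phi_+(a)^i h^{-1} = \phi_-(a)^j$ forces $|i| = |j|$ translates, after applying $\Psi$, into a statement that $\theta$ sends the (unique up to finite index and inversion) translation axis of $D_0$ to itself \emph{without scaling}, i.e. $\theta$ acts on the "infinite cyclic part" of $D_0$ by $\pm 1$; combined with the classification of automorphisms of $\dihed$ (which are all inner, $\mathrm{Out}(\dihed)$ being trivial, up to the finite-index bookkeeping), one gets $\theta = \mathrm{conj}_\tau|_{D_0}$ for a suitable $\tau \in \dihed$. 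I expect the main obstacle to be precisely this step: carefully extracting from the combinatorial balancedness condition the statement that the induced map on $D_0$ is scaling-free, and handling the finite-order ("reflection") part of the dihedral group and the several sub-cases ($D_0$ cyclic versus dihedral, and the parity/normalization of the indices $k$) without the argument ballooning. Having produced $\tau$, I set $\widehat{\Psi}|_{\pi_1(\mc{H})} = \Psi$ and $\widehat{\Psi}(t) = \tau$; the HNN relations are satisfied by construction, so $\widehat{\Psi}$ is a well-defined homomorphism $\pi_1(\mc{G}) \to \dihed$, and its restriction to every vertex or edge group of $\mc{G}$ (all of which lie in $\pi_1(\mc{H})$, or are the edge group $G_e$ with $\widehat{\Psi}|_{G_e} = \Psi \circ \phi_+$) is still a quasi-isometric homomorphism. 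Hence $\mc{G}$ is linearly parametrized, completing the proof.
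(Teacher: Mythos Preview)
Your overall shape is right and matches the paper's approach: both of you ultimately need an element of $\dihed$ to serve as the image of the stable letter $t$, and the paper phrases this as a retraction $\rho\colon \dihed\ast_{\psi'}\to\dihed$ with $\rho(t)\in\{1,s\}$, which is exactly your $\tau$. However, there are two genuine gaps in your ``if'' direction. First, the normalisation step does not carry over from Lemma~\ref{lemma: amalgam of linearly parametrized}: in the amalgam case there are \emph{two} independent parametrizations $\Phi_1,\Phi_2$ that can be post-composed separately, whereas here you have a single $\Psi$ on $\pi_1(\mc{H})$, so post-composing with any $\rho_l$ or $\tau_k$ moves $\Psi(A)$ and $\Psi(B)$ simultaneously and cannot force them to coincide. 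Second, and more importantly, balancedness is a conditional statement (``\emph{if} $h\phi_+(a)^ih^{-1}=\phi_-(a)^j$ \emph{then} $|i|=|j|$''); to extract the no-scaling conclusion you must first know that such an $h$ exists. The paper supplies this by observing that since $\mc{H}$ is linearly parametrized every vertex group is $2$-ended and the graph is connected, so $A$ and $B$ are automatically commensurable in $\pi_1(\mc{H})$; only then does balancedness give $|i|=|j|$, and applying $\Psi$ yields that $\Psi(A)\cap\langle r\rangle$ and $\Psi(B)\cap\langle r\rangle$ have the same index in $\langle r\rangle$. You never invoke this commensurability, so your passage from balancedness to ``$\theta$ acts without scaling'' is unjustified as written.

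There is also a factual slip: $\mathrm{Out}(\dihed)\cong\ZZ/2$, not trivial (the automorphism $r\mapsto r,\ s\mapsto sr$ is not inner). This $\ZZ/2$ is precisely the parity obstruction to realising an arbitrary isomorphism between finite-index dihedral subgroups by conjugation in $\dihed$, so you cannot dismiss it. The paper sidesteps all of this by never trying to make $\Psi(A)=\Psi(B)$: once $|n|=|m|$ is established via commensurability plus balancedness, it directly writes down $\rho(t)=s^{|n-m|/2|n|}$ (i.e.\ $1$ or $s$ according to the sign) and checks the HNN relation. Your argument becomes correct if you (i) drop the normalisation and instead prove $|n|=|m|$ via the commensurability observation, and (ii) replace the appeal to $\mathrm{Out}(\dihed)$ by an explicit construction of $\tau$ (possibly after a single post-composition with $\tau_2$ to kill the parity obstruction in the dihedral case).
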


\begin{proof}
Let \(A,B\) be the images of the edge group, and let \(\psi \colon A \to B\) be the induced isomorphism. Let \(\Phi\colon H = \pi_1(\mc{H}) \to \dihed\) be the map that linearly parametrizes \(H\). As usual, we use the presentation $\mathbb{D}_{\infty}=\langle r,s\mid srs^{-1}=r^{-1},s^2=1\rangle$. We start by showing that the second condition implies the first. 

Consider the subgroups $\Phi(A), \Phi(B)\leq \mathbb{D}_\infty$. 
Note that every infinite order element of \(A\) has to be sent to \(r^n\) for some \(n \in \mathbb{Z}- \{0\}\). Indeed, those are the only infinite order elements of \(\mathbb{D}_{\infty}\), and since \(\Phi|_{A}\) has finite kernel, infinite order elements cannot be mapped to torsion ones. A similar argument applies for \(B\). 
Thus, \(\Phi(A) \cap \langle r \rangle\) has finite index in \(\langle r\rangle\).

Let $\lvert n\rvert$ and $\lvert m\rvert$ be the index of $\langle\Phi(A)\rangle\cap\langle r\rangle$ in $\langle r\rangle$ and of $\langle\Phi(B)\rangle\cap\langle r\rangle$ in $\langle r\rangle$ respectively. We now show that $\lvert n\rvert=\lvert m\rvert$. Let \(a \in A\) be such that \(\Phi(a)\) generates \(\Phi(A) \cap \langle r\rangle\). Observe that there exists \(h \in H\) and \(i > 0\) such that  $ha^ih^{-1}=\psi(a)^j$, for some \(j >0\). Indeed, since \(\mc{H}\) is linearly parametrized, all its vertices and edges groups are infinite virtually cyclic, and the underlying graph is connected. Thus, \(G_v\) and \(G_w\) are commensurable. By assumption, we need to have \(\lvert i\rvert= \lvert j\rvert\). Thus,  $ha^i h^{-1}=\psi(a)^i$ and, therefore, $\Phi(a)^i=\Phi(\psi(a))^{\pm j}$. By mutiplicativity of index of subgroups we obtain $\lvert\langle\Phi(a)\rangle:\langle r\rangle\rvert=\lvert\langle\Phi(\psi(a))\rangle:\langle r\rangle\rvert$. This shows that \(|n| \leq |m|\). The symmetric argument obtained choosing \(b \in B\) such that \(\Phi(b)\) generates \(\Phi(B) \cap \langle r \rangle\) and considering \(\psi^{-1}(b)\) provides the other inequality. 
Thus \(|n| = |m|\). 

Define a map $\psi':\Phi(A)\to\Phi(B)$  as $\psi'(\Phi(x))=\Phi(\psi(x))$.
By Lemma \ref{lem: Kernel to dihedral is always the same}, \(\mathrm{Ker}(\Phi)\vert_{A} = \mathrm{Ker}(\Phi)\vert_{B}\). Thus, \(\psi'\) is a well defined, injective homomorphism. Since \(\psi\) is surjective, so is \(\psi'\), showing that \(\psi'\) is an isomorphism. Since \(\Phi(a)\)  generates \(\Phi(A) \cap \langle r \rangle\) and \(\psi'(\Phi(a))\) generates \(\Phi(B) \cap \langle r \rangle\), we have \(\Phi(a) = r^m\), \(\Phi(\phi(a)) = r^n\) with \(\vert m \vert = \vert n \vert\).

In particular, \(\Phi\) extends to a homomorphism $\Phi':G\to(\mathbb{D}_{\infty})\ast_{\psi'}$. Consider the presentation  $(\mathbb{D}_{\infty})\ast_{\psi'} = \langle s,r,t\mid srs^{-1}=r^{-1},s^2=1, t\psi'(\Phi(x))t^{-1}=\Phi(x)\quad \forall x\in A\rangle$. Let $\rho:\mathbb{D}_{\infty}\ast_{\psi'}\to\mathbb{D}_{\infty}$ be defined as  $\rho(s)=s,\rho(r)=r$ and $\rho(t)=s^{\lvert n-m\rvert/2\lvert n\rvert}$. Then the map $\widetilde{\Phi}=\rho\circ\Phi':G\to\mathbb{D}_{\infty}$ linearly parametrizes \(G\).

To show that the first condition implies the second one, we argue by contradiction. Consider the presentation \(G = \langle H, t \vert tgt^{-1} = \psi (g), \forall g \in A \rangle\) and assume that for some $h\in H$ and infinite order \(a \in A\) we have $ha^ih^{-1}=\psi(a)^j$ with $\lvert i\rvert\neq\lvert j\rvert$.
Therefore,  $ta^it^{-1}=a^j$. Applying $\widetilde{\Phi}$ we have  $\widetilde{\Phi}(e)\widetilde{\Phi}(a)^i\widetilde{\Phi}(e)^{-1}=\widetilde{\Phi}(a)^j$. However, since $\mathbb{D}_{\infty}$ is virtually cyclic, by Lemma \ref{lemma: finite index and balance} it follows that $\lvert i\rvert$ must be equal to $\lvert j\rvert$, which is a contradiction.
\end{proof}

Combining the above two lemmas we obtain the following.
\begin{corollary}\label{coro: linearly parametr iff all edges balanced}
Let \(\mc{G}\) be a graph of groups with 2-ended vertex and edge groups. Then \(\mc{G}\) is linearly parametrizable if and only if all edges are balanced.
\end{corollary}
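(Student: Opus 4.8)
The plan is to prove both implications by building $\mc{G}$ from its vertices one edge at a time, invoking Lemma~\ref{lemma: amalgam of linearly parametrized} for the edges of a spanning tree and Lemma~\ref{lem: parametrizing HNN} for the remaining edges. The base case of this construction is that a single-vertex graph of groups with two-ended vertex group $G_v$ is linearly parametrized: by Lemma~\ref{lemma:basic_fact_virt_cyclic}, $G_v$ surjects with finite kernel onto $\ZZ$ or onto $\dihed$, and in the first case one postcomposes with the index-two embedding $\ZZ\cong\langle r\rangle\hookrightarrow\dihed$, so in either case $G_v$ admits a quasi-isometric homomorphism to $\dihed$.

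For the implication that all edges being balanced implies linear parametrizability, fix a spanning tree $T$ and enumerate $E(T)=\{e_1,\dots,e_k\}$ so that for each $i$ the subgraph $T_i$ with edges $e_1,\dots,e_i$ is connected, with $e_1$ a single edge on two vertices and each subsequent $e_i$ attaching exactly one new vertex. Starting from the base case and applying Lemma~\ref{lemma: amalgam of linearly parametrized} at each step — gluing the already-parametrized $\mc{G}|_{T_{i-1}}$ to a new single-vertex graph along the two-ended edge group $G_{e_i}$ — one concludes that $\mc{G}|_T$ is linearly parametrized. Now enumerate the non-tree edges $e_{k+1},\dots,e_m$, set $\mc{G}_k=\mc{G}|_T$, and let $\mc{G}_j$ be obtained from $\mc{G}_{j-1}$ by adjoining $e_j$. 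Since both endpoints of $e_j$ already lie in $V(T)$, each $\mc{G}_j$ is an HNN extension of $\mc{G}_{j-1}$, so by Lemma~\ref{lem: parametrizing HNN} it suffices to verify that $e_j$ is balanced \emph{in $\mc{G}_j$}, i.e.\ with respect to $\mc{G}_j-e_j=\mc{G}_{j-1}$. The hypothesis only gives that $e_j$ is balanced with respect to the larger graph $\mc{G}-e_j$; here I would invoke the injection $\pi_1(\mc{G}_{j-1})\hookrightarrow\pi_1(\mc{G}-e_j)$ coming from the fact that $\mc{G}_{j-1}$ is a connected subgraph of $\mc{G}-e_j$ containing $T$. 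This injection is the identity on vertex groups, hence on the images of $\phi_{e_j^{\pm}}$, so any relation $h\phi_+(a)^ih^{-1}=\phi_-(a)^j$ witnessing unbalancedness of $e_j$ in $\mc{G}_{j-1}$ would also witness it in $\mc{G}-e_j$; thus $e_j$ is balanced in $\mc{G}_{j-1}$ and $\mc{G}_j$ is linearly parametrized. Iterating up to $\mc{G}_m=\mc{G}$ gives the claim, where the tree edges $e_1,\dots,e_k$ need no hypothesis, being automatically balanced by Remark~\ref{remark: edges in tree are balanced}.

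For the converse, let $\Phi\colon\pi_1(\mc{G})\to\dihed$ witness a linear parametrization and let $e$ be any edge, with $\phi_+,\phi_-\colon G_e\to\pi_1(\mc{G}-e)$ its associated morphisms, whose images lie in $G_{e^+}$ and $G_{e^-}$. Suppose there are $h\in\pi_1(\mc{G}-e)$ and an infinite-order $a\in G_e$ with $h\phi_+(a)^ih^{-1}=\phi_-(a)^j$; applying the natural homomorphism $\pi_1(\mc{G}-e)\to\pi_1(\mc{G})$, this becomes a relation there between $\phi_{e^+}(a)$ and $\phi_{e^-}(a)$. Since these have infinite order and $\Phi$ has finite kernel on each of $G_{e^{\pm}}$, their $\Phi$-images are infinite-order elements of $\dihed$, hence of the form $r^p$ and $r^q$ with $p,q\neq 0$ (in the presentation $\dihed=\langle r,s\mid srs^{-1}=r^{-1},s^2\rangle$). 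Applying $\Phi$ to the defining relation $t_e\phi_{e^+}(a)t_e^{-1}=\phi_{e^-}(a)$ of $\pi_1(\mc{G})$, and using that conjugation in $\dihed$ carries $r^p$ to $r^{\pm p}$, yields $|p|=|q|$; applying $\Phi$ to $h\phi_+(a)^ih^{-1}=\phi_-(a)^j$ yields $|ip|=|jq|$. Together these force $|i|=|j|$, so $e$ is balanced.

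I expect the one point that requires real care is the bookkeeping in the first implication: the observation that balancedness of an edge descends from the full graph $\mc{G}-e$ to the intermediate subgraphs $\mc{G}_{j-1}$ produced along the way, via the subgraph-of-groups injection, which is precisely what makes the inductive application of Lemma~\ref{lem: parametrizing HNN} legitimate.
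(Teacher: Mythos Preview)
Your proof is correct and follows essentially the same strategy as the paper: build up from a spanning tree via Lemma~\ref{lemma: amalgam of linearly parametrized}, then add the non-tree edges one at a time via Lemma~\ref{lem: parametrizing HNN}. You are in fact more careful than the paper on one point --- the paper applies Lemma~\ref{lem: parametrizing HNN} at each step without remarking that balancedness of $e_j$ must descend from $\mc{G}-e_j$ to the intermediate $\mc{G}_{j-1}$, whereas you make this explicit via the subgraph-of-groups injection --- and for the converse you give a uniform direct computation in $\dihed$ in place of the paper's case split (tree edges via Remark~\ref{remark: edges in tree are balanced}, non-tree edges via Lemma~\ref{lem: parametrizing HNN}).
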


\begin{proof}
Assume that $\mc{G}$ is linearly parametrizable by a map $\Phi$ and let $e\in E(\mc{G})$. If $e$ belongs in a spanning tree of $\mc{G}$ then $e$ is a balanced edge by Remark \ref{remark: edges in tree are balanced}. Assume now that $e$ does not belong in a spanning tree. Note first that the subgraph of groups $\mc{G}-e$ of $\mc{G}$ is also linearly parametrizable, as we can use the restricted map $\widetilde{\Phi}=\Phi|_{\pi_1(\mc{G}-e)}$ as linear parametrization. If $e$ is unbalanced, then by Lemma \ref{lem: parametrizing HNN} we obtain that $\widetilde{\Phi}$ cannot be extended to $\pi_1(\mc{G}-e)\ast_{t_e}\cong\pi_1(\mc{G})$, which is a contradiction. Thus, every edge $e$ must be balanced.

To show the converse, let $T$ be a spanning tree in $\mc{G}$. Since every vertex group is 2-ended, we can repeatedly apply Lemma \ref{lemma: amalgam of linearly parametrized} to show that the subgraph of groups $\mc{G}|_T$ is linearly parametrizable. If every edge in $\mc{G}$ is balanced, then we can add one by one the remaining edges in $\mc{G}$ to $T$ and apply Lemma \ref{lem: parametrizing HNN} at each step to obtain the result.
\end{proof}

\subsection{Characterizations of hierarchical hyperbolicity}

With the following lemma, we establish a relation between those graphs of groups that can be linearly parametrized and those which have balanced fundamental group.

\begin{lemma}\label{Lem: unbalanced group iff unbalanced edge}
Let \(\mc{G}\) be a graph of groups with balanced vertex groups. Then \(\pi_1(\mc{G})\) is unbalanced if and only if it contains an unbalanced edge.
\end{lemma}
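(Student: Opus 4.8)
The plan is to prove both directions, with the forward direction (unbalanced edge $\Rightarrow$ unbalanced fundamental group) being essentially Remark~\ref{remark: edges in tree are balanced}, and the reverse direction (unbalanced group $\Rightarrow$ unbalanced edge) requiring real work via induction on the structure of the graph of groups. For the easy direction: if $e$ is an unbalanced edge then there are an infinite order $a \in G_e$ and $h \in \pi_1(\mc{G}-e)$ with $h\phi_+(a)^i h^{-1} = \phi_-(a)^j$ and $|i|\neq|j|$; inside $\pi_1(\mc{G})$ the stable letter $t_e$ conjugates $\phi_+(a)$ to $\phi_-(a)$, so $(t_e^{-1}h)\phi_+(a)^i(t_e^{-1}h)^{-1} = \phi_+(a)^j$ exhibits an unbalanced element. (This is exactly Remark~\ref{remark: edges in tree are balanced}, which I would simply cite.)

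For the reverse direction I would argue by induction on the number of edges of $\mc{G}$, or equivalently decompose $\pi_1(\mc{G})$ along a spanning tree $T$ and the remaining edges. First handle the case where the underlying graph is a tree: by Corollary~\ref{corollary: tree product and balance}, a tree of balanced groups is balanced, so if $\pi_1(\mc{G})$ is unbalanced the graph is not a tree, hence has at least one edge $e \notin E(T)$, and $\pi_1(\mc{G}) = \pi_1(\mc{G}-e)\ast_{t_e}$ is an HNN extension. Now I would like to apply the HNN machinery from Section~\ref{section: graph of groups and balanced groups}: if $\pi_1(\mc{G}-e)$ is already unbalanced, then by induction on the number of edges it contains an unbalanced edge (the base case being the single-vertex, no-edge graph, which is just a balanced vertex group, vacuously fine). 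So we may assume $H := \pi_1(\mc{G}-e)$ is balanced. Then $\pi_1(\mc{G})$ is an HNN extension of the \emph{balanced} group $H$ with $2$-ended associated subgroups $A = \phi_+(G_e)$, $B = \phi_-(G_e)$. Applying Corollary~\ref{corollary: HNN and balance}: since $\pi_1(\mc{G})$ is unbalanced, the criterion of that corollary fails, so there exist infinite order $a \in A$, $b \in B$ with $t_e a t_e^{-1} = b$ and some $h \in H$ with $h a^i h^{-1} = b^j$ but $|i|\neq|j|$. Translating back through $\phi_+,\phi_-$, this says precisely that $e$ is an unbalanced edge of $\mc{G}$, completing the induction.

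The one subtlety I want to flag is that the induction is on the number of edges \emph{outside a spanning tree} plus the tree itself — when I remove $e \notin E(T)$, the graph $\mc{G}-e$ still has all vertex groups balanced (they are unchanged), so the inductive hypothesis applies directly; and when $\mc{G}-e$ turns out to be a tree, Corollary~\ref{corollary: tree product and balance} forces it to be balanced, landing us in the clean HNN case above. So the induction terminates. The main obstacle, such as it is, lies not in the combinatorics but in having the right HNN lemma available: everything hinges on Corollary~\ref{corollary: HNN and balance}, whose hypothesis requires $H$ to be balanced with $2$-ended associated subgroups, which is exactly the situation we have arranged. I should double-check that Corollary~\ref{corollary: HNN and balance} is stated for general (possibly torsion-containing) $2$-ended associated subgroups rather than just infinite cyclic ones — the excerpt's discussion preceding Proposition~\ref{prop: balance and absence of conjugation} indicates it was modified to allow torsion, so this is fine. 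One last bookkeeping point: Corollary~\ref{corollary: HNN and balance} phrases the unbalancedness witness in terms of an $h \in H$ conjugating a power of $a$ to a power of $b$ with $|i| \neq |j|$; I should verify this matches Definition~\ref{def: balanced edges} of balanced edge verbatim (it does, after identifying $A,B$ with $\phi_+(G_e), \phi_-(G_e)$ and pulling the relation back along the isomorphisms $\phi_\pm$ to $G_e$), so that "the HNN criterion fails" and "$e$ is unbalanced" are literally the same statement.
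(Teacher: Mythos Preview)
Your proposal is correct and follows essentially the same approach as the paper: both arguments reduce to finding an edge $e$ outside a spanning tree such that $\pi_1(\mc{G}-e)$ is balanced while $\pi_1(\mc{G})$ is not, and then invoke Corollary~\ref{corollary: HNN and balance} to witness that $e$ is unbalanced. The only cosmetic difference is that the paper phrases this as building \emph{up} from a spanning tree $T$ (adding edges of $\Gamma\setminus T$ one at a time until the subgraph $\Lambda$ first has unbalanced fundamental group), whereas you phrase it as inducting \emph{down} by removing edges; these are the same search viewed from opposite ends. One minor bookkeeping point you leave implicit in the inductive case: if induction produces an unbalanced edge $e'$ of $\mc{G}-e$, you need that $e'$ is also unbalanced in $\mc{G}$, which holds because the witnessing conjugator lies in $\pi_1((\mc{G}-e)-e')\leq \pi_1(\mc{G}-e')$.
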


\begin{proof}
By definition, if \(\mc{G}\) contains an unbalanced edge then \(\pi_1(\mc{G})\) is unbalanced. 
Assume now that $\pi_1(\mc{G})$ is unbalanced. Let $T$ be a spanning tree of the underlying graph $\Gamma$ of $\mc{G}$. Start adding edges in $\Gamma\setminus T$ to $T$ until we obtain a subgraph $\Lambda$ of $\Gamma$ such that $\pi_1(\mc{G}|_{\Lambda})$ is unbalanced and $\pi_1(\mc{G}|_{\Lambda - e})$ is balanced. Split $\pi_1(\mc{G}|_{\Lambda})$ as $\pi_1(\mc{G}|_{\Lambda-e})\ast_{t_e}$, and let \(A, B \in \pi_1(\mc{G}|_{\Lambda-e})\) be the subgroups associated to the HNN extension. By Corollary \ref{corollary: HNN and balance}, there is an infinite order element \(a \in A\) and \(h \in \pi_1(\mc{G}|_{\Lambda-e})\) such that \[ha^{p}h^{-1} = ta^{q}t^{-1},\] for \(\vert p \vert \neq \vert q \vert\), showing that \(e\) is an unbalanced edge.\end{proof}

The final ingredient for the proof of the main theorem of this section is the so-called almost Baumslag--Solitar group, which we now introduce.

\begin{defn}\label{def: almost bs groups}[\textbf{Almost Baumslag--Solitar}]
A group $G$ is called an \emph{almost Baumslag--Solitar group} if it can be generated by two infinite order elements $a,s$ and the relation $sa^is^{-1}=a^j$ holds in $G$ for $i,j\neq 0$. An almost Baumslag--Solitar subgroup is \emph{non-Euclidean} if \(\vert i \vert \neq \vert j \vert\).
\end{defn}

\begin{rmk}\label{remark: almost BS groups}
Almost Baumslag--Solitar groups can look very different from traditional Baumslag--Solitar groups. For instance, any group with presentation $\langle a, b\mid ba^nb^{-1}=a^m, R\rangle$ where $R$ is a non-trivial relator on $\{a,b\}$ that does not forces $a$ nor $b$ to be of finite order is an almost Baumslag--Solitar group. Note, moreover, that an almost Baumslag--Solitar group can be obtained as a quotient of some Baumslag--Solitar group, but such quotient is not, in general, an isomorphism.\end{rmk}

A common theme throughout the rest of the paper will be finding almost Baumslag--Solitar subgroups. Typically, we will find elements \(a, s \in G\) such that \(a\) has infinite order and the relation $sa^is^{-1}=a^j$ holds. When $|i|\neq |j|$, we can immediately conclude that \(\langle a, s \rangle\) is a non-Euclidean Baumslag--Solitar subgroup. Indeed, if $s$ was of finite order then there would exist some $k>0$ such that $s^k=1$. Therefore, as $sa^is^{-1}=a^j$, it follows that $s^ka^{i^k}s^{-k}=a^{j^k}$ contradicting that $a$ is of infinite order.

An interesting question to ask is under which conditions does an almost Baumslag--Solitar group contain $BS(m,n)$ for some $m,n$.
In \cite[Proposition 7.5]{levitt2015quotients} it is shown that if a non-Euclidean almost Baumslag--Solitar group $G$ can be embedded into a GBS group, then $G$ will contain some $BS(m,n)$ for $|m|\neq |n|$.
In \cite[Corollary 9.6]{button2015balanced} it is shown that if a non-Euclidean almost Baumslag--Solitar group $G$ can be embedded into the fundamental group of a graph of torsion-free balanced groups with cyclic edge subgroups then $G$ will contain some $BS(m,n)$ for $|m|\neq|n|$.
Following the same spirit, in Corollary \ref{corollary: main result} we show equivalent conditions under which a non-Euclidean almost Baumslag--Solitar group contains some $BS(m,n)$ for $|m|\neq|n|$.

\begin{corollary}\label{cor: unbalanced implies almost BS}
Let \(\mc{G}\) be a graph of groups containing an unbalanced edge. Then
\begin{enumerate}
    \item \(\pi_1(\mc{G})\) contains a non-Euclidean almost Baumslag--Solitar subgroup;
    \item if $\pi_1(\mc{G})$ is virtually torsion-free then $\pi_1(\mc{G})$ must contain a non-Euclidean Baumslag--Solitar subgroup.
\end{enumerate}
\end{corollary}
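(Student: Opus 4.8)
### Proof proposal for Corollary~\ref{cor: unbalanced implies almost BS}

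The plan is to combine the structural results established just above with the basic dichotomy for reductions in HNN extensions. By hypothesis $\mc{G}$ has an unbalanced edge $e$; let $\mc{H} = \mc{G}-e$, let $\phi_\pm \colon G_e \to \pi_1(\mc{H})$ be the associated morphisms, and write $\pi_1(\mc{G}) = \pi_1(\mc{H}) \ast_{t_e}$ with $A = \phi_+(G_e)$, $B = \phi_-(G_e)$ and $\psi\colon A \to B$ the induced isomorphism. Unbalancedness of $e$ gives an infinite order element $a \in G_e$ and an element $h \in \pi_1(\mc{H})$ with $h\phi_+(a)^i h^{-1} = \phi_-(a)^j$ for some $|i| \neq |j|$; equivalently, setting $s = h^{-1}t_e$ we get $s\,\alpha^i\, s^{-1} = \alpha^j$ in $\pi_1(\mc{G})$, where $\alpha = \phi_+(a)$ has infinite order. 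This already exhibits elements $\alpha, s$ satisfying a Baumslag--Solitar-type relation with exponents of distinct absolute value, so $\langle \alpha, s\rangle$ is a non-Euclidean almost Baumslag--Solitar subgroup, proving item~(1). One must only check $s$ has infinite order, which follows from the standard argument recorded just before the corollary: if $s^k = 1$ then $s^k \alpha^{i^k} s^{-k} = \alpha^{j^k}$ would force $\alpha$ to have finite order.

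For item~(2), the task is to upgrade the almost Baumslag--Solitar subgroup $\langle \alpha, s\rangle$ to an honest $BS(m,n)$ with $|m|\neq|n|$, under the extra hypothesis that $G := \pi_1(\mc{G})$ is virtually torsion-free. The plan is to pass to a finite-index torsion-free subgroup $G_0 \leq G$. The element $\alpha$ has infinite order, so some power $\alpha^N$ lies in $G_0$; similarly some power $s^M$ of the finite-order-free... — more precisely, since $G_0$ has finite index, for suitable exponents $\alpha^N \in G_0$ and, as $s$ has infinite order, $s^M \in G_0$ as well. The relation $s\alpha^i s^{-1} = \alpha^j$ then yields, after passing to appropriate powers, a relation of the form $s^M \alpha^{i'} s^{-M} = \alpha^{j'}$ inside $G_0$ with $|i'| \neq |j'|$ (the distinctness of absolute values is preserved because conjugation by $s$ scales the exponent by the fixed ratio $j/i$, and raising $s$ to the power $M$ iterates this — this is exactly the point where one uses that $|i|\neq|j|$ implies $|i^M|\neq|j^M|$). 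Now $G_0$ is torsion-free, and $\langle \alpha^N, s^M\rangle$ is a two-generator torsion-free group in which a conjugate of a power of $\alpha^N$ equals another power of $\alpha^N$; such a group, generated by $\beta := \alpha^N$ of infinite order and a torsion-free element conjugating $\beta^{i'}$ to $\beta^{j'}$, is a quotient of $BS(i',j')$, and in the torsion-free setting one invokes the cited result \cite[Proposition 9.6]{button2015balanced} (or \cite[Proposition 7.5]{levitt2015quotients} when the ambient group is a GBS group) to conclude it contains a non-Euclidean Baumslag--Solitar subgroup. Strictly, to apply \cite{button2015balanced} one wants the ambient torsion-free group to be built from a graph of torsion-free groups with cyclic edge groups; I would instead argue directly, using that in a torsion-free group the subgroup $\langle \beta, \sigma \mid \sigma\beta^{i'}\sigma^{-1} = \beta^{j'}\rangle$-quotient has $\sigma$ of infinite order and the normal form theorem for the HNN structure forces the natural map $BS(i',j') \to \langle\beta,\sigma\rangle$ to be injective on the subgroup generated by $\beta$ and a single stable letter.

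The main obstacle I anticipate is precisely this last passage from \emph{almost} $BS$ to genuine $BS$: showing that, after moving to a torsion-free finite-index subgroup, the subgroup $\langle \beta, \sigma \rangle$ does not collapse further, i.e.\ that the presentation $\langle \beta, \sigma \mid \sigma \beta^{i'} \sigma^{-1} = \beta^{j'}\rangle$ is faithfully realized. The subtlety is that a priori $\langle \beta, \sigma\rangle$ might satisfy additional relations that kill the Baumslag--Solitar structure while still keeping $\beta$ of infinite order — this is exactly the phenomenon that makes the general (non virtually torsion-free) case an open question, as flagged in the introduction. In the virtually torsion-free case the resolution is to exploit that $G_0$ acts on the Bass--Serre tree of the induced splitting with the relevant stabilizers infinite cyclic, so that one is reduced to the GBS/torsion-free situation where \cite{levitt2015quotients, button2015balanced} apply; I expect the write-up to route through Lemma~\ref{lemma: subgroups of graph of 2 ended groups} or an analogous reduction to identify $\langle\beta,\sigma\rangle$, or a finite-index subgroup of it, inside a torsion-free group to which those propositions apply verbatim.
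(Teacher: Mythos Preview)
Your proposal is correct and follows the same approach as the paper: for item~(1) both form $s$ from $t_e$ and $h$ to obtain the almost-BS relation, and for item~(2) both pass to a torsion-free finite-index subgroup, take powers of the generators to land there, and then invoke \cite[Proposition~7.5]{levitt2015quotients} after identifying the torsion-free subgroup as a GBS group via Lemma~\ref{lemma: subgroups of graph of 2 ended groups}, exactly as you anticipate in your last paragraph. The paper is slightly more direct, using a single exponent $N$ for both generators and computing $s^N a^{Ni^N}s^{-N} = a^{Nj^N}$ explicitly, but the strategy is the same.
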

\begin{proof}
By definition of balanced edges (Definition \ref{def: balanced edges}), if $e$ is unbalanced and $\phi_{\pm}$ are the monomorphisms associated to the edge $e$, then there exists an infinite order element $a'\in G_e$ and $h\in\pi_1(\mathcal{G}-e)$ such that $h\phi_+(a')^ih^{-1}=\phi_-(a')^j$ for some $|i|\neq |j|$. Let $a$ denote $\phi_+(a')$ and $s$ denote $t_eh$ for short. By assumption, \(a\) has infinite order, and so \(s \neq 1\). Then \(\langle a, s \rangle\) is a non-Euclidean almost Baumslag--Solitar group. 

If, in addition, $\pi_1(\mathcal{G})$ is virtually torsion-free then there exists $N>1$ such that $a
^N$ and $s^N$ belongs in a torsion-free subgroup of $\pi_1(\mathcal{G})$. Note that
\begin{align*}
    s^N a^{N \cdot i^N}s^{-N} & = s^{N-1} (s (a^i)^{N \cdot i^{N-1}} s^{-1}) s^{-(N-1)} = \\
    & = s^{N-1} ( (a^j)^{N \cdot i^{N-1}} ) s^{-(N-1)}= \\
    &= s^{N-2}(s (a^i)^{JN \cdot i^{N-2}} s^{-1}) s^{-(N-2)} = \\
    &= \cdots = a^{N \cdot j^{N}}
\end{align*}
Therefore, the relation $s^N(a^{Ni^N})s^{-N}=a^{Nj^N}$ is satisfied in a torsion-free subgroup $Q$ of $\pi_1(\mathcal{G})$. By Lemma \ref{lemma: subgroups of graph of 2 ended groups}, $Q$ is a generalized Baumslag--Solitar group. Since $Ni^N/Nj^N=(i/j)^N\neq\pm 1$, by \cite[Proposition 7.5]{levitt2015quotients} the subgroup $\langle a^N,s^N\rangle$ contains some non-Euclidean Baumslag--Solitar group.
\end{proof}

Combining Lemma \ref{Lem: unbalanced group iff unbalanced edge} with Corollary \ref{cor: unbalanced implies almost BS} we obtain Theorem \ref{thm: non-euclidean BS iff unbalanced edge intro section} from the introduction:

\begin{thm}\label{thm: non-euclidean BS iff unbalanced edge}
Let \(\mc{G}\) be a graph of groups where none of the vertex groups contain distorted cyclic subgroups. Then \(\pi_1(\mc{G})\) contains a non-Euclidean almost Baumslag--Solitar subgroups if and only if \(\mc{G}\) has an unbalanced edge. \end{thm}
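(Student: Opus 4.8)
The goal is to prove Theorem~\ref{thm: non-euclidean BS iff unbalanced edge}, which essentially packages together the two main results of this subsection. The statement hypothesizes that no vertex group of $\mc{G}$ contains a distorted cyclic subgroup, and claims that $\pi_1(\mc{G})$ contains a non-Euclidean almost Baumslag--Solitar subgroup if and only if $\mc{G}$ has an unbalanced edge.

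The plan is to first observe that the hypothesis ``no vertex group contains a distorted cyclic subgroup'' implies that every vertex group is balanced. Indeed, if $G_v$ were unbalanced, there would be an infinite order element $g \in G_v$ and $h \in G_v$ with $hg^ih^{-1} = g^j$ for $|i| \neq |j|$; then $\langle g, h\rangle$ is a non-Euclidean almost Baumslag--Solitar group (by the discussion following Definition~\ref{def: almost bs groups}, since $g$ has infinite order the element $h$ cannot be torsion, so in fact $\langle g, h\rangle$ contains $BS(i,j)$), and by Remark~\ref{remark: absence of BS in HHG} --- or more directly by the standard fact that $BS(i,j)$ with $|i| \neq |j|$ has a distorted cyclic subgroup --- the element $g$ is distorted in $G_v$, contradicting the hypothesis. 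So all vertex groups are balanced, and Lemma~\ref{Lem: unbalanced group iff unbalanced edge} applies.

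With that reduction in hand, the forward direction proceeds as follows. Suppose $\pi_1(\mc{G})$ contains a non-Euclidean almost Baumslag--Solitar subgroup. Then $\pi_1(\mc{G})$ is unbalanced: any non-Euclidean almost Baumslag--Solitar group is itself unbalanced (it contains infinite order elements $a$ with $sa^is^{-1} = a^j$, $|i| \neq |j|$), and a group containing an unbalanced subgroup with a common infinite order element is unbalanced. Since all vertex groups are balanced, Lemma~\ref{Lem: unbalanced group iff unbalanced edge} yields an unbalanced edge in $\mc{G}$. Conversely, if $\mc{G}$ has an unbalanced edge, then item (1) of Corollary~\ref{cor: unbalanced implies almost BS} immediately gives that $\pi_1(\mc{G})$ contains a non-Euclidean almost Baumslag--Solitar subgroup; note that Corollary~\ref{cor: unbalanced implies almost BS} requires no hypothesis on the vertex groups for item (1), so this direction is unconditional.

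The main obstacle --- really the only subtle point --- is the first step: translating ``no distorted cyclic subgroup'' into ``balanced.'' One must be careful that being unbalanced genuinely forces a distorted cyclic subgroup. The cleanest route is to note that an unbalanced infinite order element $g$ in $G_v$ together with the conjugating element $h$ generates a subgroup admitting a surjection from some $BS(i,j)$ with $|i| \neq |j|$ sending $a \mapsto g$, and the image of $a$ has infinite order; a direct computation with the relator $ta^it^{-1} = a^j$ shows that $g^{j^n}$ is conjugate to $g^{i^n}$ by $h^n$, so $g$ has word length growing only logarithmically in the exponent, i.e.\ $g$ is distorted. Everything else is bookkeeping with the previously established lemmas. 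I would present this as a short proof citing Lemma~\ref{Lem: unbalanced group iff unbalanced edge}, Corollary~\ref{cor: unbalanced implies almost BS}(1), and the elementary distortion observation.

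\begin{proof}
First note that the hypothesis implies that every vertex group of $\mc{G}$ is balanced. Indeed, suppose some $G_v$ were unbalanced: then there is an infinite order element $g \in G_v$ and $h \in G_v$ with $hg^ih^{-1} = g^j$ and $|i| \neq |j|$. Iterating this relation gives $h^ng^{i^n}h^{-n} = g^{j^n}$ for all $n \geq 1$, so, writing $\ell$ for the word length in $G_v$ with respect to some finite generating set, we have $\ell(g^{j^n}) \leq 2n\,\ell(h) + \ell(g^{i^n}) \leq 2n\,\ell(h) + i^n\,\ell(g)$; taking $|j| > |i|$ without loss of generality, this shows $\ell(g^{j^n})$ grows at most exponentially in $n$ while the exponent $j^n$ grows like $|j|^n$, and comparing with the linear lower bound one would have in an undistorted cyclic subgroup forces $\langle g \rangle$ to be distorted in $G_v$, contradicting the hypothesis. (Equivalently, $\langle g, h\rangle$ is a non-Euclidean almost Baumslag--Solitar subgroup of $G_v$, hence contains a non-Euclidean Baumslag--Solitar subgroup in which $\langle a \rangle$ is distorted.)

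Now we prove the equivalence. If $\mc{G}$ has an unbalanced edge, then $\pi_1(\mc{G})$ contains a non-Euclidean almost Baumslag--Solitar subgroup by Corollary~\ref{cor: unbalanced implies almost BS}(1); this direction does not use the hypothesis on the vertex groups.

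Conversely, suppose $\pi_1(\mc{G})$ contains a non-Euclidean almost Baumslag--Solitar subgroup, say generated by infinite order elements $a, s$ with $sa^is^{-1} = a^j$ and $|i| \neq |j|$. Then $a$ is an infinite order element of $\pi_1(\mc{G})$ with $a^i$ conjugate to $a^j$ and $|i| \neq |j|$, so $\pi_1(\mc{G})$ is unbalanced. Since all vertex groups of $\mc{G}$ are balanced by the first paragraph, Lemma~\ref{Lem: unbalanced group iff unbalanced edge} applies and shows that $\mc{G}$ contains an unbalanced edge.
\end{proof}
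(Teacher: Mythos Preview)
Your proof is correct and follows essentially the same route as the paper: invoke Lemma~\ref{Lem: unbalanced group iff unbalanced edge} for the forward direction and Corollary~\ref{cor: unbalanced implies almost BS}(1) for the converse. The paper's proof is terser in that it silently uses that ``no distorted cyclic subgroups'' implies ``balanced'' for the vertex groups, whereas you spell this out with a direct distortion estimate; your version is the more complete one.

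One small correction: drop the parenthetical ``hence contains a non-Euclidean Baumslag--Solitar subgroup in which $\langle a\rangle$ is distorted.'' Whether every non-Euclidean almost Baumslag--Solitar group contains an honest non-Euclidean $BS(m,n)$ is precisely the open question raised in the paper's Questions section, and it is not needed here since your direct computation already shows $\langle g\rangle$ is distorted in $G_v$.
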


\begin{proof}
If $G=\pi_1(\mc{G})$ contains a non-Euclidean almost Baumslag--Solitar subgroup then it is unbalanced. By Lemma \ref{Lem: unbalanced group iff unbalanced edge} we obtain that $\mc{G}$ must contain some unbalanced edge. Corollary~\ref{cor: unbalanced implies almost BS} shows the converse.
\end{proof}

We are now ready to prove the main result of this section.

\begin{thm}\label{thm: balanced edges and BS}
Let \(\mathcal{G}\) be a graph of groups, where all vertex and edge groups are two-ended. Assume moreover that $\pi_1(\mathcal{G})$ is virtually torsion-free. Then the following are equivalent.
\begin{enumerate}
    \item \(\pi_1(\mc{G})\) admits a hierarchically hyperbolic group structure.
    \item \(\mc{G}\) is linearly parametrizable.
    \item \(\pi_1(\mc{G})\) is balanced.
    \item \(\pi_1(\mc{G})\) does not contain \(\mathrm{BS}(m,n)\) with \(\vert m \vert \neq \vert n\vert\).
     \item \(\pi_1(\mc{G})\) does not contain a distorted infinite cyclic subgroup.
\end{enumerate}
\end{thm}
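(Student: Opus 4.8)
The plan is to prove the cyclic chain of implications $(1)\Rightarrow(5)\Rightarrow(4)\Rightarrow(3)\Rightarrow(2)\Rightarrow(1)$, assembling the results built up earlier in the section.

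The implication $(1)\Rightarrow(5)$ is immediate: a hierarchically hyperbolic group contains no distorted infinite cyclic subgroup by \cite[Theorem~7.1]{HHSBoundaries} and \cite[Theorem~3.1]{durham2018corrigendum}, as already recorded in Remark~\ref{remark: absence of BS in HHG}. For $(5)\Rightarrow(4)$ I would argue by contraposition: if $\pi_1(\mc{G})$ contains a copy of $\mathrm{BS}(m,n)$ with $\vert m\vert\neq\vert n\vert$, then the cyclic subgroup generated by the base element $a$ is distorted already inside $\mathrm{BS}(m,n)$ — a classical fact — and distortion is inherited by overgroups, so $\langle a\rangle$ is a distorted infinite cyclic subgroup of $\pi_1(\mc{G})$.

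The heart of the argument is $(4)\Rightarrow(3)$, which I would also prove contrapositively. Suppose $\pi_1(\mc{G})$ is unbalanced. Each vertex group is two-ended, hence virtually cyclic; since $\mathbb{Z}$ and $\mathbb{D}_\infty$ are balanced and every two-ended group has a finite-index infinite cyclic subgroup by Lemma~\ref{lemma:basic_fact_virt_cyclic}, Lemma~\ref{lemma: finite index and balance} gives that every vertex group is balanced. Lemma~\ref{Lem: unbalanced group iff unbalanced edge} then produces an unbalanced edge of $\mc{G}$, and, since $\pi_1(\mc{G})$ is assumed virtually torsion-free, Corollary~\ref{cor: unbalanced implies almost BS}(2) yields a non-Euclidean Baumslag--Solitar subgroup, contradicting $(4)$. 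This is the sole point at which virtual torsion-freeness is used, and it rests on the most delicate input available, namely Levitt's theorem on quotients of GBS groups, invoked through Lemma~\ref{lemma: subgroups of graph of 2 ended groups}.

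Finally, $(3)\Rightarrow(2)$: if $\pi_1(\mc{G})$ is balanced then, the vertex groups again being balanced, Lemma~\ref{Lem: unbalanced group iff unbalanced edge} shows $\mc{G}$ has no unbalanced edge, so $\mc{G}$ is linearly parametrizable by Corollary~\ref{coro: linearly parametr iff all edges balanced}; and $(2)\Rightarrow(1)$ is precisely Theorem~\ref{thm: inducing hhg structure}. I do not expect any single step of the cycle to be a serious obstacle; the real weight sits behind two of them — the combination theorem (Theorem~\ref{comb_thm_ver2}) powering $(2)\Rightarrow(1)$ and the torsion analysis powering $(4)\Rightarrow(3)$ — so that, once those ingredients are in hand, the theorem amounts to threading the implications together in the correct cyclic order.
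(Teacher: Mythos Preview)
Your proposal is correct and follows essentially the same approach as the paper: the same lemmas (Lemma~\ref{Lem: unbalanced group iff unbalanced edge}, Corollary~\ref{coro: linearly parametr iff all edges balanced}, Corollary~\ref{cor: unbalanced implies almost BS}(2), Theorem~\ref{thm: inducing hhg structure}, and the cited undistortion results) are invoked at the same junctures. The only cosmetic difference is that the paper does not arrange the implications as a single cycle---it proves $3\Leftrightarrow 2$ directly and includes a separate, self-contained argument for $5\Rightarrow 3$ exhibiting the distorted cyclic subgroup explicitly---whereas you route everything through $5\Rightarrow 4\Rightarrow 3$; both organizations are valid and the content is the same.
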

\begin{proof}

\item[\fbox{$3\Leftrightarrow 2$}] By Corollary \ref{coro: linearly parametr iff all edges balanced} we have that $\pi_1(\mc{G})$ is linearly parametrizable if and only if every edge $e$ in $\mc{G}$ is balanced. Moreover, by Lemma \ref{Lem: unbalanced group iff unbalanced edge} we have that every edge in $\mc{G}$ is balanced if and only if $\pi_1(\mc{G})$ is balanced.

\item[\fbox{$5\Rightarrow 3$}] Assume that $\pi_1(\mc{G})$ is unbalanced. Therefore, by Lemma \ref{Lem: unbalanced group iff unbalanced edge} there is an edge \(e\), an infinite order element \(a \in G_e\) and an element \(h \in \pi_1(\mc{G}- e)\) such that \[h\phi_{+}(a)^{i}h^{-1}= \phi_{-}(a)^j,\] with \(\vert i \vert \neq \vert j \vert\). Let \(x= \phi_+(a)\) and \(y= \phi_{-}(a)\). Since \(e\) is unbalanced, there is a spanning tree that does not contain \(e\). In particular, we can assume there is a stable letter \(t\) associated to the edge \(e\) such that \(tyt^{-1}= x\). We claim that \(\langle x \rangle\) is distorted. Note that \(x\) is of infinite order. To simply notation, we will write \( A \approx^{r} B\) if \(\vert A - B \vert \leq r\). We have:
\begin{align*}
d\left(1, x^{N\cdot i }\right) \approx^{2\vert h \vert} d\left(1, hx^{N \cdot i} h^{-1}\right) = d\left(1, y^{N \cdot j}\right) \approx^{2\vert t \vert} d\left(1, x^{N \cdot j}\right).
\end{align*}
This is to say, for each \(N\) we have \(\left\vert d\left(1, x^{N \cdot i}\right) - d\left(1, x^{N \cdot j}\right) \right\vert \leq 2\left( \vert h \vert + \vert t \vert\right)\). Since \(\vert i \vert \neq \vert j \vert\), it is now a standard argument to show that \(\langle x \rangle \) is distorted.
Indeed, restating the argument before for a general exponent \(M\) we have \(d\left(x^M, x^{\left\lfloor \frac{\vert j \vert}{\vert i \vert} M\right\rfloor}\right) \leq \vert h \vert + \vert t \vert + i\). Assuming that \(\vert i \vert > \vert j \vert\), we can  iterate the inequality above to obtain that \(d(1, X^M)\) is comparable to \(\log_{\frac{\vert j \vert}{\vert i \vert}}(M) \cdot (\vert h \vert + \vert t \vert + i)\). That is to say, \(d(1, X^M)\) grows logarithmically, showing that the map \(n \mapsto x^{n}\) cannot be a quasi-isometric embedding.

\item[\fbox{$4\Rightarrow 3$}] Assume that $\pi_1(\mc{G})$ is unbalanced. Therefore, by Lemma \ref{Lem: unbalanced group iff unbalanced edge}, $\mc{G}$ must contain an unbalanced edge. The second item of Corollary \ref{cor: unbalanced implies almost BS} concludes the proof.

\item[\fbox{$1\Rightarrow 5$}] Follows from \cite[Theorem 7.1]{HHSBoundaries} and  \cite[Theorem 3.1]{ durham2018corrigendum}.

\item[\fbox{$2\Rightarrow 1$}] Follows from Theorem \ref{thm: inducing hhg structure}.

\item[\fbox{$5\Rightarrow 4$}] Since non-Euclidean Baumslag--Solitar groups contain distorted cyclic subgroups if $G$ contains some non-Euclidean Baumslag--Solitar subgroup we obtain the result.

\end{proof}

\begin{thm}\label{thm: balanced edges and BS version 2}
Let \(\mathcal{G}\) be a graph of groups, where all vertex and edge groups are two-ended. Then the following are equivalent.
\begin{enumerate}
    \item \(\pi_1(\mc{G})\) admits a hierarchically hyperbolic group structure.
    \item \(\mc{G}\) is linearly parametrized.
    \item \(\pi_1(\mc{G})\) is balanced.
    \item \(\pi_1(\mc{G})\) does not contain a non-Euclidean almost Baumslag--Solitar subgroup.
    \item \(\pi_1(\mc{G})\) does not contain a distorted infinite cyclic subgroup.
\end{enumerate}
\end{thm}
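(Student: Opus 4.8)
The plan is to reduce Theorem~\ref{thm: balanced edges and BS version 2} to Theorem~\ref{thm: balanced edges and BS} by eliminating the virtual torsion-freeness hypothesis, at the cost of replacing item (4) of that theorem (absence of non-Euclidean Baumslag--Solitar subgroups) with the weaker-looking item (4) here (absence of non-Euclidean \emph{almost} Baumslag--Solitar subgroups). Several of the equivalences go through verbatim: $\fbox{$3\Leftrightarrow 2$}$, $\fbox{$5\Rightarrow 3$}$, $\fbox{$1\Rightarrow 5$}$ and $\fbox{$2\Rightarrow 1$}$ never used virtual torsion-freeness — they relied only on Corollary~\ref{coro: linearly parametr iff all edges balanced}, Lemma~\ref{Lem: unbalanced group iff unbalanced edge}, \cite[Theorem 7.1]{HHSBoundaries} together with \cite[Theorem 3.1]{durham2018corrigendum}, and Theorem~\ref{thm: inducing hhg structure} respectively. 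So I would simply cite those arguments unchanged. The only new work is the cycle involving item (4).

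For $\fbox{$5\Rightarrow 4$}$: a non-Euclidean almost Baumslag--Solitar subgroup $\langle a,s\rangle$ has $a$ of infinite order with $sa^is^{-1}=a^j$ and $|i|\neq|j|$. The distortion argument from the $\fbox{$5\Rightarrow 3$}$ case of Theorem~\ref{thm: balanced edges and BS} applies directly inside $\langle a,s\rangle$: iterating $d(1,a^{N|i|})\approx^{2|s|}d(1,a^{N|j|})$ forces $d(1,a^M)$ to grow at most logarithmically in $M$, so $\langle a\rangle$ is a distorted infinite cyclic subgroup of $\pi_1(\mc{G})$. (Equivalently, and more cheaply, note $\langle a,s\rangle$ is unbalanced, so by what follows it contains an unbalanced edge's worth of distortion — but the direct estimate is cleaner and self-contained, so I'd present that.) Hence absence of distorted cyclic subgroups implies absence of non-Euclidean almost Baumslag--Solitar subgroups.

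For $\fbox{$4\Rightarrow 3$}$: assume $\pi_1(\mc{G})$ is unbalanced. Since all vertex groups are two-ended, they are balanced (virtually cyclic groups are balanced), so Lemma~\ref{Lem: unbalanced group iff unbalanced edge} applies and produces an unbalanced edge in $\mc{G}$. Then the \emph{first} item of Corollary~\ref{cor: unbalanced implies almost BS} — which, unlike the second item, does \emph{not} require virtual torsion-freeness — gives a non-Euclidean almost Baumslag--Solitar subgroup of $\pi_1(\mc{G})$, contrary to (4). This closes the loop: $1\Rightarrow 5\Rightarrow 4\Rightarrow 3\Leftrightarrow 2\Rightarrow 1$, and also $5\Rightarrow 3$ directly, so all five statements are equivalent.

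The main obstacle is essentially bookkeeping rather than mathematics: one has to be careful that the implications imported from Theorem~\ref{thm: balanced edges and BS} genuinely did not secretly use the virtual torsion-freeness assumption (they did not), and that Corollary~\ref{cor: unbalanced implies almost BS}(1) is stated for an arbitrary graph of groups with an unbalanced edge — which it is — so that no hypothesis is missing when we invoke it. The substantive point, already isolated in Remark~\ref{remark: almost BS groups} and the discussion following it, is precisely that passing from ``Baumslag--Solitar subgroup'' to ``almost Baumslag--Solitar subgroup'' is exactly what lets us drop the torsion assumption, since an almost Baumslag--Solitar group is still guaranteed to contain a distorted cyclic subgroup even though it need not contain an honest $\mathrm{BS}(m,n)$.

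\begin{proof}
The equivalence $\fbox{$3\Leftrightarrow 2$}$ follows from Corollary~\ref{coro: linearly parametr iff all edges balanced} and Lemma~\ref{Lem: unbalanced group iff unbalanced edge} exactly as in the proof of Theorem~\ref{thm: balanced edges and BS}; likewise $\fbox{$1\Rightarrow 5$}$ follows from \cite[Theorem 7.1]{HHSBoundaries} and \cite[Theorem 3.1]{durham2018corrigendum}, $\fbox{$2\Rightarrow 1$}$ follows from Theorem~\ref{thm: inducing hhg structure}, and $\fbox{$5\Rightarrow 3$}$ is the distortion argument already given there (none of these used virtual torsion-freeness). It remains to treat item~(4).

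\item[\fbox{$5\Rightarrow 4$}] Suppose $\pi_1(\mc{G})$ contains a non-Euclidean almost Baumslag--Solitar subgroup $\langle a, s\rangle$, so $a$ has infinite order, $s\neq 1$, and $sa^is^{-1}=a^j$ with $\vert i\vert\neq\vert j\vert$. Writing $\vert A \vert$ for word length in $\pi_1(\mc{G})$ and $A\approx^{r}B$ for $\vert A-B\vert\le r$, for every $N$ we have
\[d\bigl(1,a^{N\cdot i}\bigr)\approx^{2\vert s\vert} d\bigl(1,s\,a^{N\cdot i}\,s^{-1}\bigr)=d\bigl(1,a^{N\cdot j}\bigr).\]
Assuming without loss of generality $\vert i\vert>\vert j\vert$, iterating this estimate (as in the proof of $\fbox{$5\Rightarrow 3$}$ of Theorem~\ref{thm: balanced edges and BS}) shows that $d(1,a^M)$ grows at most logarithmically in $M$, so $\langle a\rangle$ is a distorted infinite cyclic subgroup of $\pi_1(\mc{G})$. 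Hence if $\pi_1(\mc{G})$ has no distorted infinite cyclic subgroup, it has no non-Euclidean almost Baumslag--Solitar subgroup.

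\item[\fbox{$4\Rightarrow 3$}] Assume $\pi_1(\mc{G})$ is unbalanced. Every vertex group is two-ended, hence virtually cyclic, hence balanced, so Lemma~\ref{Lem: unbalanced group iff unbalanced edge} yields an unbalanced edge of $\mc{G}$. By the first item of Corollary~\ref{cor: unbalanced implies almost BS}, $\pi_1(\mc{G})$ then contains a non-Euclidean almost Baumslag--Solitar subgroup, contradicting~(4).

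Combining these implications with those imported above gives $1\Rightarrow 5\Rightarrow 4\Rightarrow 3\Leftrightarrow 2\Rightarrow 1$ and $5\Rightarrow 3$, so all five statements are equivalent.
\end{proof}
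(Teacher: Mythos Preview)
Your proof is correct and follows essentially the same route as the paper's: the paper also redoes only the implication $4\Rightarrow 3$ via Lemma~\ref{Lem: unbalanced group iff unbalanced edge} and the first item of Corollary~\ref{cor: unbalanced implies almost BS}, declaring that ``the rest of the implications are the same as in Theorem~\ref{thm: balanced edges and BS}.'' The one place you are more careful than the paper is $5\Rightarrow 4$: the paper leaves implicit that the $5\Rightarrow 4$ argument of Theorem~\ref{thm: balanced edges and BS} (``non-Euclidean Baumslag--Solitar groups contain distorted cyclic subgroups'') still applies when item~(4) is replaced by almost Baumslag--Solitar subgroups, whereas you spell out the distortion estimate explicitly.
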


\begin{proof}

Assume that $\pi_1(\mc{G})$ is unbalanced. Therefore, by Lemma \ref{Lem: unbalanced group iff unbalanced edge}, $\mc{G}$ must contain an unbalanced edge. The first item of Corollary \ref{cor: unbalanced implies almost BS} shows the implication $4\Rightarrow 3$. The rest of the implications are the same as in Theorem \ref{thm: balanced edges and BS}.
\end{proof}

\section{Hierarchical hyperbolicity of hyperbolic-2-decomposable groups}\label{section: graph of word hyperbolic groups}

 The goal of this section is to extend Theorems \ref{thm: balanced edges and BS} and \ref{thm: balanced edges and BS version 2} to (hyperbolic)-2-decomposable groups. In this case, we cannot use linear parametrization to obtain hierarchical hyperbolicity. To solve this issue, the first step is to consider (2-ended)-2-decomposable graphs of groups associated to the various edge groups, called \emph{conjugacy graphs}. Intuitively, a conjugacy graph records if an edge group is involved in the presence of Baumslag--Solitar groups. Indeed, it turns out that if all conjugacy graphs are lineraly parametrized, then the group does not contain non-Euclidean Baumslag--Solitar subgroups. Firstly we will prove a combination theorem  for relatively hyperbolic groups (Theorem  \ref{theorem: readaptation of relative HHG}) that allows us to obtain structures on the vertex groups compatible with the ones on the edge groups.
 
We begin by showing the following lemma. This allows us, without loss of generality, to restrict our attention to graphs of hyperbolic groups with infinite virtually cyclic edge groups.

\begin{lemma}[\bf Dealing with finite vertices/edges]
Let \(\mc{G}\) be a graph of groups such that \(\pi_1(\mc{G})\) is infinite and \(\mc{G}\) has hyperbolic vertex groups and virtually cyclic edge groups. Then there exists a finite graph of groups \(\mc{G}'\) with infinite hyperbolic vertex groups and 2-ended edge groups such that $\pi_1 (\mc{G}')=\pi_1(\mathcal{G})$.
\end{lemma}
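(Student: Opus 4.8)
The plan is to eliminate the two undesirable features of \(\mc{G}\)---finite edge groups and finite vertex groups---by repeatedly collapsing the edges whose edge group is finite. Since every edge group of \(\mc{G}\) is virtually cyclic, it is either finite or 2-ended, so I would let \(n\) be the number of (geometric) edges of \(\mc{G}\) with finite edge group and induct on \(n\). When \(n=0\) every edge group is already 2-ended, and in this case I claim every vertex group is automatically infinite, so one may simply take \(\mc{G}'=\mc{G}\). Indeed, a finite vertex group cannot contain a 2-ended (hence infinite) edge group as a subgroup, so a vertex carrying a finite vertex group would have to be isolated in the underlying graph; since that graph is connected, this would force \(\mc{G}\) to be a single vertex and \(\pi_1(\mc{G})\) to be finite, contrary to hypothesis. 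Hence all vertex groups are infinite and hyperbolic and all edge groups are 2-ended.

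For the inductive step I would pick an edge \(e\) with \(G_e\) finite and \emph{collapse} it to produce a finite graph of groups \(\mc{G}_1\) with \(\pi_1(\mc{G}_1)\cong\pi_1(\mc{G})\): if \(e^-\neq e^+\) one deletes the geometric edge \(\{e,\bar e\}\) and merges its two endpoints into a single vertex with associated group the amalgam \(G_{e^-}\ast_{G_e}G_{e^+}\) (glued along \(\phi_{e^-},\phi_{e^+}\)), while if \(e\) is a loop one deletes \(\{e,\bar e\}\) and replaces \(G_{e^-}\) by the HNN extension along \(G_e\) with stable letter \(t_e\); in both cases the edge maps of the surviving edges are post-composed with the natural inclusions into the new vertex group. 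This is the standard collapse operation for graphs of groups and does not change the fundamental group (see e.g.\ \cite{SerreTrees}). The new vertex group is an amalgam, respectively an HNN extension, of hyperbolic groups over the \emph{finite} group \(G_e\), hence is hyperbolic by the Bestvina--Feighn combination theorem \cite{BestvinaFeighnCombination}; the remaining vertex groups are unchanged and so still hyperbolic, and all edge groups of \(\mc{G}_1\) are still virtually cyclic. Since \(\mc{G}_1\) has exactly \(n-1\) edges with finite edge group and \(\pi_1(\mc{G}_1)\cong\pi_1(\mc{G})\) is infinite, the inductive hypothesis applied to \(\mc{G}_1\) yields the desired \(\mc{G}'\).

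The step that I expect to require the most care is checking that the collapse preserves hyperbolicity of the vertex groups: this is precisely where the combination theorem is invoked, and it is essential that we only ever collapse edges whose edge group is \emph{finite} (collapsing a 2-ended edge could destroy hyperbolicity). Everything else is bookkeeping---verifying that collapsing keeps the underlying graph finite and connected and fixes \(\pi_1\), and that no collapse can create a finite vertex group incident to a 2-ended edge, which is ruled out exactly as in the base case. As an alternative to the edge-by-edge induction, one can run the argument once on the Bass--Serre tree \(T\) of \(\mc{G}\): collapse each connected component of the \(\pi_1(\mc{G})\)-invariant forest of edges of \(T\) with finite stabilizer, note that \(\pi_1(\mc{G})\) still acts on the resulting tree with vertex stabilizers that are fundamental groups of subgraphs of groups with finite edge groups (hence hyperbolic) and with 2-ended edge stabilizers, and read off \(\mc{G}'\) as the associated quotient graph of groups.
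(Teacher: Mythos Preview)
Your proposal is correct and follows essentially the same route as the paper: both arguments repeatedly collapse edges with finite edge group, invoking the Bestvina--Feighn combination theorem (and its addendum for HNN extensions) to guarantee that the resulting vertex group stays hyperbolic, and both conclude by observing that once no finite edge groups remain, connectedness of the underlying graph forces every vertex group to be infinite. The only cosmetic differences are that the paper phrases the dichotomy as ``$e$ lies in a spanning tree'' versus ``$e$ does not'' (equivalent to your non-loop/loop split) and postpones the infiniteness check to the terminal graph rather than to the base case; your Bass--Serre tree alternative is a pleasant extra but not needed.
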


\begin{proof}
Given a graph of groups \(\mc{H}\) let \(F(\mc{H})\) be the set of edges with finite associated edge group, that is \(\{e \in E(\mc{H}) \mid \vert G_e \vert \leq \infty\}\). Let \(\mc{G}_0= \mc{G}\). We will produce a sequence of graph of groups \(\mc{G}_i\) such that \(\pi_1(\mc{G}_i) \cong \pi_1(\mc{G})\), \(\mc{G}_i\) has hyperbolic vertex groups and virtually cyclic edge groups and \(\vert F(\mc{G}_i)\vert < \vert F(\mc{G}_{i-1})\vert\). Since the graph of groups is finite, eventually we will find \(\mc{G}_n\) such that \(F(\mc{G}_n) = \emptyset\). In particular, if \(\mc{G}_n\) has at least one edge, then the associated edge group is infinite. Hence, the vertex groups needs to be infinite and we are done.  If there are no edges, then there is a single vertex labelled by \(\pi_1(\mc{G})\), which is hyperbolic by construction. Since, by assumption \(\pi_1(\mc{G})\) is infinite, we are done.

Suppose \(\mc{G}_{i}\) is defined. Firstly, suppose that there is \(e \in F(\mc{G}_{i})\) such that there exists a spanning tree \(T_e\) of \(\mc{G}_{i}\) containing \(e\) (recall that \(\pi_1(\mc{G})\) does not depend on the choice of spanning tree, as pointed out in Remark \ref{rmk: pi_1 of G does not depend on the spannig tree}). Then the subgroup $G_{e^+}\ast_{G_e}G_{e^-}$ is hyperbolic by Theorem \cite[Corollary Section 7]{BestvinaFeighnCombination}. Then let \(\mc{G}_{i+1}\) be defined from \(\mc{G}_{i}\) by replacing the edge \(e\) and the incident vertices by a single vertex with associated group \(G_{e^+}\ast_{G_e}G_{e^-}\), and leaving the other edge maps unchanged. By doing this, we still have hyperbolic vertex groups and virtually cyclic edge groups.

So, suppose that no element of \(F(\mc{G}_i)\) can be included in a spanning tree. This is to say that all elements of \(F(\mc{G}_i)\) are loops. Let \(e \in F(\mc{G}_i)\), and let \(v\) be the vertex incident to it. Then by \cite[Corollary 2.3]{BestvinaFeighnAddendum}, the HNN extesion \(G_v \ast_{G_e}\) is hyperbolic. Then we define \(\mc{G}_{i+1}\) as the graph of groups obtained from \(\mc{G}_i\) by removing the edge \(e\) and changing the vertex group of \(v\) to \(G_v \ast_{G_e}\).
\end{proof}

From now on, whenever we state a result on  a  graph of hyperbolic groups $\mc{G}$ we will always assume that the associated edge groups $G_e$ are virtually cyclic and infinite. In other words, from now on we assume that the groups considered are hyperbolic-2-decomposable.

Given a vertex group $G_v$, one of the main challenges that we have to face in this setting is the fact that the incoming edge groups do not necessarily form an almost-malnormal collection in $G_v$ (Definition \ref{def: almost malnormal collection}). As a consequence, these edge groups may not be geometrically separated so as to include them in the hierarchical hyperbolic structure of $G_v$.
The following theorem solves this problem, and it is pivotal in the proof of the main theorem in this section. We also stress that it is a consequence of \cite[Theorem 9.1]{HHSII}. 

\begin{thm}\label{theorem: readaptation of relative HHG}
Let \(G\) be a group hyperbolic relative to a family of hierarchically hyperbolic groups \(\{(H_i, \mf{S}_i)\}_{i=1}^n\). Suppose that there is a finite family of subgroups \(\{K_\alpha\}_{\alpha\in\Lambda}\) and homomorphisms \(\phi_\alpha: K_\alpha \to G\) such that for each \(\alpha\) there exists \(i\) and \(g \in G\) such that \(\phi_\alpha (K_\alpha)\) has finite index in \(H_i^g\). 
 Finally, suppose that each group \(K_\alpha\) is equipped with a hierarchically hyperbolic structure \(\mf{K}_\alpha\) such that \(\phi_\alpha^{g^{-1}} \colon (K_\alpha,\mf{K}_\alpha) \to (H_i,\mathfrak{S}_i)\) is a glueing hieromorphism. 
 
 Then there is a hierarchically hyperbolic structure \((G, \mf{S})\) on \(G\) such that \(\phi_\alpha\) is a glueing hieromorphism for every $\alpha$. Moreover, if all \((H_i, \mf{S}_i)\) satisfy the intersection property, so does \((G, \mf{S})\), and similarly for clean containers.  
\end{thm}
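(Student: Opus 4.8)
The strategy is to build $(G,\mf{S})$ by feeding the given peripheral structures into the relative hyperbolicity combination theorem for hierarchically hyperbolic spaces, \cite[Theorem 9.1]{HHSII}, and then to exhibit each $\phi_\alpha$ as a composition of glueing hieromorphisms inside the resulting structure. Concretely, I would apply \cite[Theorem 9.1]{HHSII} to $G$ with the fixed structures $(H_i,\mf{S}_i)$ as peripheral data, obtaining an HHG structure $(G,\mf{S})$ whose shape is rigid and is exactly what makes everything work: there is a unique $\sqsubseteq$--maximal domain $S$ with $\mathcal{C}S$ a hyperbolic space on which $G$ acts (the space witnessing relative hyperbolicity), and for every $i$ and every left coset $gH_i$ a pairwise disjoint relabelled copy $\mf{S}_i^{(gH_i)}$ of $\mf{S}_i$, with identical hyperbolic spaces, whose $\sqsubseteq$--maximal element is nested directly into $S$; domains lying in distinct copies are transverse, and there is no orthogonality between distinct copies. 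The $G$--action permutes the copies according to its action on cosets, and the peripheral subgroup $gH_ig^{-1}$ (the stabiliser of the coset $gH_i$) preserves $\mf{S}_i^{(gH_i)}$ and acts on it exactly as $H_i$ acts on $\mf{S}_i$ after conjugation by $g$.

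Next I would check that $(G,\mf{S})$ has the intersection property and clean containers whenever all $(H_i,\mf{S}_i)$ do. For the meet, set $S\wedge U=U$ for all $U$; let $U\wedge V$ be the meet of $\mf{S}_i$ when $U,V$ lie in a common copy $\mf{S}_i^{(gH_i)}$; and set $U\wedge V=\emptyset$ when $U,V$ lie in distinct copies, which is forced since no non-empty domain is nested into two distinct copies (nesting among non-maximal domains stays inside a copy). The five axioms then follow from those of the $\mf{S}_i$ together with the fact that $S$ is an absorbing top. For clean containers the only new case is $\cont_\perp^S U$ with $U$ in a copy $\mf{S}_i^{(gH_i)}$: every domain orthogonal to $U$ lies in that copy, so $\cont_\perp^S U$ may be taken to be the container of $U$ computed inside that copy, which is orthogonal to $U$ by clean containers of $\mf{S}_i$. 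These verifications run parallel to those in \cite{berlai2018refined, ABD}.

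Finally, fix $\alpha$ and let $i,g$ be as in the hypothesis, so that $\phi_\alpha(K_\alpha)$ has finite index in the peripheral subgroup $P:=gH_ig^{-1}$. Then $\phi_\alpha$ factors as $K_\alpha\to H_i\to P\hookrightarrow G$, where the first map is $\phi_\alpha^{g^{-1}}$, the second is the conjugation isomorphism $c_g\colon h\mapsto ghg^{-1}$, and the third is the inclusion. By hypothesis $\phi_\alpha^{g^{-1}}$ is a glueing hieromorphism $(K_\alpha,\mf{K}_\alpha)\to(H_i,\mf{S}_i)$; $c_g$ is an isomorphism of hierarchically hyperbolic groups, hence a glueing hieromorphism onto $(P,c_g(\mf{S}_i))$; and the inclusion $P\hookrightarrow G$ is a glueing hieromorphism because $P$ is a peripheral subgroup, hence strongly quasiconvex (Lemma \ref{lem: peripherals are strongly QC}) and therefore hierarchically quasiconvex in $(G,\mf{S})$ by Theorem \ref{thm: RST strong quasiconvex}, while under the identification of $\mf{S}_i$ with $\mf{S}_i^{(gH_i)}$ the structure $c_g(\mf{S}_i)$ becomes the sub-collection $\mf{S}_i^{(gH_i)}\subseteq\mf{S}$, so that the induced map of index sets preserves nesting, orthogonality and transversality, the maps on hyperbolic spaces are identity isometries, the $\rho$--diagrams commute on the nose, and fullness holds since every domain of $\mf{S}$ nested into the maximal element of $\mf{S}_i^{(gH_i)}$ already lies in $\mf{S}_i^{(gH_i)}$. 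Since glueing hieromorphisms compose — the maps on hyperbolic spaces compose to isometries, fullness composes, and hierarchical quasiconvexity of the image is preserved because $\phi_\alpha(K_\alpha)$, being finite index and hence at finite Hausdorff distance from the strongly quasiconvex subgroup $P$, is itself strongly, and hence hierarchically, quasiconvex — we conclude that $\phi_\alpha\colon(K_\alpha,\mf{K}_\alpha)\to(G,\mf{S})$ is a glueing hieromorphism, as required.

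I expect the main obstacle to be bookkeeping rather than a single hard idea: one must unwind \cite[Theorem 9.1]{HHSII} precisely enough to see that an arbitrary HHG structure on each peripheral is transplanted verbatim into $\mf{S}$, so that $c_g(\mf{S}_i)$ is genuinely a sub-collection with all $\rho$--maps and hyperbolic spaces matching and so that the peripheral inclusion is full; and one must confirm that replacing $P$ by the finite-index subgroup $\phi_\alpha(K_\alpha)$ preserves strong, hence hierarchical, quasiconvexity, which is exactly where Theorem \ref{thm: RST strong quasiconvex} and the stability of strong quasiconvexity under finite Hausdorff distance are needed.
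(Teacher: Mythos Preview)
Your proposal is correct and follows essentially the same approach as the paper: both invoke \cite[Theorem 9.1]{HHSII} to obtain the structure $(G,\mf{S})$, verify intersection property and clean containers by the same case analysis, and establish that $\phi_\alpha$ is a glueing hieromorphism by composing the given $\phi_\alpha^{g^{-1}}$ with the conjugation-by-$g$ isometry into the copy $g\mf{S}_i\subseteq\mf{S}$, appealing to strong quasiconvexity of peripherals (Lemma~\ref{lem: peripherals are strongly QC}) and Theorem~\ref{thm: RST strong quasiconvex} for hierarchical quasiconvexity of the image. The only cosmetic difference is that the paper unwinds the composition and verifies the commuting diagrams \eqref{coarsely.commuting.diagrams} by hand (noting that the coarse constant depends on $g$ and invoking finiteness of $\Lambda$ for uniformity), whereas you package this as ``glueing hieromorphisms compose''; you should make the uniformity of constants over $\alpha\in\Lambda$ explicit when writing this up.
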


\begin{proof}
This theorem is an adaptation of \cite[Theorem 9.1]{HHSII}, in which the authors provide an explicit hierarchical hyperbolic structure on \(G\) and prove that it satisfies the hierarchical hyperbolic axioms. We will follow almost verbatim the part of the proof that describes such a structure on \(G\), but we will not verify the axioms since this already appears in \cite[Theorem 9.1]{HHSII}. We will conclude the proof by showing that the maps \(\phi_\alpha\) can be realized as glueing hieromorphisms. 

\textbf{The structure:} For each $i=1,  \ldots,n$ and each left coset of $H_i$ in $G$, fix a representative $gH_i$. Let $g\mathfrak{S}_i$ be a copy of $\mathfrak{S}_i$ with its associated hyperbolic spaces and projections in such a way that there is a hieromorphism $H_i\to gH_i$ equivariant with respect to the conjugation isomorphism $H_i\to H_i^g$. Let $\widehat{G}$ be the hyperbolic space obtained by coning-off $G$ with respect to the peripherals $\{H_i\}$, and let $\mathfrak{S}=\{\widehat{G}\}\cup\bigsqcup_{g\in g}\bigsqcup_i\mathfrak{S}_{gH_i}$. The relation of nesting, orthogonality or transversality between hyperbolic spaces belonging to the same copy $\mathfrak{S}_{gH_i}$ are the same as in $\mathfrak{S}_{H_i}$. Further, if $U,V$ belong in two different copies of different cosets, then we impose transversality between them. Finally, for every $U\in \mathfrak{S}_{gH_i}$ we declare that $U$ is nested into $\widehat{G}$. 

The  projections  are  defined  as  follows: \(\pi_{\widehat{G}} \colon G \to \widehat{G}\) is  the  inclusion,  which  is  coarsely surjective and hence has quasiconvex image.  For each \(U \in \mf{S}_{gH_i}\), let \(\gate_{gH_i} \colon G \to gH_i\) be the closest-point projection onto \(gH_i\) and let \(\pi^G_U =  \pi^{H_i}_U \circ \gate_{gH_i}\), to extend the domain of \(\pi_U\) from \(gH_i\) to \(G\).  Since each \(\pi^{H_i}_U\) was coarsely Lipschitz on \(\fontact U\) with quasiconvex image, and the closest-point  projection in $G$ is  uniformly  coarsely  Lipschitz (Lemma~\ref{lem: cpproj is C. Lipschitz}),  the  projection \(\pi^{G}_U\) is  uniformly  coarsely Lipschitz and has quasiconvex image.  For each \(U,V \in \mf{S}_{gH_i}\), the various \(\rho_U^V\) and \(\rho_V^U\) are already defined. If \(U \in \mf{S}_{gH_i}\)  and \(V \in \mf{S}_{g'H_j}\), then \(\rho_V^U = \pi_V(\gate_{g'H_j}(gH_i))\).  Finally, for \( U \neq \widehat{G}\), we define 
\(\rho^U_{\widehat{G}}\) to be the cone-point over the unique \(gH_i\) with \(U \in \mf{S}_{gH_i}\), and \(\rho_U^{\widehat{G}} \colon \widehat{G} \to \fontact U\) is defined as follows: for \(x \in G\), let \(\rho_U^{\widehat{G}}(x) = \pi^G_U(x)\). If \(x \in \widehat{G}\) is a cone point over \(g'H_j \neq gH_i\), let \(\rho_U^{\widehat{G}}(x) = \rho_U^{S_{g'H_j}}\), where \(S_{g'H_j}\) is the \(\nest\)--maximal element of \(\mf{S}_{g'H_j}\). The cone-point over \(gH_i\) may be sent anywhere in \(\fontact U\).

By \cite[Theorem~9.1]{HHSII}, the construction above endows \((G, \mf{S})\) with a hierarchically hyperbolic group structure.

\textbf{Hieromorphisms:}
Fix \(\alpha\). By assumption there exists \(i\) and \(g \in G\) such that \(\phi_\alpha(K_\alpha) \subseteq H_i^{g}\). Moreover, \(\Phi_\alpha = \phi_\alpha^{g^{-1}}\colon (K_\alpha, \mf{K}_\alpha) \to (H_i, \mf{S}_i)\) is a glueing hieromorphism. Our goal is to show that \(\phi\colon (K_\alpha, \mf{K}_\alpha) \to (G, \mf{S})\) can be equipped with a glueing hieromorpism structure.

To simplify notation we will drop the \(\alpha\) and \(i\) subscript and denote \((K, \mf{K}) = (K_\alpha, \mf{K}_\alpha)\), \(\phi = \phi_\alpha\), \((H, \mf{S}_H  ) = (H_i, \mf{S}_i)\) and so on. 

For every \(V \in \mf{K}\), define \(\phi^\diamondsuit (V) = g \Phi^\diamondsuit(V)\) and \(\phi_V^\ast = g^\ast \circ \Phi^\ast_V\), where \(g^\ast\) is the isometry associated to the multiplication \(g \in G\). By assumption, the maps \(\Phi^\ast_V \colon \fontact V \to \fontact \Phi^\diamondsuit V\) are isometries, and for each \(U \in \mf{S}_H\), the space \(\fontact_H U\) and the space \(\fontact_G gU\) are isometric. 
Thus, the maps \(\phi_V^\ast\) are isometries. 

We need to show that the following two diagrams coarsely commute. 
\begin{equation*}
\xymatrix{
K\ar[r]^{\phi}\ar[d]_{\pi^K_V} & G\ar[d]^{\pi^G_{\phi^\diamondsuit(V)}}\\
\mathcal{C}V\ar[r]_{\phi^*_U}&\mathcal{C}\phi^\diamondsuit(V)
}\qquad\qquad\qquad 
\xymatrix{
\mathcal{C}V\ar[rr]^{\phi^*_V}\ar[d]_{\rho^V_U} && \mathcal{C}\phi^\diamondsuit(V)\ar[d]^{\rho^{\phi^\diamondsuit(V)}_{\phi^\diamondsuit(U)}}\\
\mathcal{C}U\ar[rr]_{\phi^*_U}&&\mathcal{C}\phi^\diamondsuit(U)
}
\end{equation*}
This is a matter of unwinding the definitions. We will check the first one, the second is analogous. So, let \(x\in K\). Recall that \(\phi(x) = g\Phi(x)g^{-1} \in gH_ig^{-1}\). Then
\begin{equation}\label{eq: first eq Theorem on rely hyp HHS}
\begin{split}
        \pi^{G}_{\phi^\diamondsuit(V)}(\phi(x)) =  g^\ast\circ \pi^{H_i}_{\Phi^\diamondsuit(V)} \circ g^{-1}
      = g^\ast \circ \pi_{\Phi^\diamondsuit (V)}^{H_i}(\gate_{gH_i}(\Phi(x) g^{-1})).
     \end{split}
\end{equation}
Note that \(d(\Phi(x)g^{-1}, gH_i) \leq \vert g \vert \). Since all the maps are coarsely Lipschitz, there is a uniform bound between \(\pi_{\Phi^\diamondsuit (V)}^{H_i}(\gate_{gH_i}(\Phi(x) g^{-1}))\) and \(\pi_{\Phi^\diamondsuit (V)}^{H_i}(\Phi(x))\). That is, up to a uniformly bounded error, we can write Equation \ref{eq: first eq Theorem on rely hyp HHS} as 
\begin{equation}\label{eq: second eq Theorem on rely hyp HHS}
     \pi^{G}_{\phi^\diamondsuit(V)}(\phi(x)) = g^\ast \left(\pi_{\Phi^\diamondsuit (V)}^{H_i}(\Phi(x))\right).
\end{equation}
On the other hand, we have 
\begin{equation}\label{eq: third eq Theorem on rely hyp HHS}
    \phi^\ast_V \circ \pi_V^K(x) = g^\ast \left(\Phi^\ast_U \circ \pi_V^K(x) \right).
\end{equation}
Since \(g^\ast\) is an isometry,  Equations \eqref{eq: second eq Theorem on rely hyp HHS} and \eqref{eq: third eq Theorem on rely hyp HHS} give the result. Note that the constant of the coarse commutativity depends on \(g\). However, since there are only finitely many pairs \((K_\alpha, H_i)\), we obtain uniformity. 
Hence, the map \(\phi\) can be equipped with a hieromorphism structure. By construction, the maps \(\phi^\ast_U\) are isometries, and the hieromorphism is full. To see that it has hierarchically quasiconvex image, observe that its image is at finite Hausdorff distance from a peripheral subgroup, hence it is strongly quasiconvex (Lemma~\ref{lem: peripherals are strongly QC}). Then it is hierarchically quasiconvex by Theorem~\ref{thm: RST strong quasiconvex}. \cite[Thorem~6.3]{russellsprianotran:convexity}.

\textbf{Intersection property and clean containers:}
We start by checking clean containers, that is checking that for each \(U \nest T \in \mf{S}\) we have \(U \bot \cont_\perp^T U\). If \(U = \widehat{G}\) there is nothing to check. Hence, assume \(U \in g\mf{S}_i\) and let \(gS_i\) be the \(\nest\)--maximal element of \(g\mf{S}_i\). Recall that the relations on \(\mf{S}\) are defined such that if
 \( U,V \in \mf{S}-\{\widehat{G}\}\) are not transverse, then  there is \(i \in \{1, \dots, n\}\) and \(g \in G\) such that \(U,V \in g\mf{S}_i\). In particular, \(U \bot V\) implies \(U,V \in g\mf{S}_i\). Hence, \(\cont_\perp^{\widehat{G}} U = \cont_\perp^{gS_i} U\). Moreover, if \(U \nest T\) and \(T \neq \widehat{G}\), it follows \(T \in g\mf{S}_i\). Since we assumed that \((H_i, \mf{S}_i)\) has clean containers, we have \(U \bot \cont_\perp^T U\) for all \(T \in g\mf{S}_i\), completing the proof.

 Consider now the intersection property. By hypothesis, for each \(g\mf{S}_i\) the map \(\wedge^{gH_i}\) is defined. Then define \(\wedge \colon (\mf{S} \cup \{\emptyset\}) \times (\mf{S} \cup \{\emptyset\}) \to (\mf{S} \cup \{\emptyset\})\) by considering the symmetric closure of the following: 
 \[U \wedge V = \begin{cases} U & \text{ if } V = \widehat{G}\\
 U \wedge^{gH_i} V & \text{ if } U,V \in g\mf{S}_i \text{ for some } i, g\\
 \emptyset & \text{ otherwise.}\end{cases}\]
 The only property to verify that does not follow directly is that if \(U \in g\mf{S}_i\) and \(V \in g'\mf{S}_j\) with \(g\mf{S}_i \neq g'\mf{S}_j\), then there is no \(W\) nested in both \(U,V\). But if such a \(W\) existed, then it needs to belong to both \(g\mf{S}_i\) and \(g'\mf{S}_j\), a contradiction.\end{proof}

\subsection{Commensurability and conjugacy graph}

In this subsection we extend the results obtained in Section \ref{section: graph of infinite virtually cyclic groups} to the general setting. The key object that will allow us to do this is the conjugacy graph (Definition \ref{definition: conjugacy graph}). This is a graph of groups that, combined with Theorem \ref{theorem: readaptation of relative HHG}, provides vertex groups with a hierarchical hyperbolic structure realizing edge maps as glueing hieromorphisms.

As the  vertex groups in the graphs of groups considered are not 2-ended, the whole graph of groups cannot be linearly parametrized. Moreover, the edge groups do not necessarily embed into vertex groups in an almost-malnormal way. To overcome those problems, we will consider the elementary closure of subgroups. A systematic study of elementary closures of WPD subgroups (which include cyclic subgroups of hyperbolic groups as a special case) is carried out in \cite{DGO}, where the authors show such subgroups need to be hyperbolically embedded in the ambient group. For the sake of self-containment, we recall some useful properties of the elementary closure.

\begin{defn}[{\bf Elementary closure}]
Let $G$ be a group and let $H$ be a subgroup of $G$. We define the \emph{elementary closure} of $H$ in $G$ as the subgroup
\[E_G(H)=\{g\in G\mid \dhaus{(gH,H)}<\infty\}.\]
\end{defn}
\begin{lemma} \label{lem:elementary closure is maximal for commensurability}
Let \(H, K\) be subgroups of \(G\) such that \(H \cap K\) has finite index in both \(H\) and \(K\), then \(K \leq E_G(H)\). 
\end{lemma}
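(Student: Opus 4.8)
The plan is to argue directly from the definition $E_G(H) = \{g \in G \mid \dhaus(gH, H) < \infty\}$, where $\dhaus$ is computed with respect to a fixed (left-invariant) word metric $d$ on $G$, with associated word length $|\cdot|$; in particular every left translation $x \mapsto gx$ of $G$ is an isometry. Write $L = H \cap K$, so that by hypothesis $[H : L] < \infty$ and $[K : L] < \infty$. First I would record the standard observation that a finite-index subgroup lies at finite Hausdorff distance from its overgroup: if $[B:A]<\infty$ and $B = \bigsqcup_{i=1}^{m} A b_i$, then for $\ell \in A$ one has $d(\ell b_i, \ell) = |b_i^{-1}|$, so $B \subseteq \mathcal{N}_R(A)$ with $R = \max_i |b_i^{-1}|$, while trivially $A \subseteq B$; hence $\dhaus(A,B) < \infty$.

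Applying this twice, to $L \le H$ and to $L \le K$, yields $\dhaus(H,L) < \infty$ and $\dhaus(K,L) < \infty$, and therefore $\dhaus(H,K) \le \dhaus(H,L) + \dhaus(L,K) < \infty$ by the triangle inequality for Hausdorff distance. So in fact $H$ and $K$ themselves are at finite Hausdorff distance.

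Next I would fix an arbitrary $k \in K$. Since left translation by $k$ is an isometry of $(G,d)$ and $kK = K$, we get $\dhaus(kH, K) = \dhaus(kH, kK) = \dhaus(H,K) < \infty$. Combining this with the previous step gives $\dhaus(kH, H) \le \dhaus(kH, K) + \dhaus(K,H) < \infty$, so $k \in E_G(H)$. As $k$ was arbitrary, $K \subseteq E_G(H)$; and since $E_G(H)$ is a subgroup of $G$ — immediate from the invariance of $\dhaus$ under left translations, which shows it is closed under products and inverses — we conclude $K \le E_G(H)$.

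I do not expect a serious obstacle here: the argument is a short chain of Hausdorff-distance estimates built on the one elementary computation above. The only points requiring a little care are (i) fixing the left/right convention for the word metric consistently — if one instead uses the right-invariant convention, replace left translations by right translations and right cosets by left cosets, and the identical computation goes through — and (ii) noting that $E_G(H)$ genuinely is a subgroup, so that the set inclusion $K \subseteq E_G(H)$ upgrades to the subgroup containment $K \le E_G(H)$.
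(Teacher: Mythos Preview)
Your proof is correct and takes essentially the same approach as the paper's: both use that a finite-index subgroup lies at finite Hausdorff distance from its overgroup, and then bound $\dhaus(kH,H)$ via the triangle inequality. Your packaging is slightly cleaner --- you first record $\dhaus(H,K)<\infty$ and then exploit $kK=K$ together with left-invariance --- whereas the paper chases elements directly (approximating $h$ by some $k_0\in H\cap K$, then $kk_0$ by some $h_0\in H\cap K$), but the underlying estimate is identical.
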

\begin{proof}
Let \(k \in K\) and \(h \in H\). Our goal is to uniformly bound \(d(kh, H)\). Since \(H\cap K\) has finite index in \(H\), there is \(k_0 \in H \cap K\) at uniformly bounded distance from \(h\). Note that \(kk_0\in K\). Since \(H\cap K\) has finite index in \(K\), there is \(h_0 \in H \cap K\) at uniformly bounded distance from \(k k_0\). By triangular inequality, we get a uniform bound on \(d(kh, h_0)\).
\end{proof}
Note that, in general, \(H\) will not have finite index in \(E_G(H)\). A simple example of this is given by considering the subgroup \(\langle a \rangle \) in \(\langle a \rangle \oplus \langle b \rangle \cong \mathbb{Z}^2\). Indeed, in this case we would have \(E_{\mathbb{Z}^2}(\langle a \rangle) = \mathbb{Z}^2\). This is not the case, however, for 2-ended subgroups of hyperbolic groups.
\begin{lemma}[{\cite[Lemma~6.5]{DGO}}]\label{remark: elementarizer}
Let \(G\) be a hyperbolic group and \(H\) be a 2-ended subgroup. Then \(E_G(H)\) is 2-ended.
\end{lemma}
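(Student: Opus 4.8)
The plan is to reduce the statement to the case of an infinite cyclic subgroup and then to show that such a subgroup sits with finite index inside its elementary closure; since a finitely generated group containing a copy of $\ZZ$ with finite index is 2-ended, this finishes the argument. For the reduction: since $H$ is 2-ended it contains an infinite order element $h$ with $\langle h\rangle$ of finite index in $H$, so $\langle h\rangle\subseteq H$ and $H$ is covered by finitely many $\langle h\rangle$--cosets, each in a bounded neighbourhood of $\langle h\rangle$, giving $\dhaus(H,\langle h\rangle)<\infty$. Because left translation by any $g\in G$ is an isometry, $\dhaus(gH,g\langle h\rangle)=\dhaus(H,\langle h\rangle)$, and the triangle inequality for $\dhaus$ then yields $\dhaus(gH,H)<\infty$ if and only if $\dhaus(g\langle h\rangle,\langle h\rangle)<\infty$. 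Hence $E_G(H)=E_G(\langle h\rangle)$, and it suffices to prove that $E:=E_G(\langle h\rangle)$ is 2-ended.

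Next I would produce a \emph{uniform} fellow-travelling constant for $E$. Working in a Cayley graph of $G$, a proper geodesic $\delta$--hyperbolic space, the orbit $\gamma=\{h^n\}_{n\in\ZZ}$ is a $(\lambda,c)$--quasigeodesic line with $\lambda,c$ depending only on $h$ and $G$, and it has a well-defined endpoint pair $\{h^{+\infty},h^{-\infty}\}\subseteq\partial G$. For $g\in E$ the set $g\gamma$ is again a $(\lambda,c)$--quasigeodesic line, at finite (a priori $g$--dependent) Hausdorff distance from $\gamma$; since finite Hausdorff distance between quasigeodesic lines forces equality of their endpoint pairs, $g\gamma$ and $\gamma$ have the same endpoints on $\partial G$. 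By stability of quasigeodesics in a $\delta$--hyperbolic space, two $(\lambda,c)$--quasigeodesic lines with the same endpoint pair lie at Hausdorff distance at most some $D_0=D_0(\lambda,c,\delta)$; hence $\dhaus(g\gamma,\gamma)\le D_0$ for \emph{every} $g\in E$.

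Finally I would deduce finite index. Let $B=\{x\in G:\vert x\vert\le D_0\}$, a finite ball. For $g\in E$, applying the bound above to the point $g\in g\gamma$ (the image of $h^0=1$) gives $n\in\ZZ$ with $d(g,h^n)\le D_0$, i.e. $h^{-n}g\in B$, and $h^{-n}g\in E$ as well, so $h^{-n}g\in B\cap E$. Since $\langle h\rangle g=\langle h\rangle(h^{-n}g)$, every right $\langle h\rangle$--coset in $E$ is of the form $\langle h\rangle b$ with $b\in B\cap E$, whence $[E:\langle h\rangle]\le\vert B\cap E\vert<\infty$. Consequently $E$ is finitely generated (Schreier's lemma) and contains $\langle h\rangle\cong\ZZ$ with finite index, so $E$ is 2-ended; by the first step so is $E_G(H)$.

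The main obstacle is the middle step: the definition of $E_G$ only supplies, for each individual $g$, a \emph{finite} Hausdorff-distance bound between $g\langle h\rangle$ and $\langle h\rangle$, and this must be upgraded to a bound \emph{uniform in} $g$. This is precisely where hyperbolicity is used, via the two facts that finite Hausdorff distance between quasigeodesic lines preserves endpoint pairs and that quasigeodesic lines with a common endpoint pair are uniformly Hausdorff-close (the Morse lemma / stability of quasigeodesics). Once the uniform constant $D_0$ is in hand, the finite-index conclusion is a routine counting argument.
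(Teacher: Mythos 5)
Your argument is correct. Note, however, that the paper does not prove this lemma at all: it is quoted as a black box from the literature (\cite[Lemma~6.5]{DGO}), so there is no internal proof to compare against. What you have written is a self-contained proof along the standard lines: reduce to $E_G(\langle h\rangle)$ for an infinite-order $h$ with $\langle h\rangle$ of finite index in $H$ (the Hausdorff-distance bookkeeping here is fine, and it does show $E_G(H)=E_G(\langle h\rangle)$); use the fact that cyclic subgroups of hyperbolic groups are undistorted, so $\{h^n\}$ is a $(\lambda,c)$--quasigeodesic line; observe that every $g\in E$ sends this line to a $(\lambda,c)$--quasigeodesic line at finite Hausdorff distance, hence with the same endpoint pair in $\partial G$, and upgrade the $g$--dependent bound to a uniform one via the Morse lemma together with the fact that geodesic lines with the same endpoints at infinity are uniformly close in a proper hyperbolic space; finally run the coset-counting argument to get $[E:\langle h\rangle]<\infty$. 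In effect you have identified $E_G(\langle h\rangle)$ with the stabilizer of the pair $\{h^{\pm\infty}\}$, which is the classical route to this statement. Two cosmetic remarks: the undistortedness of $\langle h\rangle$ is itself a nontrivial (though standard) fact that deserves a citation rather than an assertion, and the appeal to Schreier's lemma is unnecessary --- a group containing a finitely generated subgroup of finite index is automatically finitely generated, and indeed any group with an infinite cyclic subgroup of finite index is 2-ended with no further argument. Neither point affects correctness; your proof would serve as a valid replacement for the citation.
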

In particular, observe that \(E_G(H)\) has to be the maximal cyclic subgroup containing \(H\). This yields the following useful lemma.

\begin{lemma}\label{remark: commensurability and malnormality}
Let \(H_1, \dots, H_n\) be 2-ended subgroups of a hyperbolic group \(G\). Then \begin{enumerate}
    \item \(H_i\) and \(H_j\) are commensurable in \(G\) if and only if \(E_G(H_i)\) and \(E_G(H_j)\) are conjugate to each other.
    \item $\{E_G(H_1),\ldots,E_G(H_n)\}$ is an almost-malnormal collection if and only if $H_i$ and $H_j$ are non-commensurable for every $i\neq j$;
\end{enumerate}
\end{lemma}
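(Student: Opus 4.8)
The plan is to reduce both parts to a handful of standard facts about elementary closures of $2$-ended subgroups of the hyperbolic group $G$. By Lemma~\ref{remark: elementarizer} the subgroup $E_G(H)$ is itself $2$-ended; since an infinite subgroup of a $2$-ended group has finite index, $E_G(H)$ is in fact \emph{the} maximal $2$-ended subgroup of $G$ containing $H$. Using this together with Lemma~\ref{lem:elementary closure is maximal for commensurability}, I would first record three facts. \textbf{(a)} $E_G$ is conjugation-equivariant: $E_G(gHg^{-1})=gE_G(H)g^{-1}$, because right translation by $g$ changes the Hausdorff distance of two subsets by at most the additive constant $2|g|$, hence preserves the relation ``finite Hausdorff distance''. \textbf{(b)} If $L\leq K$ are both $2$-ended then $E_G(L)=E_G(K)$, since each side is the (unique) maximal $2$-ended subgroup containing $L$; in particular $E_G(E_G(H))=E_G(H)$. \textbf{(c)} $N_G(E_G(H))=E_G(H)$: if $g$ normalises $C:=E_G(H)$ then the left coset $gC$ equals the right coset $Cg$, and by left-invariance of the word metric $\dhaus(Cg,C)\leq|g|<\infty$; hence $g\in E_G(C)=E_G(H)$ by (b).

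For part $(1)$, the ``if'' direction is immediate: if $gE_G(H_i)g^{-1}=E_G(H_j)$ then $gH_ig^{-1}$ and $H_j$ are both finite-index subgroups of the $2$-ended group $E_G(H_j)$ (again since $H_k$ is infinite in the $2$-ended $E_G(H_k)$), so $gH_ig^{-1}\cap H_j$ is finite index in both, which is exactly commensurability. For the ``only if'' direction, commensurability of $H_i,H_j$ provides $x$ with $L:=xH_ix^{-1}\cap H_j$ finite index in both $xH_ix^{-1}$ and $H_j$; then $L$ is $2$-ended and $L\leq H_j$, $L\leq xH_ix^{-1}$, so fact (b) gives $E_G(H_j)=E_G(L)=E_G(xH_ix^{-1})$, and fact (a) rewrites the last term as $xE_G(H_i)x^{-1}$, so the closures are conjugate.

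For part $(2)$ I would first unwind the definition (Definition~\ref{def: almost malnormal collection}): $\{E_G(H_1),\dots,E_G(H_n)\}$ is an almost-malnormal collection precisely when \emph{(i)} $E_G(H_i)\cap gE_G(H_j)g^{-1}$ is finite for all $i\neq j$ and all $g$, and \emph{(ii)} $E_G(H_i)\cap gE_G(H_i)g^{-1}$ is finite whenever $g\notin E_G(H_i)$. Assume the $H_k$ are pairwise non-commensurable. For (i): an infinite intersection would be $2$-ended and finite index in both $E_G(H_i)$ and $gE_G(H_j)g^{-1}$, so $E_G(H_i)$ and $E_G(H_j)$ would be commensurable, and since each $H_k$ has finite index in $E_G(H_k)$, transitivity of commensurability would make $H_i$ commensurable with $H_j$, a contradiction. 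For (ii): if $L':=E_G(H_i)\cap gE_G(H_i)g^{-1}$ were infinite, then $L'$ is $2$-ended with $L'\leq E_G(H_i)$ and $L'\leq gE_G(H_i)g^{-1}=E_G(gH_ig^{-1})$, so facts (b) and (a) force $E_G(H_i)=E_G(L')=gE_G(H_i)g^{-1}$, i.e.\ $g\in N_G(E_G(H_i))=E_G(H_i)$ by (c), contradicting $g\notin E_G(H_i)$. Conversely, if $H_i$ and $H_j$ are commensurable for some $i\neq j$, part $(1)$ yields $g$ with $gE_G(H_i)g^{-1}=E_G(H_j)$, and then taking conjugator $g^{-1}$ gives $E_G(H_i)\cap g^{-1}E_G(H_j)g=E_G(H_i)$, which is infinite, so (i) fails and the collection is not almost-malnormal.

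The finite-index and commensurability bookkeeping is routine. The one point that genuinely uses hyperbolicity beyond Lemma~\ref{remark: elementarizer} is fact (c) — that a maximal $2$-ended subgroup of $G$ is self-normalising — and this is exactly what makes the diagonal ($i=j$) clause of almost-malnormality automatic; so I expect establishing (c) cleanly (via the coset computation above) to be the only subtle step.
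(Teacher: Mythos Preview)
Your proof is correct and follows essentially the same strategy as the paper's: both hinge on the fact that $E_G(H)$ is the maximal $2$-ended subgroup containing $H$ and on the idempotence $E_G(E_G(H))=E_G(H)$, so that commensurable closures are forced to coincide after a suitable conjugation (the paper reaches this via Lemma~\ref{lem:elementary closure is maximal for commensurability}, you via your facts (a) and (b)). Your argument is in fact more complete than the paper's: in proving item~(2) the paper only verifies the off-diagonal clause $i\neq j$ of almost-malnormality and omits the diagonal clause (that $g\notin E_G(H_i)$ forces $\lvert E_G(H_i)\cap gE_G(H_i)g^{-1}\rvert<\infty$), whereas you handle this explicitly via your fact~(c) on self-normalisation of $E_G(H)$, and you also supply the (easy) converse direction of~(2).
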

\begin{proof}
Since \(H_i\) has finite index in \(E_G(H_i)\), we have that \(E_G(H_i)\) and \(E_G(H_j)\) are commensurable if and only if \(H_i\) and \(H_j\) are. In particular, this shows one implication. Suppose that \(E_G(H_i)\) and \(E_G(H_j)\) are commensurable. Up to conjugating one of them we have  that \(gE_G(H_i)g^{-1} \cap E_G(H_j)\) has infinite index in both \(gE_G(H_i)g^{-1}\), and \( E_G(H_j)\). By Lemma \ref{lem:elementary closure is maximal for commensurability} we have \(gE_G(H_i)g^{-1} \leq E_G(E_G(H_j)) = E_G(H_j)\) and, by symmetry, \(E_G(H_j) \leq gE_G(H_i)g^{-1}\). Hence, \(E_G(H_i)\) and \(E_G(H_j)\) are conjugate.

For the second item, observe that if \(E_G(H_i)\) and \(E_G(H_j)\) are not commensurable, since they are 2-ended groups it must follow \(\vert E_G(H_i) \cap gE_G(H_j) g^{-1} \vert \leq \infty\) for all \(g \in G\). Hence they are almost-malnormal. 
\end{proof}

We now introduce the conjugacy graph associated to an edge group. 

\begin{defn}[\bf Commensurability class]

Let $G$ be a group and let $\mathcal{P}$ be a collection of 2-ended subgroups of $G$. We denote by $\approx$ the equivalence relation on $\mathcal{P}$ induced by commensurability. That is to say, $P_1\approx P_2$ whenever \(P_1, P_2\) are commensurable (as in Definition~\ref{def: commensurable subgroups}). For each $P\in\mathcal{P}$ we use $\llbracket P\rrbracket$ to denote its commensurability class.
\end{defn}

\begin{defn}[\bf Equivalence class]
Let $\mc{G}$ be a graph of groups with 2-ended edge groups.

Consider the multiset 
\[U=\{\phi_{e^+}(G_e), \phi_{e^-}(G_e)\mid e \in E(\Gamma)\}\]
of all the images of edge groups into vertex groups counted with repetitions.

Let \(\sim_0\) be the relation on \(U\) defined by imposing \(H_1 \sim_0 H_2\) whenever either there exists \(e\) such that \(H_1 = \phi_{e^+}(G_e)\) and \(H_2 = \phi_{e^-}(G_e)\), or \(H_1, H_2\in G_v\) for some \(v\) and \(H_1\approx H_2\) in $G_v$. Extend $\sim_0$ to an equivalence relation $\sim$ on $U$ by taking the transitive closure of $\sim_0$.

For a vertex group \(H\), we denote by \([H]\) its equivalence class with respect to \(\sim\).
\end{defn}

\begin{defn}[\bf Conjugacy graph]\label{definition: conjugacy graph} Let \(\mc{G}\) be a graph of groups with 2-ended edge groups and let $[H]$ be the equivalence class of an edge group in $\mc{G}$. We define the \emph{conjugacy graph} associated to \([H]\) as the graph of groups \(\Delta_{[H]}\) defined as follows. 

For each vertex group \(G_v \in \mc{G}\), let \([H]_v = \{H' \in [H]\mid H'\leq G_v\}\).

{\bf Vertices: }For each vertex \(v\) of the original graph \(\mc{G}\) and commensurability class \(\llbracket K \rrbracket\) of  \([H]_v\), add one vertex \(v_K\) to \(\Delta_{[H]}\). Choose once and for all a representative \(K\in \llbracket K \rrbracket\) and define \(E_{G_v}(K)\) to be the vertex group associated to \(v_K\). 

{\bf Edges: }For each edge \(e \in \Gamma\) such that \(\phi_{e^+}(G_e) \in [H]\), add an edge between \(\llbracket \phi_{e^+}(G_e) \rrbracket\) and \(\llbracket \phi_{e^-}(G_e)\rrbracket\), with associated edge group \(G_e\). To define the edge maps, let \(K\) be the chosen representative of \(\llbracket \phi_{e^+}(G_e) \rrbracket\). Then there is \(h \in G_{e^+}\) such that \(\phi_{e^+}(G_e)^h \subseteq E_{G_{e^+}}(K)\). If \(\phi_{e^+} \colon G_e \to G_{e^+}\) was the edge map of \(\mc{G}\), let the attaching map of \(\Delta_{[H]}\) be defined as \(\phi_{e^+}^h \colon G_e \to E_{G_{e^+}}(K)\). Note that, by Remark \ref{remark: commensurability and malnormality}, this map is well defined.
\end{defn}

\begin{rmk}\label{remark: remark on conjugacy graph}
In this paper, we consider only graphs of groups with 2-ended edge groups. In particular, by Lemma \ref{remark: elementarizer} the vertex groups of the conjugacy graphs are 2-ended.  As the edge groups of the conjugacy graphs are the same as the original edge groups, the conjugacy graphs have 2-ended vertex and edge groups.
\end{rmk}

\begin{example}
Let $\mathbb{F}_2=\langle a,b\rangle$ be the free group of rank $2$ and consider the group $G$ to be $\pi_1(\mathcal{G})=\mathbb{F}_2\ast_{ta^3t^{-1}=ba^2b^{-1}}$. By construction, the splitting of $G$ has one vertex $v$ with associated vertex group $G_v=\mathbb{F}_2$ and one edge $e$ with associated cyclic edge group $G_e$. We now construct the conjugacy graph $\Delta_{[G_e]}$ associated to $[G_e]$. Note first that the images of the single edge group are commensurable in the vertex group, as $b\langle a^3\rangle b^{-1}\cap\langle ba^2b^{-1}\rangle$ is infinite. Thus, there is a single conjugacy class of $[G_e]$ in $\mathbb{F}_2$ and, therefore, a single vertex in $\Delta_{[H]}$. The associated vertex group of $\Delta_{[H]}$ is $bE_{\mathbb{F}_2}(a^2)b^{-1}=b\langle a\rangle b^{-1}$. There is also a single edge group in $\Delta_{[H]}$ with associated edge group equal to the one in $\mathcal{G}$. The associated attaching maps are $\phi_{e^+}$ and $\phi_{e^-}^b$. The conjugacy graph associated to $[G_e]$ results in the group $\langle a\rangle\ast_{ta^2t^{-1}=a^3}$.
\end{example}

In the following two lemmas, we describe how is the linear parametrization in a graph of 2-ended groups extended to the general setting using the conjugacy graph.

\begin{lemma}\label{lemma: conjugacy path and conjugacy graph}
Let $G\cong\pi_1(\mc{G})$ be a graph of hyperbolic groups with 2-ended edge subgroups and let $e$ be an edge in the underlying graph of $\mc{G}$. If $\Delta_{[G_e]}$ denotes the conjugacy graph associated to $[G_e]$, then $e$ is unbalanced in $\mathcal{G}$ if and only if $\pi_1(\Delta_{[G_e]})$ is unbalanced. 
\end{lemma}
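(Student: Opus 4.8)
The statement relates unbalancedness of an edge $e$ in $\mc{G}$ to unbalancedness of $\pi_1(\Delta_{[G_e]})$, so the plan is to carefully track what an unbalancing equation $h\phi_+(a)^ih^{-1} = \phi_-(a)^j$ translates to in each of the two groups, exploiting the fact that the conjugacy graph is built so that its fundamental group embeds (up to finite index of the pieces) into $\pi_1(\mc{G})$. First I would set up the translation apparatus: by Lemma~\ref{lemma: conjugacy path in G}, two infinite-order elements lying in vertex groups of $\mc{G}$ are conjugate in $\pi_1(\mc{G})$ if and only if there is a path of edges $e_1,\dots,e_n$ and conjugating data $g_0,\dots,g_n$ with $\phi_{e_{i+1}^-}(G_{e_{i+1}})\cap g_i\phi_{e_i^+}(G_{e_i})g_i^{-1}\neq\{1\}$ at each step. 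The crucial point is that the non-triviality of these intersections means every edge traversed along such a conjugating path has its images commensurable, hence lies in the \emph{same} $\sim$-equivalence class $[G_e]$. That is exactly the combinatorial data recorded by $\Delta_{[G_e]}$: the edges of $\Delta_{[G_e]}$ are precisely the edges $e'$ of $\mc{G}$ with $\phi_{e'^\pm}(G_{e'})\in[G_e]$, and its vertex groups are the elementary closures $E_{G_v}(K)$ for the commensurability classes $\llbracket K\rrbracket$ that appear.

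For the implication ``$e$ unbalanced in $\mc{G}$ $\Rightarrow$ $\pi_1(\Delta_{[G_e]})$ unbalanced'': suppose $h\phi_+(a)^ih^{-1}=\phi_-(a)^j$ with $|i|\neq|j|$ and $h\in\pi_1(\mc{G}-e)$. I would first replace $\phi_+(a)$ and $\phi_-(a)$ by suitable powers so that (after conjugating $h$ appropriately inside the incident vertex groups) they lie in the chosen representative elementary closures $E_{G_{e^+}}(K^+)$ and $E_{G_{e^-}}(K^-)$; passing to powers only scales $i$ and $j$ by the same amount, so $|i|\neq|j|$ persists. Then apply Lemma~\ref{lemma: conjugacy path in G} to the conjugation witnessing the equation: it produces a path of edges along which all intersections are infinite, hence all traversed edges lie in $[G_e]$, and the conjugating elements can be broken into pieces $g_k$ lying in the appropriate vertex groups; pushing each $g_k$ into the relevant elementary closure (using that $E_{G_v}(-)$ is the maximal 2-ended subgroup containing a given 2-ended one, Lemma~\ref{remark: elementarizer} and Lemma~\ref{remark: commensurability and malnormality}) and reassembling with the stable letters $t_{e_k}$ of $\Delta_{[G_e]}$, I obtain an element $\tilde h$ of $\pi_1(\Delta_{[G_e]}-e)$ (or of $\pi_1(\Delta_{[G_e]})$) realizing $\tilde h\tilde a^{i'}\tilde h^{-1}=\tilde a^{j'}$ with $|i'|\neq|j'|$, so $e$ is unbalanced in $\Delta_{[G_e]}$; by Lemma~\ref{Lem: unbalanced group iff unbalanced edge} (the conjugacy graph has 2-ended, hence balanced, vertex groups by Remark~\ref{remark: remark on conjugacy graph}) this gives $\pi_1(\Delta_{[G_e]})$ unbalanced.

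For the converse, $\pi_1(\Delta_{[G_e]})$ unbalanced $\Rightarrow$ $e$ unbalanced in $\mc{G}$: since $\Delta_{[G_e]}$ has 2-ended vertex groups, Lemma~\ref{Lem: unbalanced group iff unbalanced edge} gives an unbalanced edge $e_0$ of $\Delta_{[G_e]}$, and by construction $e_0$ corresponds to an edge of $\mc{G}$ whose images lie in $[G_e]$. The defining equation of its unbalancedness involves an element $h$ of $\pi_1(\Delta_{[G_e]}-e_0)$; I would map this back into $\pi_1(\mc{G})$ using the natural identification of the vertex groups $E_{G_v}(K)$ and edge groups $G_{e'}$ of $\Delta_{[G_e]}$ with subgroups of the vertex/edge groups of $\mc{G}$, so that the same equation holds in $\pi_1(\mc{G})$ after we reinterpret $e_0$ as an edge of $\mc{G}$. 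One then checks the resulting $h$ lies in $\pi_1(\mc{G}-e_0)$ (this uses that the embedding of $\pi_1(\Delta_{[G_e]}-e_0)$ is compatible with removing $e_0$ from both graphs), so $e_0$ is unbalanced in $\mc{G}$; since all edges of $[G_e]$ have commensurable images, a short argument (conjugating a power of $G_{e_0}$ into $G_e$ and back, à la the computation in Corollary~\ref{cor: unbalanced implies almost BS}) upgrades ``some edge of $[G_e]$ is unbalanced'' to ``$e$ itself is unbalanced'', or alternatively one observes that if $e$ were balanced then so would be every edge in its class by the same commensurability transport. The main obstacle is bookkeeping: ensuring that when we pass between $\mc{G}$ and $\Delta_{[G_e]}$ the conjugating elements really can be taken inside the designated elementary closures and that the rewriting does not secretly change the ratio $|i|/|j|$ — this is where the structure theorem for 2-ended groups (any two infinite-order elements have common powers) and the balancedness of all intermediate 2-ended groups are used repeatedly to keep the exponents under control.
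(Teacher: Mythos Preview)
Your proposal follows essentially the same approach as the paper: both directions use Lemma~\ref{lemma: conjugacy path in G} to translate an unbalancing conjugation between $\pi_1(\mc{G})$ and $\pi_1(\Delta_{[G_e]})$, relying on the facts that the attaching maps of the conjugacy graph are conjugates of those in $\mc{G}$ and that every edge traversed by such a conjugacy path lies in $[G_e]$. You are somewhat more explicit about the bookkeeping (pushing conjugators into elementary closures, and the upgrade from an auxiliary unbalanced edge $e_0$ back to $e$) than the paper, which in the backward direction simply asserts that the translated conjugator lies in $\pi_1(\mc{G}-e)$.
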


\begin{proof}
Assume first that $\mathcal{G}$ contains an unbalanced edge $e$. Therefore, there exists an infinite order element $a\in G_e$ and $h\in\pi_1(\mc{G}-e)$ such that $h\phi_{e^+}(a)^ih^{-1}=\phi_{e^-}(a)^j$ for some $|i|\neq |j|$. By Lemma \ref{lemma: conjugacy path in G} there is a path $e_1,\ldots, e_k$ in the graph of $\mc{G}-e$  with \(A_{e(1)} = G_\alpha, B_{e(k)}  = G_\beta\) such that $B_{e_j}^{h_j}\cap A_{e_{j+1}}$ is non-trivial for every $j=1,\ldots,k-1$ (i.e $E_{G_{e_j
^+}}(B_{e_j})^{h_j}=E_{G_{e_j^+}}(A_{e_{j+1}})$) and elements \(h_0 \in G_\alpha\) and \(h_i \in G_{b(e_i)}\)  satisfying
\begin{equation}\label{equation: conjugacy path 3}
(t_{e_k}h_k\cdots h_1h_0) \phi_{e^+}(a)^i (t_{e_k}h_k\cdots h_1h_0)^{-1}=\phi_{e^-}(a)^j,
\end{equation}
for some $|i|\neq |j|$.

This means that the conjugacy graph $\Delta_{[G_e]}$ splits as $\pi_1(\Delta_{[G_e]}-e)\ast_{t_e}$. Recall that by definition the attaching maps in $\Delta_{[G_e]}$ are defined as conjugates $\phi_{e'^+}^{h_{e'}}$ in $G_{e'^+}$ of the attaching maps $\phi_{e'^+}$ in $\mathcal{G}$. Therefore, since $\phi_{e^+}(g),\phi_{e^-}(g')$ are conjugate in $\pi_1(\mathcal{G})$, following Equation \eqref{equation: conjugacy path 3} we obtain that $\phi_{e^+}(g)^i=\phi_{e^-}(g)^j$ in $\pi_1(\Delta_{[G_e]}-e)$ where $|i|\neq |j|$.

Assume now that, $\pi_1(\Delta_{[G_e]})$ is unbalanced. We can apply Lemma \ref{lemma: conjugacy path in G} to obtain, \begin{equation}\label{equation second implication unbalanced CG implies unbalanced}(h_kt^{\epsilon_k}_{e_k}\cdots h_1t_{e_1}^{\epsilon_1}h_0)a^p(h_kt^{\epsilon_k}_{e_k}\cdots h_1t_{e_1}^{\epsilon_1}h_0)^{-1}=a^q,\end{equation}
for some $|p|\neq |q|$. Here, \(a\) is of infinite order, the various elements \(h_i\) and \(a\) belong to vertex groups and at least one \(\epsilon_i\) is non zero. Our goal is to modify the above equation to obtain an analogous one that holds in \(\pi_1(\mc{G})\). Let \(H_0\) be the vertex group of \(\Delta_{[G_e]}\) that contains \(a\) and let \(H_1\) be the other vertex group adjacent to \(e_1\) in \(\Delta_{[G_e]}\) (possibly, \(H_0 = H_1\)). Let \(x \in H_1\) be such that \((t_{e_1}^{\epsilon_1}h_0)a^p(t_{e_1}^{\epsilon_1}h_0)^{-1} = x\) in \(\pi_1(\Delta_{[G_e]})\). By definition of conjugacy graphs, there are vertex groups \(G_0, G_1\) of \(\mc{G}\) such that \(H_i \leq G_i\). Since the attaching maps in the conjugacy graph are defined as a conjugates of the attaching maps of \(\mc{G}\), there exist \(k_i \in G_i\) such that the following holds in \(\pi_1(\mc{G})\):
\[(k_1t_{e_1}^{\epsilon_1}h_0k_0)a^p(k_1t_{e_1}^{\epsilon_1}h_0k_0)^{-1} = x\]
Let \(y_1 = (k_1t_{e_1}^{\epsilon_1}h_0k_0)\). Proceeding in this way, we find an element \(y_k = y\) of \(\pi_1(\mc{G}-e)\) such that \[ya^{p}y^{-1} = a^q\] with \(\vert p \vert \neq \vert q \vert\), showing that \(e\) is unbalanced in \(\mc{G}\). \end{proof}

\begin{lemma}\label{lemma: 2 implies 1}
Let \(\mc{G}\) be a graph of groups with hyperbolic vertices and 2-ended edge subgroups. Suppose, moreover, that for each edge \(e\) the conjugacy graph \(\Delta_{[G_e]}\) is linearly parametrizable. Then \(\pi_1(\mc{G})\) admits a hierarchically hyperbolic group structure. 
\end{lemma}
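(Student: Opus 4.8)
The plan is to verify the two hypotheses of the combination theorem for hierarchically hyperbolic groups, Theorem~\ref{comb_thm_ver2}: namely, that every vertex and edge group can be equipped with a hierarchically hyperbolic structure so that each edge monomorphism $\phi_{e^\pm}\colon G_e\to G_{e^\pm}$ is a coarsely Lipschitz glueing hieromorphism, and that every vertex group has the intersection property and clean containers. The coarsely Lipschitz condition is free, since any homomorphism of finitely generated groups is coarsely Lipschitz for word metrics; the work is in choosing the structures compatibly, and the conjugacy graphs are exactly the bookkeeping device for this.

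First I would fix, for each equivalence class $[H]$ of edge groups, a linear parametrization $\Phi_{[H]}\colon\pi_1(\Delta_{[H]})\to\dihed$. Recall that the vertices of $\Delta_{[H]}$ are the $2$-ended subgroups $E_{G_v}(K)$ (Lemma~\ref{remark: elementarizer}), where $K$ runs over the commensurability classes of $[H]_v$ and $v$ over $V(\mc{G})$, and its edges are among the original $G_e$. Using Lemma~\ref{lemma: pulling back hierarchical structures}, I pull back the trivial structure $(\dihed,\mf{T})$ along the quasi-isometric homomorphisms $\Phi_{[H]}|_{E_{G_v}(K)}$ and $\Phi_{[H]}|_{G_e}$, obtaining hierarchically hyperbolic structures on each such $E_{G_v}(K)$ and each $G_e$. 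One checks these are unambiguous: a given $2$-ended subgroup $E_{G_v}(K)$ occurs in only one conjugacy graph, because two commensurability classes in distinct $\sim$-classes have no common member; and the structure on an edge group $G_e$ does not depend, up to the $\dihed$-action, on which of its two conjugacy-graph attaching maps is used. By functoriality of the pullback (Lemma~\ref{lem: functioriality of pullback}), each attaching map of $\Delta_{[H]}$, being a quasi-isometric homomorphism of $2$-ended groups, is then a glueing hieromorphism between the corresponding pulled-back structures.

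Next I would fix a vertex $v$ and form the finite family $E_{G_v}(K_1),\dots,E_{G_v}(K_r)$ of elementary closures of the images of the edge groups incident to $v$, one closure per commensurability class. Since the chosen representatives $K_j$ are pairwise non-commensurable, Lemma~\ref{remark: commensurability and malnormality} gives that $\{E_{G_v}(K_j)\}$ is almost-malnormal, and as $2$-ended subgroups of a hyperbolic group they are strongly quasiconvex, so $G_v$ is hyperbolic relative to $\{E_{G_v}(K_j)\}$ by Theorem~\ref{thm:bowditch}. Now I apply Theorem~\ref{theorem: readaptation of relative HHG} with these peripherals (carrying their pulled-back structures from the previous paragraph) and with the family $\{K_\alpha\}$ taken to be the edge groups incident to $v$ together with the maps $\phi_{e^\pm}\colon G_e\to G_v$: by construction of the conjugacy graph, $\phi_{e^\pm}(G_e)$ has finite index in a conjugate of the relevant $E_{G_v}(K_j)$, and up to that conjugation the map is exactly the conjugacy-graph attaching map, which we showed is a glueing hieromorphism. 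Theorem~\ref{theorem: readaptation of relative HHG} then produces a hierarchically hyperbolic structure $\mf{S}_v$ on $G_v$ for which every incident $\phi_{e^\pm}$ is a glueing hieromorphism, and $\mf{S}_v$ inherits the intersection property and clean containers because the $2$-ended peripherals trivially have them. The degenerate situations --- $G_v$ itself $2$-ended, or $v$ incident to no edge --- I would treat directly, endowing $G_v$ with a pulled-back structure, respectively with any hierarchically hyperbolic structure on the hyperbolic group $G_v$.

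Finally, with all vertex groups carrying such $\mf{S}_v$, all edge groups carrying their pulled-back structures, every $\phi_{e^\pm}$ a coarsely Lipschitz glueing hieromorphism, and every vertex group having the intersection property and clean containers, Theorem~\ref{comb_thm_ver2} yields that $\pi_1(\mc{G})$ is a hierarchically hyperbolic group. I expect the main obstacle to be the compatibility step: ensuring that the structures the relatively hyperbolic combination theorem (Theorem~\ref{theorem: readaptation of relative HHG}) imposes on the vertex groups are genuinely the ones making the edge embeddings glueing hieromorphisms, which requires routing every commensuration through the single reference group $\dihed$ via the conjugacy graphs and carefully tracking the conjugating elements appearing in Definition~\ref{definition: conjugacy graph}.
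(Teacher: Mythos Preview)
Your proposal is correct and follows essentially the same approach as the paper: pull back the trivial $\dihed$-structure along the linear parametrizations of the conjugacy graphs to equip edge groups and elementary closures with compatible HHG structures, use Bowditch's criterion to see each $G_v$ as hyperbolic relative to the almost-malnormal family of elementary closures, apply Theorem~\ref{theorem: readaptation of relative HHG} to upgrade to an HHG structure on $G_v$ making the edge maps glueing hieromorphisms, and conclude with Theorem~\ref{comb_thm_ver2}. Your treatment is in fact slightly more explicit than the paper's in noting the coarsely Lipschitz condition is automatic and in handling the degenerate vertex cases.
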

\begin{proof}
For each vertex \(v\in V(\mc{G})\) let \(\{e_i\}\) be the set of incoming edges and let \(E(G_{e_i^{+}})\) be the elementary closure of the images of the edge groups in \(G_v\). Choose representatives \(\{ E_i\}\) of the commensurability classes \(\{\llbracket E(G_{e_i^{+}})\rrbracket\}\). Note that, by Remark \ref{remark: commensurability and malnormality}, \(\{ E_i\}\) forms an almost-malnormal collection of subgroups. In particular, \(G_v\) is hyperbolic relative to \(\{E_i\}\) by Theorem~ \ref{thm:bowditch}.

By assumption,  the conjugacy graph $\Delta_{[G_e]}$ associated to $[G_e]$ is linearly parametrizable for every $e$. That is to say, for every edge $e$ there exists $\Phi_{[G_e]}:\pi_1(\Delta_{[G_e]})\to\mathbb{D}_{\infty}^{(e)}$ such that $\Phi_{[G_e]}|_{G_x}:G_x\to\mathbb{D}_{\infty}^{(e)}$ is a quasi-isometry, where \(G_x\) is either a vertex or edge group of \(\Delta_{[G_e]}\). We endow the various groups \(G_x\) with the  hierarchical hyperbolic structure $(G_x,\{\mathbb{D}_{\infty}^{(e)}\})$ as described in Lemma \ref{lemma: pulling back hierarchical structures}. In particular, this allows to equip with a hierarchically hyperbolic group structure every edge group of \(\mc{G}\) and every group \(E_i \leq G_v\) as before. Note that this is well defined. Indeed, suppose that \(e,f\) are edges incoming in \(v\) and \(E(\phi_{e^{+}}(G_e)), E(\phi_{f^{+}}(G_f))\) are conjugate. Then \(e\sim f\) and hence \(E(\phi_{e^{+}}(G_e))\) and \(E(\phi_{f^{+}}(G_f))\) are identified in the conjugacy graph. Thus the hierarchically hyperbolic structure of the representative \(E\) does not depend on choices. Finally, note that since the trivial hierarchically hyperbolic structure on \(\dihed\) satisfies the intersection property and clean containers, so do all the hierarchically hyperbolic structures considered thus far.

Note that we are now in the hypotheses of Theorem~\ref{theorem: readaptation of relative HHG}, allowing us to equip every vertex group with a hierarchically hyperbolic structure \((G_v,\mathfrak{S}_v)\) that turn the edge maps into glueing hieromorphisms $(G_e,\mathfrak{S}_e)\hookrightarrow (G_v,\mathfrak{S}_v)$. Moreover \((G_v, \mf{S}_v)\) satisfy the intersection property and clean containers. Applying Theorem \ref{comb_thm_ver2} we obtain that $\pi_1(\mc{G})$ is a hierarchically hyperbolic group.
\end{proof}

We now show the proof of the main results of the section and the paper.

\begin{corollary}\label{corollary: main result}
Let \(\mc{G}\) be a graph of groups with hyperbolic vertices and 2-ended edge subgroups.  Assume that $G = \pi_1(\mc{G})$ is virtually torsion-free. The following are equivalent:
\begin{enumerate}
    \item $G$ is a hierarchically hyperbolic group;
    \item the conjugacy graph associated to every equivalence class of edges is linearly parametrizable;
    \item $G$ does not contain $BS(m,n)$ for $\lvert n\lvert\neq \lvert m\lvert$;
    \item \(G\) is balanced;
    \item \(G\) does not contain an infinite distorted cyclic subgroup.
\end{enumerate}
\end{corollary}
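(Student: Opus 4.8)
The plan is to establish the cycle of implications $(2)\Rightarrow(1)\Rightarrow(5)\Rightarrow(3)\Rightarrow(4)\Rightarrow(2)$, combining the machinery of this section with the characterisation of hierarchical hyperbolicity for (2-ended)-2-decomposable groups in Theorem~\ref{thm: balanced edges and BS version 2}. The implication $(2)\Rightarrow(1)$ is precisely Lemma~\ref{lemma: 2 implies 1}, and $(1)\Rightarrow(5)$ follows from \cite[Theorem 7.1]{HHSBoundaries} and \cite[Theorem 3.1]{durham2018corrigendum}, since infinite order elements of a hierarchically hyperbolic group are undistorted. For $(5)\Rightarrow(3)$: in a non-Euclidean $BS(m,n)$ the subgroup generated by $a$ is exponentially distorted, and since the inclusion $BS(m,n)\hookrightarrow G$ is Lipschitz and leaves $a$ of infinite order, a copy of $BS(m,n)$ in $G$ would produce a distorted infinite cyclic subgroup of $G$; hence $(5)$ forbids it.

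For $(4)\Rightarrow(2)$ I would argue as follows. If $G$ is balanced then $\mc{G}$ has no unbalanced edge, since an unbalanced edge forces $\pi_1(\mc{G})$ to be unbalanced (Remark~\ref{remark: edges in tree are balanced}). Fix an edge $e$ and consider the conjugacy graph $\Delta_{[G_e]}$. By Lemma~\ref{lemma: conjugacy path and conjugacy graph}, since $e$ is balanced in $\mc{G}$, the group $\pi_1(\Delta_{[G_e]})$ is balanced; and by Remark~\ref{remark: remark on conjugacy graph} the conjugacy graph has 2-ended vertex and edge groups. Applying Theorem~\ref{thm: balanced edges and BS version 2} to $\Delta_{[G_e]}$ yields that it is linearly parametrizable. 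As this holds for every edge, $(2)$ follows.

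The implication $(3)\Rightarrow(4)$ is the crux and the only place where the virtual torsion-freeness of $G$ is used; I would prove its contrapositive. Suppose $G$ is unbalanced. Hyperbolic groups are balanced --- if $hg^ih^{-1}=g^j$ for $g$ of infinite order, then comparing stable translation lengths gives $|i|\,\tau(g)=|j|\,\tau(g)$ with $\tau(g)>0$, so $|i|=|j|$ --- hence every vertex group of $\mc{G}$ is balanced, and Lemma~\ref{Lem: unbalanced group iff unbalanced edge} produces an unbalanced edge of $\mc{G}$. Since $G$ is virtually torsion-free, part~(2) of Corollary~\ref{cor: unbalanced implies almost BS} then gives a non-Euclidean Baumslag--Solitar subgroup of $G$, contradicting $(3)$. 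The only subtlety beyond this is making sure the (folklore) balancedness of hyperbolic groups is recorded so that Lemma~\ref{Lem: unbalanced group iff unbalanced edge} genuinely applies to $\mc{G}$; the rest of the argument is bookkeeping around Lemmas~\ref{lemma: 2 implies 1}, \ref{lemma: conjugacy path and conjugacy graph} and Theorem~\ref{thm: balanced edges and BS version 2}.
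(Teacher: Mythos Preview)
Your proof is correct and uses essentially the same ingredients as the paper's own proof: Lemma~\ref{lemma: 2 implies 1} for $(2)\Rightarrow(1)$, the external references for $(1)\Rightarrow(5)$, Lemma~\ref{Lem: unbalanced group iff unbalanced edge} together with Corollary~\ref{cor: unbalanced implies almost BS}(2) to pass between unbalancedness and non-Euclidean $BS$ subgroups, and Lemma~\ref{lemma: conjugacy path and conjugacy graph} with Theorem~\ref{thm: balanced edges and BS version 2} to handle the conjugacy graphs. The only difference is cosmetic: the paper runs the cycle $1\Rightarrow5\Rightarrow4\Rightarrow3\Rightarrow2\Rightarrow1$, whereas you run $2\Rightarrow1\Rightarrow5\Rightarrow3\Rightarrow4\Rightarrow2$; in particular the paper deduces $(4)\Rightarrow(3)$ trivially and puts the work into $(3)\Rightarrow(2)$ and $(5)\Rightarrow(4)$, while you deduce $(5)\Rightarrow(3)$ trivially and put the work into $(3)\Rightarrow(4)$ and $(4)\Rightarrow(2)$, but the actual content is identical.
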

\begin{proof}

\item[\fbox{$1\Rightarrow 5$}] Follows from 
\cite[Theorem 7.1]{HHSBoundaries} and  \cite[Theorem 3.1]{ durham2018corrigendum}.

\item[\fbox{$5\Rightarrow 4$}] If \(G\) is non-balanced, then by Corollary~\ref{Lem: unbalanced group iff unbalanced edge}, \(\mc{G}\) contains an unbalanced edge and hence a non-Euclidean Baumslag--Solitar subgroup. Since these subgroups contain an infinite distorted subgroup we obtain the implication.

\item[\fbox{$4\Rightarrow 3$}] By definition, a balanced group cannot contain a non-Euclidean Baumslag--Solitar subgroup.

\item[\fbox{$3\Rightarrow 2$}]
Assume that $\Delta_{[G_e]}$ is not linearly parametrizable for some edge $e$. Theorem \ref{thm: balanced edges and BS} implies that there exists an edge $e\in\Gamma\setminus T$ which is unbalanced in $\Delta_{[G_e]}$. Moreover, Lemma \ref{lemma: conjugacy path and conjugacy graph} ensures that there exists an unbalanced edge in $\mathcal{G}$. By lemma \ref{Lem: unbalanced group iff unbalanced edge} we obtain that $G$ must contain some non-Euclidean Baumslag--Solitar group.

\item[\fbox{$2\Rightarrow 1$}] Follows from Lemma \ref{lemma: 2 implies 1}
\end{proof}

\begin{corollary}\label{corollary: main result version 2}
Let \(\mc{G}\) be a graph of groups with hyperbolic vertices and 2-ended edge subgroups. The following are equivalent:
\begin{enumerate}
    \item $G$ is a hierarchically hyperbolic group;
    \item the conjugacy graph associated to every equivalence class of edges is linearly parametrizable;
    \item $G$ does not contain a non-Euclidean almost Baumslag--Solitar group;
    \item \(G\) is balanced;
    \item \(G\) does not contain an infinite distorted cyclic subgroup.
\end{enumerate}
\end{corollary}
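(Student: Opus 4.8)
The plan is to establish the cycle $1\Rightarrow5\Rightarrow4\Rightarrow3\Rightarrow2\Rightarrow1$, following the proof of Corollary~\ref{corollary: main result} essentially verbatim. The only genuine change is that, having dropped the virtual torsion-freeness assumption, I cannot pass to honest $BS(m,n)$ subgroups; instead I will use item~(1) of Corollary~\ref{cor: unbalanced implies almost BS}, which produces a non-Euclidean \emph{almost} Baumslag--Solitar subgroup with no torsion hypothesis, precisely matching the weakened condition~(3). Item~(2) of that corollary --- the single place where virtual torsion-freeness entered --- is thus never invoked.

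The implications $1\Rightarrow5$ and $2\Rightarrow1$ carry over unchanged: the former is \cite[Theorem~7.1]{HHSBoundaries} together with \cite[Theorem~3.1]{durham2018corrigendum}, and the latter is Lemma~\ref{lemma: 2 implies 1}, neither of which uses a torsion assumption. For $4\Rightarrow3$ (contrapositive) I would simply note that a non-Euclidean almost Baumslag--Solitar subgroup $\langle a,s\rangle\leq G$ with $sa^is^{-1}=a^j$, $|i|\neq|j|$ and $a$ of infinite order already witnesses that $G$ is unbalanced. For $3\Rightarrow2$ (contrapositive): if some conjugacy graph $\Delta_{[G_e]}$ fails to be linearly parametrizable, then since $\Delta_{[G_e]}$ is a graph of $2$-ended groups (Remark~\ref{remark: remark on conjugacy graph}), Theorem~\ref{thm: balanced edges and BS version 2} (or Corollary~\ref{coro: linearly parametr iff all edges balanced} together with Lemma~\ref{Lem: unbalanced group iff unbalanced edge}) shows that $\pi_1(\Delta_{[G_e]})$ is unbalanced; Lemma~\ref{lemma: conjugacy path and conjugacy graph} then makes $e$ unbalanced in $\mc{G}$, and item~(1) of Corollary~\ref{cor: unbalanced implies almost BS} yields a non-Euclidean almost Baumslag--Solitar subgroup of $G$, so $3$ fails.

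The remaining step, $5\Rightarrow4$, is where I expect the only real (and only mild) obstacle. Arguing the contrapositive, if $G$ is unbalanced then Lemma~\ref{Lem: unbalanced group iff unbalanced edge} gives an unbalanced edge and item~(1) of Corollary~\ref{cor: unbalanced implies almost BS} gives a subgroup $\langle a,s\rangle$ as above; I then need $\langle a\rangle$ to be distorted in $G$. The point to verify is that the distortion estimate used in the implication $5\Rightarrow3$ of Theorem~\ref{thm: balanced edges and BS} still applies here, even though $s$ need not have infinite order and $\langle a,s\rangle$ need not be an HNN extension. It does, because the relation iterates purely formally to $s^na^{i^n}s^{-n}=a^{j^n}$ for all $n\geq1$, whence $\bigl|d(1,a^{i^n})-d(1,a^{j^n})\bigr|\leq 2n|s|$; since $|i|\neq|j|$, the same elementary computation as before shows that $n\mapsto a^n$ cannot be a quasi-isometric embedding. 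This closes the cycle.
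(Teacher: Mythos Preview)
Your proposal is correct and follows essentially the same route as the paper: the same cycle $1\Rightarrow5\Rightarrow4\Rightarrow3\Rightarrow2\Rightarrow1$, with $4\Rightarrow3$ and $3\Rightarrow2$ handled exactly as in the paper via Theorem~\ref{thm: balanced edges and BS version 2}, Lemma~\ref{lemma: conjugacy path and conjugacy graph}, and item~(1) of Corollary~\ref{cor: unbalanced implies almost BS}. Your treatment of $5\Rightarrow4$ is in fact more careful than the paper's, which simply points back to Corollary~\ref{corollary: main result} (whose $5\Rightarrow4$ invoked honest Baumslag--Solitar subgroups); your direct distortion computation from the relation $sa^is^{-1}=a^j$ avoids this and is valid since the exponential growth of $(|i|/|j|)^n$ beats the linear error $2n|s|$.
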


\begin{proof}
The implications are the same as in Corollary \ref{corollary: main result}, except for $4\Rightarrow 3$ and $3\Rightarrow 2$, which we now show.

\item[\fbox{$4\Rightarrow 3$}] By definition, a balanced group cannot contain a non-Euclidean almost Baumslag--Solitar group.

\item[\fbox{$3\Rightarrow 2$}] Assume that $\Delta_{[G_e]}$ is not linearly parametrizable for some edge $e$. Since $\Delta_{[G_e]}$ is a graph of 2-ended groups (Remark \ref{remark: remark on conjugacy graph}), Theorem \ref{thm: balanced edges and BS version 2} implies that $\pi_1(\Delta_{[G_e]})$ is unbalanced. Therefore, Lemma \ref{lemma: conjugacy path and conjugacy graph} ensures that there exists an unbalanced edge in $\mathcal{G}$. By Corollary \ref{cor: unbalanced implies almost BS} we obtain that $G$ must contain some non-Euclidean almost Baumslag--Solitar group.
\end{proof}

As a consequence of this we obtain the following corollary that was included in the introduction:

\begin{corollary}\label{corol: corollary to mainT}
Let \(G = H_1 \ast_C H_2\) where \(H_i\) are hyperbolic and \(C\) is 2-ended. Then \(G\) is a hierarchically hyperbolic group. 
\end{corollary}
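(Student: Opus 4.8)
The plan is to deduce Corollary~\ref{corol: corollary to mainT} from Corollary~\ref{corollary: main result version 2} by exhibiting $G = H_1 \ast_C H_2$ as the fundamental group of a graph of groups to which that corollary applies, and then verifying the simplest equivalent condition, namely balancedness (item (4)).

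First I would set up the graph of groups: let $\mc{G}$ have underlying graph a single edge $e$ with the two endpoints labelled by $H_1$ and $H_2$, edge group $C$, and the given amalgamating monomorphisms $\phi_{e^\pm}\colon C \to H_i$. Since the $H_i$ are hyperbolic and $C$ is $2$-ended, this is a graph of hyperbolic groups with $2$-ended edge subgroups, so Corollary~\ref{corollary: main result version 2} applies and it suffices to check that $G = \pi_1(\mc{G})$ is balanced. The key point is that the underlying graph is a tree, so there are no edges outside a spanning tree; by Remark~\ref{remark: edges in tree are balanced} the (unique) edge $e$ is automatically balanced, and then Lemma~\ref{Lem: unbalanced group iff unbalanced edge} — applied with the balanced vertex groups $H_1, H_2$ (hyperbolic groups are balanced, as they contain no distorted cyclic subgroups, cf.\ Remark~\ref{remark: absence of BS in HHG}) — gives that $\pi_1(\mc{G})$ is balanced. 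Alternatively one may invoke Corollary~\ref{corollary: tree product and balance} directly, since $G$ is balanced-$2$-decomposable with tree underlying graph.

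Having verified item (4) of Corollary~\ref{corollary: main result version 2}, the equivalence $(4)\Rightarrow(1)$ there yields that $G$ is a hierarchically hyperbolic group, which is the assertion. I would also remark (matching the discussion in the introduction preceding this corollary) that this argument never uses almost-malnormality of $C$ in the $H_i$: the absence of unbalanced edges is forced purely by the tree shape of the underlying graph, which is exactly why the free-product-with-amalgamation case goes through without any malnormality hypothesis.

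I do not expect a genuine obstacle here — the statement is essentially a specialization of the section's main theorem to the tree (single-edge) case. The only mild subtlety is bookkeeping: making sure that the hypothesis "hyperbolic vertex groups, $2$-ended edge group" is literally what Corollary~\ref{corollary: main result version 2} requires (it is, after the reduction lemma dealing with finite vertices/edges, which is vacuous here since $C$ is infinite and $H_1,H_2$ are infinite as soon as $C$ is), and that "hyperbolic $\Rightarrow$ balanced" is legitimately available. If one wanted to be maximally self-contained one could cite instead the classical Bestvina--Feighn fact that $H_1 \ast_C H_2$ with $2$-ended $C$ need not be hyperbolic, but \emph{is} always hierarchically hyperbolic by the above; no such detour is needed for the proof.
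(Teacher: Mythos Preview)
Your proposal is correct and takes essentially the same approach as the paper: both verify that $G$ is balanced and then invoke Corollary~\ref{corollary: main result version 2}. The paper cites Lemma~\ref{lemma: amalgam and balance} directly for balancedness, which is precisely the single-edge instance of your alternative via Corollary~\ref{corollary: tree product and balance}; your first route through Remark~\ref{remark: edges in tree are balanced} and Lemma~\ref{Lem: unbalanced group iff unbalanced edge} is a mild repackaging of the same fact.
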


\begin{proof}
It follows from Lemma \ref{lemma: amalgam and balance} that $G$ is balanced. From the previous Corollary, we obtain the result.
\end{proof}

\bibliography{Bibliography}
\bibliographystyle{alpha}

\end{document}